\documentclass[11pt,reqno,english]{article}

\usepackage{amsmath,amsfonts,amssymb,graphicx,amsthm,url}
\usepackage[noadjust]{cite}
\usepackage{stmaryrd}
\usepackage{mathrsfs,booktabs,tabularx}
\usepackage{xifthen,xcolor,tikz,setspace}
\usetikzlibrary{decorations.pathmorphing,patterns,shapes,calc,decorations}
\usetikzlibrary{decorations.pathreplacing}
\usepackage{mathtools,upgreek,paralist}
\usepackage[final]{showkeys} 

\definecolor{refkey}{gray}{.75}
\definecolor{labelkey}{gray}{.5}
\usepackage[algo2e,boxed,vlined,algoruled]{algorithm2e}
\usepackage{comment}
\usepackage[shortlabels]{enumitem}

\setlength{\oddsidemargin}{6mm}
\setlength{\evensidemargin}{6mm} 
\setlength{\textwidth}{155mm}

\usepackage[letterpaper]{geometry}
\geometry{verbose,tmargin=1in,bmargin=1in,lmargin=1in,rmargin=1in}

\usepackage[colorinlistoftodos]{todonotes}
\presetkeys{todonotes}{inline, color=green}{}

\usepackage{accents}
\usepackage[algo2e,boxed,vlined,algoruled]{algorithm2e}

\usepackage{authblk}


\usepackage[small]{caption}
\usepackage[colorlinks=true, citecolor=blue]{hyperref}
\colorlet{DarkGreen}{green!50!black}
\colorlet{DarkGray}{gray!60!black}

\numberwithin{equation}{section}



\renewcommand{\epsilon}{\varepsilon}

\newcommand{\one}{\mathbf{1}}

\usepackage{color}
 \definecolor{refkey}{gray}{.5}
 \definecolor{labelkey}{gray}{.5}
\definecolor{light}{gray}{.9}

\usepackage{soul}


\usepackage[capitalise]{cleveref}

\newtheorem{thm}{Theorem}

\newtheorem{theorem}{Theorem}[section]
\newtheorem*{theorem*}{Theorem}
\newtheorem{lemma}[theorem]{Lemma}

\crefname{claim}{Claim}{Claims}
\newtheorem{proposition}[theorem]{Proposition}

\newtheorem{observation}[theorem]{Observation}
\newtheorem{fact}[theorem]{Fact}
\newtheorem{corollary}[theorem]{Corollary}
\newtheorem{cor}[theorem]{Corollary}

\theoremstyle{definition}{
\newtheorem{example}[theorem]{Example}
\newtheorem{definition}[theorem]{Definition}

\newtheorem*{definition*}{Definition}

\newtheorem{remark}[theorem]{Remark}
\newtheorem*{remark*}{Remark}

}

\newcommand{\E}{\mathbb E}

\renewcommand{\P}{\mathbb P}

\newcommand{\R}{\mathbb R}
\newcommand{\Z}{\mathbb Z}

\newcommand{\cC}{\ensuremath{\mathcal C}}
\newcommand{\cD}{\ensuremath{\mathcal D}}
\newcommand{\cE}{\ensuremath{\mathcal E}}
\newcommand{\cF}{\ensuremath{\mathcal F}}
\newcommand{\cG}{\ensuremath{\mathcal G}}

\newcommand{\cL}{\ensuremath{\mathcal L}}

\newcommand{\cR}{\ensuremath{\mathcal R}}
\newcommand{\cS}{\ensuremath{\mathcal S}}
\newcommand{\cT}{\ensuremath{\mathcal T}}

\newcommand{\cY}{\ensuremath{\mathcal Y}}

\newcommand{\sT}{{\ensuremath{\mathscr T}}}

 \renewcommand{\epsilon}{\varepsilon}

\DeclareMathOperator{\diam}{diam}

\newcommand{\tv}{{\textsc{tv}}}


\def\sT{{\mathsf T}}
\newcommand{\op}{\mathrm{op}}
\newcommand{\bone}{\mathbf{1}}
\newcommand{\eps}{\varepsilon}
\newcommand{\wR}{\widetilde{\mathcal{R}}}
\newcommand{\wC}{\widetilde{\mathcal{C}}}
\newcommand{\wE}{\widetilde{\mathcal{E}}}
\newcommand{\wF}{\widetilde{\mathcal{F}}}
\newcommand{\wEE}{\wE_{\vec{u},\vec{t},\vec{C}}}
\newcommand{\wFF}{\widetilde{\mathcal{F}}_{\vec{u}, \vec{t}}}
\newcommand{\wJ}{\widetilde{\mathcal{J}}}
\newcommand{\wX}{\widetilde X}

\newcommand{\betap}{\beta_{\textnormal{p}}}

\title{Rapid phase ordering for Ising and Potts dynamics \\ on random regular graphs}
\date{}

\author{Reza Gheissari\thanks{Department of Mathematics, Northwestern University. Evanston, IL. \url{gheissari@northwestern.edu}}, Allan Sly\thanks{Department of Mathematics, Princeton University. Princeton, NJ. \url{allansly@princeton.edu}}, Youngtak Sohn\thanks{Division of Applied Mathematics, Brown University. Providence, RI. \url{youngtak_sohn@brown.edu}}}

\begin{document}

\maketitle

\vspace{-1.25cm}
\begin{abstract}
    We consider the Ising, and more generally, $q$-state Potts Glauber dynamics on random $d$-regular graphs on $n$ vertices at low temperatures $\beta \gtrsim \frac{\log d}{d}$. The mixing time is exponential in $n$ due to a bottleneck between $q$ dominant phases consisting of configurations in which the majority of vertices are in the same state. We prove that for any $d\ge 7$, from biased initializations with $\epsilon_d n$ more vertices in state-$1$ than in other states, the Glauber dynamics quasi-equilibrates to the stationary distribution conditioned on having plurality in state-$1$ in optimal $O(\log n)$ time. Moreover, the requisite initial bias $\epsilon_d$ can be taken to zero as $d \to \infty$. 
    Even for the $q=2$ Ising case, where the states are naturally identified with $\pm 1$, proving such a result requires a new approach in order to control negative information spread in spacetime despite the model being in low temperature and exhibiting strong local correlations. For this purpose, we introduce a coupled non-Markovian \emph{rigid dynamics} for which a delicate temporal recursion on probability mass functions of minus spacetime cluster sizes establishes their subcriticality.  
\end{abstract}

\section{Introduction}
The out-of-equilibrium dynamics of spin systems like the Ising and Potts models are very well-studied in a variety of fields. One of their notable features is that at low-temperatures, the dynamics are slow to equilibrate due to a bottleneck between configurations that are mostly in one state, versus mostly in another. A central question is in what sense this is the ``only" obstruction to fast relaxation of the low-temperature dynamics. This question can be posed in the following form: if you initialize with a small bias towards one ground state, does the low-temperature dynamics converge rapidly to the restriction of the Gibbs distribution to the corresponding phase? 

In the mathematical physics literature, such questions have been studied numerically for a long time, with extensive predictions for the convergence of the Ising dynamics from random initializations, in which it is expected that the magnetization diffuses away from $0$ and picks one  phase to dominate, at which point quasi-equilibration within that phase is rather fast: see the famous survey of~\cite{bray1994theory} on the theory of phase ordering kinetics. The question is also relevant to theoretical computer science as, when performing low-temperature sampling with Markov Chain Monte Carlo, it is common to initialize randomly, either from a product measure, or from some high-temperature initialization (as happens in simulated annealing~\cite{Kirkpatrick-simulated-annealing}). 
The Ising and Potts models on non-trivial geometries serve as the natural baselines with which to put these predictions on a rigorous footing.

Mathematically, a major obstruction to answering these kinds of questions is the fact that many of the tools for bounding mixing times (e.g., functional inequalities like spectral gap and bottleneck sets~\cite{LP}, and the more modern tools of spectral independence~\cite{Anari-spectral-independence} and localization schemes~\cite{ChenEldan}) are useful for controlling  mixing from worst-case initialization using local decay of correlation inputs, but in contexts where the worst-case mixing time is exponentially slow due to strong local correlations, showing fast mixing from classes of ``nice" initializations can be very challenging. 

Indeed, at positive temperatures, despite the very extensive literature on mixing times for Ising Glauber dynamics in the last thirty or more years (see e.g., the survey of~\cite{Martinelli-notes}, and Chapter 15 of~\cite{LP}), the set of results of the above form can be summarized as follows. Firstly, on the complete graph, the dynamics is essentially one-dimensional, being characterized by the birth-and-death chain of the magnetization: it was shown in~\cite{LLP} that at all low-temperatures, the mixing time of continuous-time Glauber dynamics restricted to positive magnetization is $O(\log n)$. On the infinite tree,~\cite{CaMaTree} showed that from i.i.d.\ $\text{Rad}(1-\varepsilon)$ initializations, at all sufficiently low temperatures the dynamics converges locally to the plus Gibbs measure. 
For graphs with more complicated geometry, like the random $d$-regular graphs and the integer lattice, it was shown a few years ago in~\cite{GhSi22} that at low-temperatures, the quasi-equilibration to the plus phase is fast from the \emph{all}-plus initialization. This specific initialization gives access to monotonicity tools, since it stochastically dominates the stationary measure, whereas even a very biased product initialization does not. In the Potts case, apart from the complete graph case where mixing from random initializations was recently studied in~\cite{BlGhZh24}, the literature is even more limited due to the absence of monotonicity. Indeed, proving polynomial quasi-equilibration from just the identically-state-$1$ initialization on random graphs was posed as an open problem in~\cite{Fast-polymer-dynamics}. 

The one regime where there is more known as pertains to convergence times from biased, but non-monochromatic initializations on graphs with non-trivial geometry is in the zero-temperature limit of the Ising dynamics. The zero-temperature Ising Glauber dynamics is also known as the majority dynamics, or the voter model, an interacting particle system of significant interest in its own right: see e.g., the book~\cite{Liggett-book}. In that context, the analogue of rapid quasi-equilibration to the plus phase would be rapid fixation at the all-plus configuration (which is absorbing). On the lattice $\mathbb Z^d$,~\cite{FoScSi02} showed that when initialized from a sufficiently biased i.i.d.\ initialization, the zero-temperature Ising dynamics converges to the all-plus configuration. The bias was shown in~\cite{Morris-zero-temp} to be allowed to go to zero as $d\to\infty$. In the past several years, there has been a lot of activity around the same question for zero-temperature dynamics on dense random graphs, with particular focus on how small the bias can be as a function of the average degree~\cite{GNS-zero-temp-Ising-RG,Tran-Vu-majority-dynamics,Fountoulakis-majority-dynamics,SahSawhney-majority-dynamics}. Interestingly, as the random graph gets sparse ($O(1)$-average degree), near-zero magnetization initializations become delicate as the energy landscape of the Ising model on random graphs near zero-magnetization is expected to be ``glassy", and  there exist trapping configurations for the zero-temperature dynamics~\cite{Benjamini-majority-dynamics}. 

We finally mention some works that have studied mixing of low temperature dynamics when forced to stay in narrow subsets of the state space. One such chain is the Kawasaki dynamics where the Ising dynamics is forced to have a fixed magnetization, and updates are made by swapping adjacent $+1$ and $-1$ spins. While sampling from this distribution is hard on general maximum-degree $d$ graphs at low-temperatures~\cite{kuchukova_et_al:LIPIcs.APPROX/RANDOM.2024.56}, the recent paper~\cite{BaBoDa-Kawasaki} showed that for every magnetization, it mixes quickly as long as $\beta \lesssim \frac{1}{\sqrt{d}}$ (well into the low-temperature regime on random regular graphs). In another direction,~\cite{Fast-polymer-dynamics} showed that Ising and Potts dynamics are fast to mix at sufficiently low temperatures on random $d$-regular graphs if constrained to remain close to the monochromatic configuration not just in the sense of total spin count, but in the much stronger sense of never allowing connected components of the non-dominant state bigger than $c\log n$. These constrained chains can be used to sample from the Ising and Potts distributions quickly, but do not imply anything for the unconstrained Glauber dynamics because with such strong constraints, the standard Glauber chain leaves the subset of the state space more quickly than the restricted chain mixes.

In this paper, we study the low (but positive) temperature Ising and Potts Glauber dynamics on random $d$-regular graphs initialized from biased initializations and demonstrate that they quasi-equilibrate in optimal $O(\log n)$ time to the corresponding metastable distribution, i.e., the Potts distribution conditioned on having plurality in state-$1$.  

\subsection{Main results}
We begin by presenting our results for the Ising Glauber dynamics. The Ising model on a graph $G = (V,E)$ on $|V| = n$ vertices, at inverse temperature $\beta>0$ is the following probability distribution over assignments $\Omega = \{- 1, +1\}^n$: 
\begin{align}\label{eq:Ising-measure}
	\pi_{G,\beta}(\sigma) \propto \exp\Big(  \beta \sum_{v\sim w} \sigma_v \sigma_w\Big)\,.
\end{align}
The (continuous-time) Glauber dynamics $(X_t)_{t\ge 0}$ for the Ising model on $G$ is the Markov chain that is initialized from $X_0\in \Omega$, and
\begin{enumerate}
\item Assigns each vertex $v\in V$ a rate-$1$ Poisson clock;
\item If the clock at vertex $v$ rings at time $t$, then it generates $X_t$ from $X_{t^-}$ by resampling   
\begin{align}\label{eq:Ising-transition-rate}
    X_t(v) \sim \pi_{G,\beta}\big(\sigma_v\in \cdot  \mid (\sigma_{w})_{w\ne v} = (X_{t^-}(w))_{w\ne v}\big) \,,\end{align}
and leaving $X_t(w) = X_{t^-}(w)$ for all $w\ne v$. 
\end{enumerate} 
We denote the law of this Markov chain as $\mathbb P_{X_0}(X_t \in \cdot)$.

We consider the Glauber dynamics of the Ising model on graphs $G$ drawn from the uniform distribution over $d$-regular graphs on $n$ vertices,  which we denote by $G\sim \mathcal G_d(n)$. 
The Ising model on $G\sim \mathcal G_d(n)$ undergoes a phase transition at $\beta_c(d) := \tanh^{-1}(1/(d-1))$ (which goes to zero as $1/d$ as $d\to\infty$). While for high temperatures $\beta<\beta_c(d)$, the worst-case mixing time (the time to be at total-variation distance at most $1/4$ to the stationary distribution from a worst-case initialization) is an optimal $O(\log n)$~\cite{MS}, for $\beta>\beta_c(d)$, the worst-case mixing time is slow, $\exp(\Theta(n))$, because of a bottleneck between configurations with a majority plus,
\begin{align}\label{eq:plus-phase}
    \Omega^+ = \{\sigma: m(\sigma) \ge 0\}\,, \qquad \text{where} \qquad m(\sigma) = \frac{1}{n} \sum_{v\in V} \sigma_v\,,\end{align}
 and ones with a majority minus, $\Omega^-$~\cite{DeMo10}. This in particular means that if we initialize from the \emph{plus phase}, by which we mean the Gibbs distribution conditioned on being in $\Omega^+$, i.e., $\pi^+= \pi (\cdot \mid \Omega^+)$, the time for the Markov chain to hit $\Omega^-$ is exponentially long. 
 Exactly at the critical point $\beta_c(d)$, polynomial mixing time was recently shown by combining~\cite{BaDa-LSI} with~\cite{Weitz} (see Example 6.19 of~\cite{BaBoDa-notes}).

Our first main theorem is that at all sufficiently low temperatures, the Ising Glauber dynamics initialized from a configuration $X_0$ with a bias to the plus phase, i.e., $m(X_0) \ge \epsilon$, (quasi-)equilibrates to $\pi^+$ in $O(n\log n)$ time steps. When we write $o_d(1), \asymp_d$, we mean as $d\to\infty$, while we use little-$o$, big-$O$ notations without subscripts to mean as $n\to\infty$.

\begin{thm}
    \label{thm:main}
    For every $d\ge 7$, there exists constants  $C(\epsilon, \beta, d)>0$, $\epsilon_0(d)\in (0, 1)$ and $\beta_0<\infty$ with $\epsilon_{0}(d) \asymp_d \frac{1}{\log d}$ and $\beta_0 \asymp_d \frac{\log d}{d}$, such that for every $\epsilon\in (\epsilon_0, 1]$ and every $\beta>\beta_0$,  if $G \sim \mathcal G_{d}(n)$, the following holds with  probability $1-o(1)$. If $X_0$ has $m(X_0) \ge \epsilon$, then 
    \begin{align*}
        \|\mathbb P_{X_0}(X_t \in \cdot) - \pi_G(\,\cdot \mid \Omega^+)\|_\tv \le n^{-10}\,, \qquad \text{for all} \qquad \text{$C \log n \le t\le e^{n/C}$}\,.
    \end{align*}
    By symmetry, we have the same bound on the distance to $\pi_G^- = \pi_G(\cdot \mid \Omega^-)$ if $X_0$ has $m(X_0)\le - \epsilon$. 
\end{thm}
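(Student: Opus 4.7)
The plan is to show that, starting from any $X_0$ with $m(X_0)\ge \epsilon$, after a burn-in of $O(\log n)$ continuous time the configuration $X_t$ agrees, outside of a sparse collection of small localized minus regions in spacetime, with a coupled all-plus-initialized Glauber dynamics $X_t^+$. Since $X_t^+$ is known (by the Gheissari--Sly~2022 result discussed above) to quasi-equilibrate to $\pi^+$ in $O(\log n)$ time, controlling this spacetime disagreement on balls of radius $O(\log n)$ in the logarithmic-girth graph $G\sim \mathcal G_d(n)$ delivers the stated total variation bound.

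The main engine is a \emph{rigid spacetime percolation} construction. Realize the Glauber dynamics through a graphical representation with rate-$1$ Poisson ring times and i.i.d.\ $\mathrm{Unif}[0,1]$ variables $U_{v,k}$ at each ring. Define a coupled \emph{rigid dynamics} $Y_t$ by declaring that at each clock ring the site $v$ is tentatively set to $+1$ and then frozen---its subsequent rings suppressed---unless its current local environment together with $U_{v,k}$ forces $Y_t(v)=-1$, in which case $v$ remains dynamic. This yields a non-Markovian coupling under which the minus set of $X_t$ is contained in the minus set of $Y_t$ grown from the initial minus spins of $X_0$, so it suffices to show that the latter has total spacetime volume $o(n\log n)$ with probability $1-n^{-10}$.

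For this, introduce the \emph{minus spacetime clusters} of $Y_t$: the connected components in $V\times [0,C\log n]$ of points $(v,t)$ with $Y_t(v)=-1$, joined across graph edges at common times and across consecutive rings at common vertices. The central claim is subcriticality, $\P(|\mathcal C(v,0)|\ge k)\le e^{-ck}$, proved by a temporal induction on the probability mass function of cluster sizes. At each clock ring on the cluster boundary the probability of propagating to an unrevealed vertex is bounded by the Ising flip probability $\approx e^{-2\beta(d-2k)}$, where $k$ counts minus neighbors; in the regime $d\ge 7$, $\beta\gtrsim \log d/d$, the branching parameter $(d-1)e^{-\Omega(\beta d)}$ is strictly below one, giving subcriticality. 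Running the recursion on pmfs rather than on means is necessary both to handle long thin time-like segments where the cluster persists at a single vertex without branching outward, and to capture the non-independent way in which multiple minus neighbors at one vertex amplify its flip probability.

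The hardest step will be closing the recursion \emph{temporally}: by a single forward-in-spacetime exploration whose filtration, at each stage, leaves the unrevealed $U_{v,k}$ i.i.d.\ $\mathrm{Unif}[0,1]$. The subtlety is that the minus-update rule at a cluster vertex $v$ depends on the states of all neighbors $w\sim v$, some rigidly plus, whose status encodes information about their own past and future clock rings and can in principle bias the cluster's growth elsewhere. My proposal is to arrange the exploration so that rigidly-plus boundary vertices contribute only a worst-case upper bound on flip probabilities into the cluster, and to handle the initial minus spins of $X_0$ by treating them as additional cluster roots at time $0$ with density at most $(1-\epsilon)/2 < 1/2$. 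Once subcriticality in this form yields $(1-o(1))n$ vertices rigidly plus throughout $[C\log n/2,\, C\log n]$ with probability $1-n^{-10}$, comparison with $X_t^+$ and the Gheissari--Sly bound on its mixing to $\pi^+$ closes the argument.
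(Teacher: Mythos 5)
Your proposal captures the broad architecture that the paper uses (spacetime clusters, a coupled non-Markovian ``rigid'' chain, a temporal pmf recursion for subcriticality, and comparison with the all-plus chain), but three of the load-bearing constructions are either reversed, missing, or would not close the argument as described.

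\textbf{The rigid dynamics is described in the wrong direction.} You propose to freeze a vertex at $+1$ once it updates to $+1$ and to suppress its subsequent rings; this makes $Y_t$ \emph{more plus} than the true dynamics, so the claimed inclusion ``minus set of $X_t$ $\subseteq$ minus set of $Y_t$'' would fail --- the true chain can flip a long-dormant $+1$ vertex back to $-1$, while your $Y_t$ cannot. What is actually needed (and what the paper does) is a chain that is \emph{more minus} than $X_t^+$: the rigid dynamics of Definition~\ref{def:rigid-dynamics} rejects a $-1\to+1$ flip precisely when $v$ is a \emph{trifurcation point} of its minus region, so $\widetilde X_t\le X_t^+$ pointwise, and --- crucially --- Lemma~\ref{lem:size:trifurcation} and Observations~\ref{obs:minus-four-neighbors-tripoint}--\ref{obs:minus-four-neighbors-outside-part} guarantee that under this restriction, no vertex ever sees more than $3$ minus neighbors in a single small spacetime cluster, which is exactly what makes the flip-rate bound $e^{-2\beta d/7}$ usable for $d\ge 7$. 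Your proposal contains no analogue of trifurcation points, and your worst-case bound on ``rigidly-plus boundary vertices'' is exactly the step the paper flags as too lossy (this is the reason the rigid dynamics is needed: to avoid holes that are almost-surely flipping back to minus).

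\textbf{Treating the initial minuses as cluster roots at density $(1-\epsilon)/2<1/2$ and invoking subcriticality does not work.} For $\epsilon$ near $0$, the initial minus set is supercritical on $G$ and forms a single giant component; a branching-process-type estimate on spacetime clusters grown from individual roots cannot rule out a macroscopic legacy that persists. Indeed, the paper explicitly warns that no such result holds for arbitrary $\epsilon>0$ (near-friendly bisections trap the $\beta\to\infty$ dynamics), and the achieved threshold is only $\epsilon_0(d)\asymp 1/\log d$. The paper's route is entirely different here: it decouples two problems. (i) Proposition~\ref{prop:exp-tails-minus-regions} proves exponential tails only for minus spacetime clusters of the \emph{all-plus initialized} rigid chain, via the conditional pmf recursion of Proposition~\ref{prop:exp-tail-minus-regions-conditional} (the conditional form, where one conditions on finitely many disjoint spacetime clusters $\wEE$, is necessary --- see Remark~\ref{rmk:proof-conditional} --- and is absent from your sketch). (ii) Section~\ref{sec:legacy-analysis} then controls the possibly-macroscopic \emph{legacy region} $\cL_t$ of $X_t^{x_0}$ by a drift estimate on $|\cL_t|$ that relies on expansion of $G$ (Lemmas~\ref{lem:expansion-edgenumber-facts:alternate} and~\ref{lem:expansion-edgenumber-facts-configuration-model}), using Corollary~\ref{cor:domination-by-i.i.d.-Psi} to dominate the regions it absorbs by those of the rigid chain. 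Finally, Section~\ref{sec:magnetization-analysis} is a separate magnetization-drift argument that bootstraps from the requisite bias $1-\gamma_0$ down to $\epsilon_0\asymp 1/\log d$; your sketch has no analogue of this step and thus cannot reach the stated $\epsilon_0$.

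\textbf{The final TV step.} Showing that disagreement occupies spacetime volume $o(n\log n)$ with probability $1-n^{-10}$ is not sufficient; you need $\cL_{C\log n}=\emptyset$ with probability $1-O(n^{-10})$, which then gives a perfect coupling of $X_t^{x_0}$ with $X_t^+$ (Lemma~\ref{lem:disagreement-subset-of-legacy} in the Potts setting), followed by a comparison with the $\Omega^+$-restricted chain and the bottleneck estimate~\eqref{eq:stationary-magnetization-bottleneck} to conclude closeness to $\pi^+$ up to time $e^{n/C}$. Appealing directly to~\cite{GhSi22} for $X_t^+$ is a reasonable shortcut in spirit, but it does not by itself address the exponential-time upper cutoff in the theorem.
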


Let us make some comments on the different parameter requirements in the theorem. Firstly, we expect the analogous theorem to hold for any $d\ge 3$, but the condition $d\ge 7$ is a barrier to our proof method, and getting down to $d\ge 3$ would require some new ideas to more carefully handle short cycles in $G$ that are all-minus by chance. We leave this to future investigation. 

In the other direction, we can say the following about the large-$d$ asymptotics of  $\varepsilon_0,\beta_0$ and $C$. The requirement $\beta_0 \gtrsim_d \frac{\log d}{d}$ is the threshold below which under $\pi^+$, the set of minus spins, has a giant component, preventing use of the locally treelike geometry to confine minus regions of the stationary distribution. 
The rate of decay of the minimal bias, $\varepsilon_0 \asymp_d \frac{1}{\log d}$, can be improved to $\epsilon_0 \asymp_d \frac{1}{\sqrt{d}}$ at the expense of $\beta_0 \asymp_d \frac{1}{\sqrt{d}}$, but we note that for any fixed $d$, for sufficiently small $\varepsilon$, there may exist configurations with magnetizations near $\varepsilon$ that in the $\beta\to\infty$ limit trap the dynamics; in the combinatorics literature, these are known as (near-)friendly bisections (see e.g.~\cite{Ferber-friendly}). Therefore, the treatment at very small $\epsilon$ would need to subtly address the ability of positive temperature dynamics to escape even though zero-temperature dynamics may not. The constant $C$ in the $C\log n$ time for quasi-equilibration can be taken to be $1+o_d(1)$ as $d$ gets large.  

Finally, the exponential upper bound on $t$ is necessary because after exponential time, the Glauber dynamics fully mixes and therefore gives probability $1/2$ to $\Omega^-$ while $\pi^+_G(\Omega^-) =0$. 

\subsubsection{The Potts Glauber dynamics case}
Let us now describe our results for the Potts model. 
The $q$-state Potts model on a graph $G = (V,E)$ on $|V| = n$ vertices, at inverse temperature $\betap>0$ is the following probability distribution over assignments $\Omega = \{1,...,q\}^n$: 
\begin{align}\label{eq:Potts-measure}
	\pi_{G,\betap,q}(\sigma) \propto \exp\Big(  \betap \sum_{v\sim w} \mathbf 1\{\sigma_v = \sigma_w\}\Big)\,.
\end{align}
The $q=2$ case is naturally associated to the Ising model up to the transformation $\betap = 2\beta$, and the mapping of state-$1$ to $+1$ and state-$2$ to $-1$. 

The Potts Glauber dynamics $(Y_t)_{t\ge 0}$ are defined exactly as in the Ising dynamics, except in the resampling step~\eqref{eq:Ising-transition-rate}, it is now with the Potts distribution $\pi_{G,\betap,q}(\sigma_v \in \cdot \mid (\sigma_w)_{w\ne v} = (Y_{t^-}(w))_{w\ne v})$. Like the Ising dynamics, the Potts Glauber dynamics undergoes a slowdown at a tree uniqueness point $\beta_u(q,d)\asymp_d\frac{1}{d}$ due to bottlenecks between $q$ distinct phases corresponding to configurations where one of the $q$ states dominates. In fact, when $q\ge 3$, there is an intermediate regime of temperatures $\beta_u(d)<\betap < \beta_s(d)$ with $\beta_s(d)\asymp_d \frac{1}{d}$ where an additional disordered phase is metastable along with the $q$ ordered ones (see e.g.~\cite{CGGRSV-Metastability-Potts-ferromagnet}). 

The ordered phase where state-$1$ dominates can be described as follows using a generalized notion of magnetization (which matches~\eqref{eq:Potts-measure} when $q=2$): 
\begin{align}\label{eq:Potts-phase-magnetization}
    \Omega^1 = \{\sigma: m(\sigma)\ge 0\}\,, \qquad \text{where} \qquad m(\sigma) = \frac{1}{n} \Big( \sum_{v\in V} \mathbf{1} \{\sigma_v = 1\} - \max_{i\in \{2,...,q\}} \sum_{v\in V} \mathbf 1\{\sigma_v = i\}\Big)\,,
\end{align}
i.e., the plurality of spins are in state-$1$. We then define the conditional distribution $\pi^1 = \pi(\cdot \mid \Omega^1)$ where state-$1$ dominates. 
Theorem~\ref{thm:main} is the $q=2$ case of the following more general theorem on phase ordering of low-temperature $q$-state Potts Glauber dynamics. 

\begin{thm}\label{thm:main-Potts}
    For all $q\ge 2, d\ge 7$, there exists $\beta_0 \asymp_d \frac{\log (qd)}{d}$ and $\epsilon_0(\betap, d)  \asymp_d \max\{ \frac{1}{\sqrt{d}}, \frac{1}{\betap d}\}<~1$ such that the following holds for all $\betap>\beta_0$ and $\epsilon >\epsilon_0$. There exists $C(\epsilon,\betap,d,q)$ such that if $G\sim \mathcal G_d(n)$, with probability $1-o(1)$, for every initialization $Y_0$ with $m(Y_0) \ge \epsilon$, 
    \begin{align*}
        \|\mathbb P_{Y_0} (Y_t \in \cdot ) - \pi_G(\, \cdot \mid \Omega^1)\|_{\textsc{tv}} \le n^{-10}\,, \qquad \text{for all} \qquad \text{$C \log n \le t \le e^{ n/C}$\,.}
    \end{align*}
\end{thm}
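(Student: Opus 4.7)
The plan is to reduce \cref{thm:main-Potts} to a spacetime percolation bound on the set of non-$1$ spins of the Potts Glauber dynamics, via a coupling to a more tractable auxiliary process whose cluster growth can be controlled by a temporal recursion on probability mass functions.

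\textbf{Coupled rigid dynamics.} The first step is to introduce a non-Markovian \emph{rigid dynamics} $(\widetilde Y_t)_{t\ge 0}$, driven by the same Poisson clocks and update uniforms as $(Y_t)_{t\ge 0}$, designed so that at every update the probability that a vertex becomes non-$1$ is dominated by the worst-case single-site Potts transition probability over all possible neighborhood configurations. At $\betap\gtrsim \log(qd)/d$ this worst-case probability is at most roughly $qe^{-\betap d/2}$, so the dominating chain is still very sparse. The coupling is arranged to maintain $\{v:Y_t(v)\neq 1\}\subseteq \{v:\widetilde Y_t(v)\neq 1\}$ for every $t$, so it suffices to analyze the non-$1$ set of $\widetilde Y$.

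\textbf{Spacetime clusters and temporal recursion.} Define the \emph{minus spacetime cluster} of a vertex $v$ by time $T$ as the connected component of $(v,T)$ in the spacetime graph on $V\times[0,T]$, where $(u,s)$ and $(u,s')$ are joined whenever $\widetilde Y_r(u)\neq 1$ for all $r\in[s,s']$, and $(u,s)$ and $(w,s)$ are joined whenever $u\sim w$ in $G$ and both are non-$1$ at time $s$. The heart of the argument is a temporal recursion: along a dyadic sequence of horizons $t_k$, I would let $f_k(s)$ be an upper bound on the PMF of the total volume of a minus cluster rooted at a given vertex through time $t_k$, and set up a convolution recursion where $f_{k+1}$ is expressed in terms of $f_k$ by composing (i) spatial branching along the $d$ locally treelike neighbors, (ii) spontaneous non-$1$ creations at rate $\lesssim e^{-\betap d}$, and (iii) temporal persistence of each branch. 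For $d\ge 7$ and $\betap$ large enough, the composite branching factor is strictly less than one, and the recursion preserves exponential tails of $f_k$ uniformly in $k$, giving subcriticality up to any time $t\le e^{n/C}$.

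\textbf{From subcriticality to TV mixing.} Exponential tails on the spacetime cluster sizes imply that at every $t\le e^{n/C}$ the number of non-$1$ vertices of $\widetilde Y_t$, hence of $Y_t$, is a small fraction of $n$, which in particular keeps $m(Y_t)$ bounded away from $0$ and confines $Y_t$ to $\Omega^1$. To upgrade to the total-variation statement, I would run a stationary copy $(\widehat Y_t)$ started from $\pi_G(\cdot\mid\Omega^1)$ under the same grand coupling and apply the cluster recursion to the \emph{disagreement} spacetime clusters of $(Y_t,\widehat Y_t)$. Since the initial disagreement density is at most $1-\epsilon$, the same machinery shows that every vertex-rooted disagreement cluster is extinguished within $C\log n$ time with probability at least $1-n^{-11}$, yielding the stated $n^{-10}$ TV bound after a union bound over vertices.

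\textbf{Main obstacle.} The delicate step is the convolution recursion for the rigid dynamics. Unlike a branching process or standard first-passage percolation, minus-cluster growth has genuine feedback between temporal survival and spatial propagation at coinciding update times, and the rigid coupling is engineered precisely to break enough of this feedback to enable a clean recursion on PMFs. Maintaining subcriticality through all scales requires sharp quantitative bounds on convolutions of sub-exponentially decaying PMFs, and the lower bound $d\ge 7$ represents the threshold at which the composite branching rate can be kept strictly below one after spontaneous creations, spatial branching, and temporal persistence are all accounted for; the extension from $q=2$ to general $q$ costs only a factor of $q$ in the rigid creation probability and is absorbed into the choice of $\beta_0\asymp_d \log(qd)/d$.
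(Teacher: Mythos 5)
Your proposal contains a genuine gap at its very first step, and that gap propagates through everything downstream. You claim that the ``worst-case single-site Potts transition probability to a non-$1$ state over all possible neighborhood configurations'' is at most roughly $qe^{-\betap d/2}$ once $\betap\gtrsim \log(qd)/d$. That is false: if all $d$ neighbors of $v$ are in state $2$, then the conditional probability of $v$ updating to a non-$1$ state is $1-\frac{1}{1+e^{\betap d}+(q-2)}\approx 1$, not exponentially small. The exponential smallness only holds when the neighborhood is mostly state-$1$ (say, at least $4d/7$ neighbors in state $1$, as in the paper's Lemma~\ref{lem:potts-domination}). Consequently the ``rigid dynamics'' you propose cannot simultaneously (i) dominate the Potts non-$1$ set and (ii) have uniformly tiny non-$1$ creation probability; any process that dominates must itself take non-$1$ values with probability near~$1$ in bad neighborhoods. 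Your PMF recursion is built on the assumption of a uniformly small creation rate, so subcriticality does not follow. This is precisely the obstruction the paper identifies: conditional on many neighbors of $v$ being non-$1$ from the same cluster, $v$ is very likely to be recruited, so a naive drift or branching recursion on cluster sizes has a positive contribution that does not decay.

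The paper's solution to this has no counterpart in your sketch: it introduces a \emph{two-spin} dominating dynamics $(X_t)$ (Example~\ref{ex:Potts}) whose $p_+$ is only guaranteed large when at least $4d/7$ neighbors are $+1$, and then a further modified \emph{rigid dynamics} $(\widetilde X_t)$ in which vertices that are \emph{trifurcation points} of their minus spacetime cluster refuse to flip back to $+1$. The trifurcation rule ensures (Observations~\ref{obs:minus-four-neighbors-tripoint} and~\ref{obs:minus-four-neighbors-outside-part}) that while clusters are $O(\log n)$-sized, no vertex has more than three minus neighbors in a single cluster, so that $d\geq 7$ suffices to keep a majority $+1$ unless \emph{two independently formed} minus clusters are present, which is rare enough to control. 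The temporal recursion is then run on the rigid dynamics' clusters, and crucially in a conditional form (Proposition~\ref{prop:exp-tail-minus-regions-conditional}) precisely because merging clusters must be controlled conditional on the realization of disjoint spacetime clusters. Your dyadic ``convolution recursion'' elides this conditional structure, which is where the work lies (see Remark~\ref{rmk:proof-conditional}).

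Two further gaps. First, you go from subcriticality to TV mixing by coupling to a stationary copy and running your cluster recursion on disagreement clusters; but the paper instead analyzes a \emph{legacy} spacetime cluster of the two-spin dynamics (Definition~\ref{def:region-evolution}), proves a macroscopic drift argument on its size using global expansion properties of the random graph (Lemmas~\ref{lem:expansion-edgenumber-facts:alternate} and~\ref{lem:expansion-edgenumber-facts-configuration-model}), and dominates the Potts disagreement set $\cD_t$ by this legacy region via Lemma~\ref{lem:disagreement-subset-of-legacy}; disagreement clusters between two biased Potts chains are not themselves subcritical because the initial density of disagreements is not small. Second, the claimed dependence $\epsilon_0\asymp\max\{d^{-1/2},(\betap d)^{-1}\}$ requires a separate magnetization drift argument (Section~\ref{sec:magnetization-analysis}, Lemma~\ref{lem:from-d-dependent-initialization}) that boosts a small initial bias to $1-\gamma_0$ in $O(1)$ time before the legacy machinery can be invoked; your sketch does not address this.
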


By symmetry, the same holds for quasi-equilibration to $\pi^i$ for $i\in \{2,...,q\}$ if the Potts dynamics is initialized with $\epsilon n$ more vertices in state-$i$ than in any other state.  

\begin{figure}
\begin{tikzpicture}
    \node at (-4.25,0) {\includegraphics[width = 0.47\textwidth]{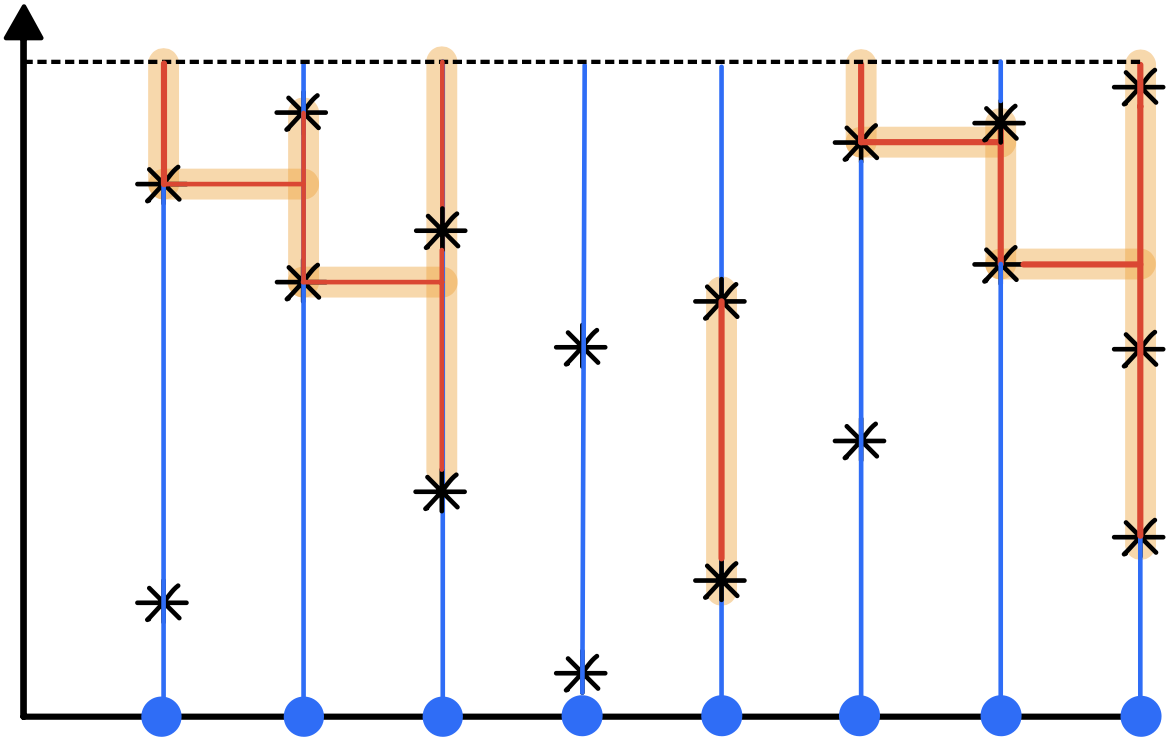}};
    \node at (4.25,0) {\includegraphics[width = 0.47\textwidth]{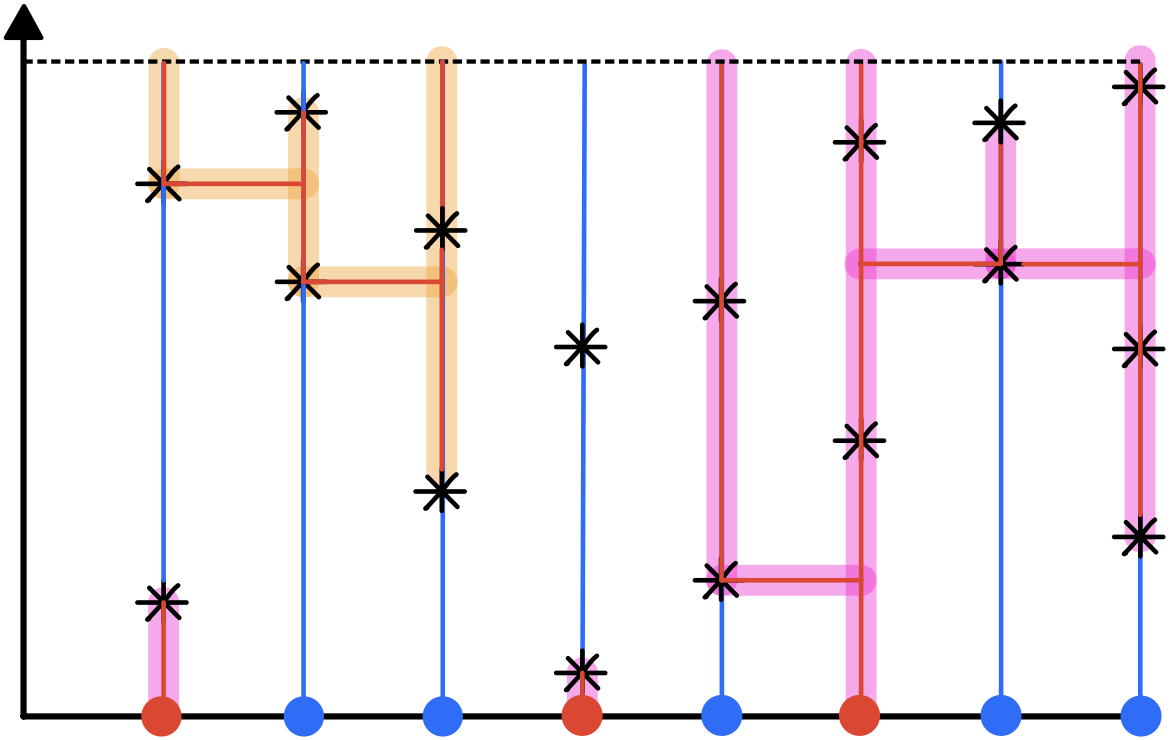}};
    \node at (-4.25, -2.7) {$G$}; 
    \node at (-7.7, 2.35) {$t$};
    \node at (-4.25, 2.5) {$(X_t^+)_{t\ge 0}$}; 
    \node at (4.25, -2.7) {$G$}; 
    \node at (8.5-7.7, 2.35) {$t$};
    \node at (4.25, 2.5) {$(X_t^{x_0})_{t\ge 0}$}; 
\end{tikzpicture}
\caption{Spacetime diagrams for coupled Ising dynamics initialized from all-plus (left) and biased initialization $x_0$ (right). On the left, the minus spacetime clusters of $(X_t^+)_{t\ge 0}$ are highlighted orange; on the right the legacy spacetime cluster (union of minus clusters intersecting initial minus sites) is highlighted in magenta.}\label{fig:different-initialization-minus-clusters}
\end{figure}

\subsection{Proof ideas}
At a high level, the main challenge of establishing fast mixing of low-temperature dynamics when restricted to a metastable phase, is that non-dominant states retain a strong local influence. This causes approaches based on ``high-temperature" tools that entail worst-case analysis (e.g., spatial-to-temporal mixing implications, path coupling, information percolation and spectral independence) to break down even in the  restricted state space of $\{m(\sigma)\ge 1-\gamma\}$. We present our proof sketch in the case of the Ising dynamics, as most of the mathematical novelty of the paper is already needed in for proving that special case; we discuss the proof of the Potts case at the end.  

\subsubsection{Evolution of the legacy region in spacetime}
The starting point of our approach, is to directly bound the coupling time of the chain $X_{t}^{x_0}$ initialized from $x_0 \in \{\sigma: m(\sigma) \ge 1-\gamma\}$ for $\gamma>0$ sufficiently small (but independent of $\beta$), to the chain $X_t^+$ started from all-plus. This implies coupling to a quasi-stationary chain started from $\pi^+$ by a triangle inequality. 
We do so by studying their evolutions via spacetime diagrams under the standard grand coupling of Glauber dynamics chains, wherein $X_t^+(v) \ge X_t^{x_0}(v)$ for all $v\in V$ and all $t\ge 0$. In the spacetime diagram, an element of $\{-1,+1\}^{G\times [0,t]}$,  has \emph{minus spacetime clusters} $\cC_{u, \le t}$, which are minus connected components in $G\times [0,t]$ containing the spacetime point $(u,t)$. Since the full spacetime realization of $X_t^+$ is more plus than that of $X_t^{x_0}$, and these are Markov processes, it can be shown that as soon as all minus clusters containing the initial minus set $\{v: x_0(v) = -1\}$ are extinct, from that time forth, $X_t^{x_0}$ and $X_t^+$ are coupled perfectly. In this sense, minus spacetime clusters confine the negative information interior to them; in particular, the following spacetime subset confines all negative information contained in the initialization $x_0$.  

\begin{definition}
     The \emph{legacy cluster} $\cL_{\le t}$ in $(X_t^{x_0})_{t\ge 0}$ is the union of all minus spacetime clusters incident to $\mathcal S_0 =\{v: x_0(v) = -1\}$, and its intersection with time-$t$ is denoted $\cL_t$.     
\end{definition}

\noindent Figure~\ref{fig:different-initialization-minus-clusters} depicts spacetime realizations, minus spacetime clusters, and the legacy cluster.

With this setup, the aim of the paper is to understand the evolution of minus spacetime clusters and establish that $|\cL_t|$ has negative drift so that with high probability $\cL_{C \log n} = \emptyset$. Heuristically, it is natural to view this as a low-temperature analogue of the information percolation approach to coupling high-temperature Ising chains pioneered in~\cite{LS-information-percolation}, with plus sites in the spacetime diagram playing the role of oblivious updates that shield information. 

Towards bounding the extinction time of the legacy region $\cL_t$, consider how it evolves in time. Roughly, if the vertex $v$ is being updated at time $t$ in configuration $X_{t}^{x_0}$ with legacy region $\cL_t$, 
\begin{enumerate}
    \item If $v\in \cL_{t}$, it can flip from $-1$ to $+1$ and if it does so, $\cL_t = \cL_{t^-} \setminus \{v\}$. Note that $X_t^+(v)=+1$ by monotonicity, so the disagreement between $X_t^{x_0}$ and $X_t^+$ is removed at $v$. 
    \item If $v$ is at distance at least two from $\cL_{t^-}$, then $\cL_t$ does not change from a flip at $v$.
    \item If $v$ is adjacent to $\cL_{t^-}$, then if $v$ flips, $\cL_t$ grows by absorbing (the time-$t$ slice of) the minus spacetime clusters containing $(w_i,t)$ for $w_i$ adjacent to $v$. 
\end{enumerate}
See Figure~\ref{fig:evolution-of-the-legacy} for a depiction of the evolution of the legacy spacetime cluster over time through a flip of the form of item (3) above. 
Notice that when $\cL_t$ grows by absorbing the minus spacetime clusters adjacent to $v$, it is adding more than just connected sets of minuses in the time-$t$ configuration: it also may add other minus sites that are connected through their past and therefore share negative information with $(w_i,t)$. 

\begin{figure}
\begin{tikzpicture}
    \node at (-4.25,0) {\includegraphics[width = 0.47\textwidth]{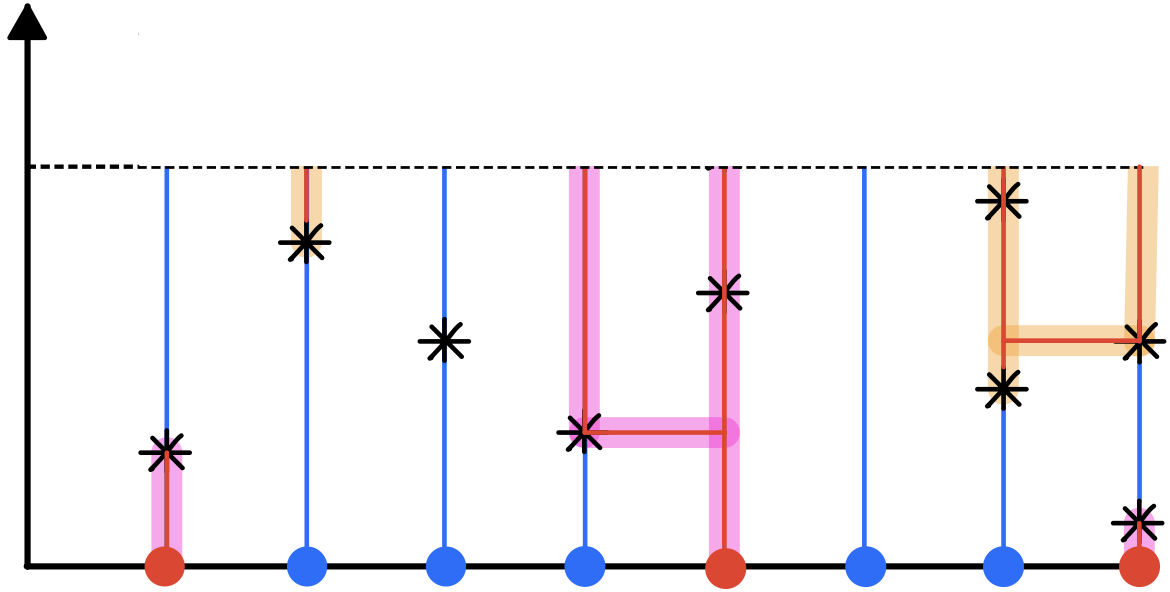}};
    \node at (4.25,0) {\includegraphics[width = 0.47\textwidth]{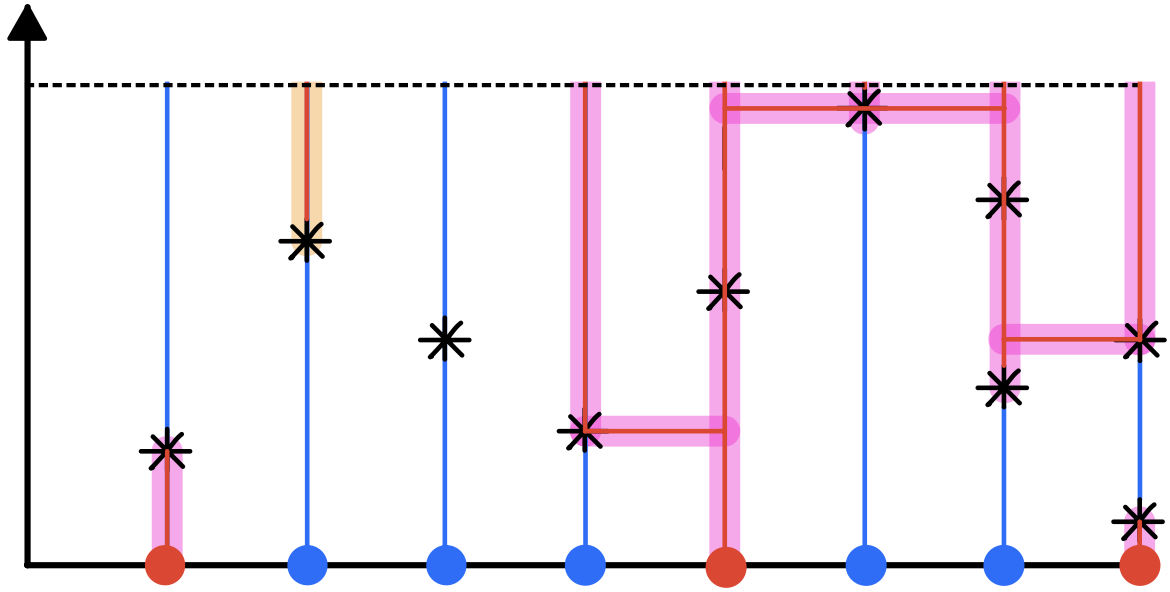}};
        \node at (-4.25, -2.25) {$G$}; 
    \node at (-7.7, 1.8) {$t$};
    \node at (-4.25, 1.8) {$X_t^{x_0}$}; 
    \node at (4.25, -2.25) {$G$}; 
    \node at (8.5-7.7, 1.8) {$t$};
    \node at (4.25, 1.8) {$X_{t+\epsilon}^{x_0}$}; 
    \node at (6.1, -2.15) {$v$}; 
\end{tikzpicture}
\caption{Snapshots of an Ising dynamics spacetime diagram initialized from biased initialization $x_0$ at times $t$ (left) and $t+\epsilon$ (right) demonstrating the evolution of the legacy cluster $\cL_{\le t}$ (highlighted purple). When the vertex $v$ adjacent to $\cL_{t}$ flips from $+1$ to $-1$ at time $s\in [t,t+\epsilon)$, the legacy cluster absorbs that vertex along with all minus spacetime clusters that are adjacent to $v$ at time $s$.}\label{fig:evolution-of-the-legacy}
\end{figure}

In order to establish negative drift for $|\cL_t|$, we then need to show that the expected growth due to flips of type (3) loses to the negative drift from type (1) flips. At this stage, the fact that minus regions confine the negative information interior to them is essential to bound the size distributions of minus regions distinct from $\cL_{\le t}$, conditional on $\cL_{\le t}$. Using this, and the fact that all minus vertices of the initialization belong to $\cL_{\le t}$, we show that conditional on $\cL_{\le t}$, the law of a minus spacetime cluster $\cC_{w_i,\le t}$ distinct from $\cL_{\le t}$ is stochastically dominated by the unconditional law of a minus spacetime region in the Glauber dynamics $X_t^+$ with all-plus initialization.

\subsubsection{Exponential tails on minus spacetime regions in the all-plus chain}

Given the above context, the bulk of the work of the paper is to establish that (at least for polynomial timescales) minus spacetime regions have exponential tails on their sizes (say, the number of vertices ever to be in a cluster, or in its intersection with time-$t$) under $(X_t^+)_{t\ge 0}$. We emphasize that unlike minus connected components at a fixed time, which can be shown to have exponential tails by expansion properties of the graph, the strong temporal correlations pose a significant challenge that cannot be dealt with in that way. 

A first attempt to establish exponential tails on minus spacetime clusters $\cC_{u,\le t}$ would go by showing that they have negative drift using a similar drift analysis to what was sketched for the legacy cluster's evolution, together with the fact that an inductively assumed exponential tail bound can be used on the disjoint minus spacetime clusters through which $\cC_{u,\le t}$ can grow. However, this approach hits a barrier due to the possibility that a vertex $u$ is $+1$ and therefore has $\cC_{u,\le t} = \emptyset$, but is very likely to flip to minus because a majority of its neighbors are from the same minus spacetime cluster. Indeed the influx of mass to non-empty minus clusters at $u$ from scenarios like this would exceed the negative drift of a non-empty minus spacetime cluster. 

The solution to this problem of ``holes" in minus clusters that are likely to flip back to minus is the novel introduction of a \emph{rigid dynamics} which is a coupled non-Markovian variant of the standard Glauber dynamics that does not allow flips at vertex $v$ from $-1$ to $+1$ if the vertex $v$ is what we call a \emph{trifurcation point}, which means it has three or more subtrees in its local neighborhood that intersect its minus spacetime cluster.

The benefit of the rigid dynamics is that it is more minus than the standard Glauber dynamics, and one can show that while minus spacetime clusters are $O(\log n)$ sized, no vertex in the rigid dynamics has more than three minus neighbors from the same minus spacetime cluster. Therefore if $d\ge 7$, it would require two \emph{distinct} minus spacetime clusters to have formed independently for it to have a majority of $-1$ neighbors and be likely to flip to minus at $\beta$ large. 

The core of the proof then becomes utilizing these properties and obtaining a bounding system of differential equations for the time evolutions of the probability mass function $\rho_t(k)  = \mathbb P(|\cC_{u,\le t}|=k)$ of the size of a minus spacetime cluster under the rigid dynamics. Roughly speaking, the differential equations will give negative drift $\frac{d}{dt} \rho_t(k) <0$ for  $k\le \frac{1}{4} \log_d n$ if $\rho_t(k)$ gets close to an exponentially decaying threshold $\psi_k = d^{-Ck}$. However, in implementing this there are several more delicate steps, in particular the fact that this probability mass function bound has to be attained conditional on realizations of other disjoint minus spacetime regions. We will comment more on these details and why they are necessary within the relevant proof segments (see Remark~\ref{rmk:proof-conditional}).

\subsubsection{Extension to the Potts model}
In order to apply the above strategy to the Potts dynamics, we consider a two-spin  dynamics $(X_t)_{t\ge 0}$ which is $-1$ whenever a coupled Potts Glauber dynamics $(Y_t)_{t\ge 0}$ is not in its dominant state-$1$. 
We establish a coupling of Potts Glauber dynamics chains $(Y_t^{y_0})_{t\ge 0}, (Y_t^1)_{t\ge 0}$ with the two-spin dynamics $(X_t^{x_0})_{t \ge 0}$ initialized from $x_0= x_0(y_0)$ which is $+1$ if and only if $y_0$ is $1$, such that when the legacy cluster of $(X_t^{x_0})_{t \ge 0}$ goes extinct, the Potts Glauber chains have successfully~coupled. 

The two-spin process can be defined in such a way that its update probabilities are monotone, and if a vertex $v$ being updated has a strict majority $+1$ neighbors, then it has a sufficiently high probability of taking state $+1$. Notably, these two properties were all we needed about the Ising dynamics to establish $O(\log n)$ extinction time of the legacy region in the argument sketched above. 

\begin{remark}
    The minimal assumptions on the family of low-temperature two-spin dynamics for which we establish $O(\log n)$-time extinction of the legacy minus cluster from initializations sufficiently biased towards plus makes our framework promising for application to other ferromagnetic models. For example, it is directly applicable to the \emph{noisy majority model} for small enough noise $\eps>0$ (see Example~\ref{ex:noisy-majority}). The dynamics of the noisy majority model are known to be challenging to study due to its non-reversibility and lack of explicit stationary distribution.  
\end{remark}

\subsubsection{Outline}
In Section~\ref{sec:minus-regions}, we formally define the minus spacetime clusters, the notion of trifurcation points and rigid dynamics, and outline properties of these combinatorial constructions. In Section~\ref{sec:exp-tails-minus-spacetime-regions} we perform the temporal recursion on the probability mass function for sizes of minus spacetime clusters under the rigid dynamics: this is the technical core of the paper. In Section~\ref{sec:legacy-analysis}, we use the exponential tails we proved on minus spacetime regions, to show negative drift on the legacy region, as long as it has size less than $\frac{n}{1000}$. This establishes the claims of Theorems~\ref{thm:main}--\ref{thm:main-Potts} so long as the initial bias is greater than $1-\gamma_0$ for a universal constant $\gamma_0>0$. Finally, in Section~\ref{sec:magnetization-analysis}, we use a significantly simpler analysis of the drift of the magnetization process at large $d$ to improve to biases that are~$o_d(1)$.

 \subsection*{Acknowledgements}
R.G.\ thanks Shirshendu Ganguly and Eyal Lubetzky for illuminating conversations related to this problem. The research of R.G.\ is supported in part by NSF grant DMS-2246780. A.S. was partially supported by a Simons Investigator grant and conducted this research while a visitor at MIT and Stanford.

\section{Confining negative information spread with plus initialization}\label{sec:minus-regions}
In this section, we construct minus spacetime clusters and their time-$t$ slices, called minus regions, via a graphical spacetime representation of two-spin dynamics. We then establish that these minus spacetime clusters are internally measurable, so that conditioning on them in future parts of the argument is tractable. From there, we introduce the dominating process known as the rigid dynamics and describe important combinatorial properties this modified dynamics retains.

\subsection{A general family of low-temperature two-spin dynamics}
We consider a family of Markov processes on $\{\pm 1\}^{n}$ that include the Ising Glauber dynamics, but more generally can be used to bound the excitations in a Potts model from its dominant color. In fact, we will present minimal assumptions on the $\{\pm 1\}^n$-valued continuous-time chain for which our analysis applies, which can generically be used to control growth of thermal deviations (encoded in minuses) away from a certain dominant state (encoded by the all-plus one). 

\begin{definition}\label{def:general-ising-process}
    In what follows, suppose that $(X_t^{x_0})_{t\ge 0}$ is a continuous-time Markov process on $\{\pm 1\}^n$ initialized from $x_0$, which makes updates as follows: 
\begin{itemize}
    \item Assign each vertex an independent rate-$1$ Poisson clock; 
    \item When the clock at vertex $v$ rings, keep $X_t(w) = X_{t^-}(w)$ for $w\ne v$, and resample 
    \begin{align}\label{eq:update-rule}
        X_t(v) = \begin{cases}
            +1  & \quad \text{with probability $p_+((X_{t^-}(w))_{w\sim v}$}\\ -1 & \quad \text{else}
        \end{cases}\,,
    \end{align}
    where we assume that $p_+: \{\pm 1\}^d \to [0,1]$ satisfies the following. 
    \begin{enumerate}[(A)]
    \item $p_{+}$ is a non-decreasing function on $\{\pm 1\}^d$. 
    \item If $|\{1\le i\le d: \eta_i = +1\}|\ge 4d/7$ and $d\ge 7$, then $p_+(\eta) \ge \frac{1}{1 +  e^{ - 2\beta d/7} }$. 
    \end{enumerate}
\end{itemize}
\end{definition}

We will refer to a Markov process $(X_t)_{t\ge 0}$ in the form of Definition~\ref{def:general-ising-process} as the \emph{two-spin dynamics}. 

\begin{example}\label{ex:Ising}
The continuous-time Ising Glauber dynamics has 
\begin{align*}
    p_+(\eta)  = \frac{1}{1+e^{ - 2\beta \sum_{i=1}^d \eta_i}}\,,
\end{align*}
and therefore satisfies properties (A), (B). For concreteness, the reader can keep this in mind as the process $X_t^{x_0}$ throughout, noting that (A), (B) are the only properties of that Markov processes jump rates that are actually used in our proofs. 
\end{example}

\begin{example}\label{ex:Potts}
For the application to the $q$-state Potts dynamics,  
if $\betap$ denotes the inverse temperature of the Potts Glauber dynamics, then we utilize a dominating two-spin dynamics with $p_{+}$ chosen as follows. Let $\betap=2\beta+\frac{C\log (q-1)}{d}$, and for $\eta=(\eta_i)_{i\in [d]}\in \{\pm 1\}^{d}$, let
\begin{align}\label{eq:Potts-two-spin-chain}
p_{+}(\eta)=\begin{cases}
            \frac{1}{1+e^{-2\beta d/7}}  & \quad \text{if}~~~~~\sum_{i=1}^{d}\one \{\eta_i=+1\}\geq \frac{4d}{7}
            \\ 0 & \quad \text{else}
        \end{cases}\,.
\end{align}
Items (A), (B) are immediate for these transition probabilities. The following lemma will ensure that the set of vertices not in state-$1$ in a Potts Glauber dynamics $(Y_t)_{t\ge 0}$ are a subset of the set of minus vertices of $(X_t)_{t\ge 0}$ with initialization such that $+1$ if $Y_0(v) =1$ and $-1$ if $Y_0(v)\in \{2,...,q\}$. 
\begin{lemma}\label{lem:potts-domination}
    If $\betap=2\beta+\frac{C\log (q-1)}{d}$, then for all $\zeta \in [q]^d$ such that $\sum_{i=1}^{d}\one \{\zeta_i=1\}\geq \frac{4d}{7}$,
    \[
  \frac{e^{ \betap \sum_{i=1}^d \mathbf 1\{\zeta_i = 1\}}}{\sum_{r\in [q]} e^{ \betap \sum_{i=1}^d \mathbf 1\{\zeta_i = r\}}}\geq \frac{1}{1+e^{-2\beta d/7}}.
    \]
\end{lemma}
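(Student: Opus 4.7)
The plan is to do a direct estimate, reducing the ratio bound to a comparison between the plus-state term $e^{\betap N_1}$ and the sum of the other $q-1$ terms, then using that $N_1$ is a strict majority.

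First I would introduce the abbreviations $N_r := |\{i \in [d] : \zeta_i = r\}|$, so $\sum_{r \in [q]} N_r = d$ and the hypothesis becomes $N_1 \geq 4d/7$, which in particular forces $N_r \leq d - N_1 \leq 3d/7$ for every $r \neq 1$. Next I would rewrite the target inequality
\[
\frac{e^{\betap N_1}}{\sum_{r \in [q]} e^{\betap N_r}} \geq \frac{1}{1+e^{-2\beta d/7}}
\]
by cross-multiplying and cancelling the common $e^{\betap N_1}$ term on both sides. This reduces it to showing
\[
e^{\betap N_1 - 2\beta d/7} \geq \sum_{r \neq 1} e^{\betap N_r}.
\]

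Now I would use the trivial bound $\sum_{r \neq 1} e^{\betap N_r} \leq (q-1) e^{\betap \cdot 3d/7}$, since $N_r \leq 3d/7$ for each $r \neq 1$. Taking logarithms, it suffices to establish
\[
\betap(N_1 - 3d/7) \geq 2\beta d/7 + \log(q-1).
\]
Using $N_1 - 3d/7 \geq d/7$, this in turn follows from $\betap \geq 2\beta + 7\log(q-1)/d$, which is exactly the content of the hypothesis $\betap = 2\beta + C\log(q-1)/d$ for any $C \geq 7$.

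The argument is essentially two lines of arithmetic, and I do not anticipate any obstacle; the only thing to be careful about is that the two bounds used (the majority threshold $N_1 \geq 4d/7$ giving $N_r \leq 3d/7$, and the crude union bound $(q-1)e^{\betap \cdot 3d/7}$) together pin down the exact constant $C$ needed in the definition of $\betap$. The choice of the thresholds $4d/7$ and $3d/7$ in Definition~\ref{def:general-ising-process}(B) is what makes the slack $(N_1 - 3d/7) \geq d/7$ match the $2\beta d/7$ gap appearing in the denominator of the target, so the inequality is tight up to the $\log(q-1)/d$ correction absorbed into $\betap$.
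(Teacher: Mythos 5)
Your proof is correct. The arithmetic is clean: cancelling the $e^{\betap N_1}$ term after cross-multiplication reduces the claim to $e^{\betap N_1 - 2\beta d/7} \geq \sum_{r\neq 1} e^{\betap N_r}$, and bounding the right side by $(q-1)e^{3\betap d/7}$ is legitimate since $N_r \leq d - N_1 \leq 3d/7$ for every $r\neq 1$. The final log inequality requires $C\geq 7$, matching the paper.

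The route differs slightly from the paper's: instead of your crude union bound $\sum_{r\neq 1}e^{\betap N_r}\leq (q-1)e^{3\betap d/7}$, the paper invokes Karamata's inequality (convexity of $x\mapsto e^{\betap x}$) to argue that, with $k_1$ fixed, the denominator is maximized at the extremal partition $k_2=d-k_1$, $k_3=\cdots=k_q=0$, yielding the sharper bound $e^{\betap k_1}+e^{\betap(d-k_1)}+(q-2)$. Your bound is larger, since you place every non-dominant state at the maximal $e^{3\betap d/7}$ rather than allowing most of them to contribute only $1$. However, both bounds feed into the same final condition $\betap - 2\beta \geq 7\log(q-1)/d$, so nothing is lost: the constant is determined by the single state carrying mass $d-k_1$, not by the remaining ones. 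Your version is more elementary and arguably cleaner here, though the paper's Karamata step would matter if one needed a sharper $q$-dependence.
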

\begin{proof}
    Let $k_r=\sum_{i=1}^{d}\one\{\zeta_i=r\}$ for $r\in [q]$, and without loss of generality, assume $k_1\geq \ldots \ge k_q$.   Since $x\mapsto e^{\beta x}$ is convex, Karamata's inequality shows that fixing $k_1$, the denominator is maximized when $k_2=d-k_1$ and $k_3=\ldots k_q=0$. Thus, if $k_1\geq \frac{4d}{7}$,
    \[
    \frac{e^{\betap k_1}}{\sum_{r\in [q]}e^{\betap k_r}}\geq \frac{e^{\betap k_1}}{e^{\betap k_1}+e^{\betap(d-k_1)}+q-2}\geq \frac{1}{1+e^{-\betap d/7}+(q-2)e^{-4\betap d /7}}.
    \]
    Finally, note that $e^{-\betap d/7}+(q-2)e^{-4\betap d /7}\leq e^{-2\beta d/7}$ holds whenever $\betap-2\beta \geq \frac{7\log (q-1)}{d}$.
\end{proof}
\end{example}

\begin{example}\label{ex:noisy-majority}
    The noisy majority dynamics with odd-degree $d$ at noise parameter $p$ is the continuous-time Markov chain that has $p_+(\eta) = 1-p$ if $\sum_{i=1}^d \eta_i >0$, and $p_+ = p$ if $\sum_{i=1}^d \eta_i <0$. If $d$ is even and $\sum_{i=1}^d \eta_i = 0$, let $p_+(\eta) = \frac{1}{2}$. It is easy to see that this satisfies conditions (A),~(B) so long as $p<e^{ - 2\beta d/7}$. Thus, all our theorems on fast coupling of biased initializations to the plus initialization chain, etc., could be applied to this Markov process. We mention this example because it is a non-reversible Markov chain whose non-reversibility constrains the applicability of spectral and functional analytic tools. 
\end{example}

\subsection{Spacetime constructions and minus regions}

The starting point is the graphical spacetime representation of the two-spin dynamics, which is also used in the grand coupling of multiple two-spin chains which we will return to in Section~\ref{sec:legacy-analysis}.

\begin{definition}\label{def:grand-coupling}
    For all initializations $x_0 \in \{\pm 1\}^n$, the family of continuous-time two-spin chains $(X_t^{x_0})_{x_0\in \Omega, t\ge 0}$ can be generated as follows: initialize $X_0^{x_0} = x_0$, and 
    \begin{enumerate}
        \item Assign every vertex $v\in V$ a rate-$1$ Poisson clock whose ring times can be enumerated $0<t_{v,1} <t_{v,2}<...$, and a sequence of i.i.d.\ $\text{Unif}[0,1]$ random variables $U_{v,1},U_{v,2},...$. 
        \item At time $t  = t_{v,i}$, from the configuration $X_{t^-}^{x_0}$, generate $X_{t}^{x_0}$ by setting 
        \begin{align}\label{eq:update-in-terms-of-uniform}
            X_{t}^{x_0}(v) = \begin{cases}
                -1 & 0\le U_{v,i} \le  1-p_+((X_{t^-}^{x_0}(w))_{w\sim v}) \\ 
                +1 & \text{else}
            \end{cases}\,.
        \end{align}
        and $X_{t}^{x_0}(w) = X_{t^-}^{x_0}(w)$ for all $w\ne v$. 
    \end{enumerate}
    Let $\cF_{t}$ denote the filtration generated by all clock ring times at or before time $t$, along with all uniform random variables associated to those clock rings. We use the standard notation of $\cF_{t^-} = \lim_{s\uparrow t} \cF_s$, and more generally for cadlag processes, $Z_{t^-}= \lim_{s\uparrow t} Z_s$. 
\end{definition}
It is clear that  this is a valid coupling of the two-spin dynamics chains $(X_t^{x_0})_{x_0 \in \Omega,t\ge 0}$, and furthermore, the monotonicity of the two-spin dynamics ensures that if $x_0 \le x_0'$ in the partial order on $\Omega$, then $X_{t}^{x_0} \le X_{t}^{x_0'}$ for all $t\ge 0$. These are properties we will return to, but for this section we are purely interested in understanding properties of the chain $X_t^+$ that is initialized from the all-plus initialization. 
For that all-plus initialization chain, we define the key notion of \emph{minus spacetime clusters} at time $t$, which will be $\cF_t$-measurable. 

\begin{definition}\label{def:minus-spacetime-clusters}
    Construct the minus subset $\mathcal S_{\le t} = \mathcal S_{\le t}((X_s^+)_{s\in [0,t]})$ of $V\times [0,t]$ by setting 
    \begin{align*}
        \mathcal S_{\le t} = \bigcup  \{(v,s) \in V\times [0,t]:  X_t^+(v) =-1\}\,.
    \end{align*}
    A \emph{minus spacetime cluster} $\mathcal C_{\le t}$ is a maximal connected component of $\mathcal S_{\le t} \subset G \times [0,t]$, where connectedness is through $G$ spatially, and through $\mathbb R_+$ temporally. 
\end{definition}

\begin{definition}\label{def:minus-region}
    A minus spacetime cluster $\mathcal C_{\le t}$ induces its present slice, which we call a \emph{minus region} $\mathcal R_t$ which will be $\mathcal C_{\le t} \cap (V\times \{t\})$ viewed as a vertex subset $\mathcal R_t \subset V$.  
\end{definition}

The spacetime clusters intersecting $V\times \{t\}$ can be indexed by the vertex $v$ that they contain, meaning for a vertex $v\in V$, we write $\mathcal{C}_{v,\le t}$ to mean the (possibly empty) spacetime cluster containing $(v,t)\in V\times [0,t]$. The vertex projection of $\cC_{v,\le t}$ denoted $\cC_{t}(v) \subset V$ is the set of all vertices to have ever been a member of the spacetime cluster $\cC_{v,\le t}$, 
\begin{align}\label{eq:C-t-v}
    \cC_{t}(v) = \{w\in V: (w,s)\in \cC_{v,\le t} \text{ for some $s\le t$}\}\,.
\end{align}
Notice that the vertex projection $\cC_{t}(v)$ of a spacetime cluster $\cC_{v,\le t}$ is $G$-connected. 

In this manner, minus regions can similarly be indexed by constituent vertices, writing $\mathcal R_{t}(v)$ to mean the minus region containing vertex $v$ at time $t$, 
\begin{align}\label{eq:R-t-v}
    \cR_{t}(v)= \cC_{v,\le t}\cap (V\times \{t\})\,.
\end{align}
Notice that $\cR_t(v) \subset \cC_t(v)$ but $\cR_t(v)$ need not be $G$-connected; this is why we refer to it as a minus region as opposed to cluster.

 \begin{definition}\label{def:boundary}
     For a minus spacetime cluster $\mathcal C_{\le t}$ define $\partial_o \mathcal C_{\le t}$ as its outer spacetime boundary, consisting of 
 \begin{itemize}
     \item its outer spatial boundary:  all spacetime points $(v,s)\in V\times [0,t]$ such that $(v,s) \notin \mathcal C_{\le t}$ but there exists a vertex $w$ with $\{v,w\}\in E$ such that  $(w,s) \in \mathcal C_{\le t}$;
      \item its lower outer temporal boundary: if $(w,t_0)$ is the minimum in a connected component of the set $\{s\in [0,\infty):(w,s)\in \cC_{\le t} \}$
      and $t_0>0$, then include $(w,t_0^-): = \lim_{s\uparrow t_0}(w,s)$;
     \item its upper outer temporal boundary: if $(w,t_0)$ is the supremum in a connected component of the set $\{s\in [0,\infty):(w,s)\in \cC_{\le t} \}$ and is not in $\cC_{\le t}$, then include $(w,t_0)$. 
\end{itemize}
 \end{definition}
 
By construction (as a maximal connected minus component of $G\times [0,t]$), at all spacetime points in $(w,s)\in \partial_o \mathcal C_{\le t}$, the two-spin dynamics has $X_s^+(w) =+1$.  
More generally, we have the following important observations.

\begin{observation}\label{obs:properties-of-regions}
    Firstly, for two distinct minus spacetime clusters $\mathcal C_{\le t} \ne \mathcal C_{\le t}'$, the corresponding minus regions $\mathcal R_t, \mathcal R_t'$ are disjoint and in fact have distance at least $2$.  
    
    In $X_t^+$, any connected set of minuses containing a vertex $v$ must belong to the same region $\cR_t(v)$, as the outer boundary of $\mathcal R_{t}(v)$ consists entirely of $+1$ sites. Furthermore observe that  
\begin{align}\label{eq:minus-regions-equal-minuses}
    \bigsqcup \mathcal R_{t} = \{v: X_t(v) = -1\}\,,
\end{align} 
i.e., the set of minus sites at time $t$ are given by the disjoint union of all minus regions at time $t$. 
\end{observation}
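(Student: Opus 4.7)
The plan is to unpack the maximality property built into Definition~\ref{def:minus-spacetime-clusters}: minus spacetime clusters are the \emph{maximal} connected components of $\cS_{\le t}$ inside $G\times[0,t]$, where connectedness is spatial through $G$ and temporal through $\R_+$. Once this is put to work, both parts of the observation are essentially immediate.

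For the first assertion, I would argue by contradiction. Suppose $v\in \cR_t$ and $w\in \cR_t'$ satisfy $\dist_G(v,w)\le 1$ for two distinct spacetime clusters $\cC_{\le t}\ne \cC_{\le t}'$. By Definition~\ref{def:minus-region} the spacetime points $(v,t)$ and $(w,t)$ lie in $\cC_{\le t}$ and $\cC_{\le t}'$ respectively. If $v=w$ then $(v,t)\in \cC_{\le t}\cap \cC_{\le t}'$ contradicts distinctness of maximal components. If $v\sim w$ in $G$, then $(v,t)$ and $(w,t)$ are spatially adjacent points of $\cS_{\le t}$, so by the definition of connectedness in $G\times[0,t]$ they must lie in a common connected component, again contradicting distinctness. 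Hence the time-$t$ slices of distinct spacetime clusters have mutual $G$-distance at least two.

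For the second assertion, let $S\subset V$ be any $G$-connected set of minus vertices at time $t$ containing $v$. Then $S\times\{t\}\subset \cS_{\le t}$ is spatially connected, so it lies inside a single connected component of $\cS_{\le t}$; the component containing $(v,t)$ is exactly $\cC_{v,\le t}$, so $S\subset \cC_{v,\le t}\cap(V\times\{t\})=\cR_t(v)$. The outer-boundary claim is a direct consequence: a spatial neighbor $u$ of $\cR_t(v)$ that were minus at time $t$ would, together with $\cR_t(v)$, form a $G$-connected minus set at time $t$ strictly larger than $\cR_t(v)$, contradicting the conclusion just obtained; hence such neighbors are $+1$ in $X_t^+$. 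Finally, \eqref{eq:minus-regions-equal-minuses} follows by double inclusion: every minus vertex $v$ at time $t$ sits in its own region $\cR_t(v)$, while every element of a minus region is minus by construction, and disjointness of distinct regions was established in the first assertion.

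There is no substantive obstacle here; the only care needed is to distinguish spatial adjacency at a fixed time-slice from the richer spatio-temporal connectedness of $\cS_{\le t}$, and to note that the temporal pieces of $\partial_o \cC_{\le t}$ from Definition~\ref{def:boundary} play no role in these purely time-$t$ statements.
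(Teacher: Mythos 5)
Your proposal is correct and aligns with the paper's intended justification, which is left implicit beyond the preceding remark that spacetime clusters are maximal connected components. The one small improvement over the paper's phrasing is that you derive the connectedness claim directly from maximality and then deduce the all-plus boundary as a consequence, rather than the other way around, but the content is the same.
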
 

The spacetime construction of Definition~\ref{def:minus-spacetime-clusters} and the definition of regions from that, naturally induces a Markov dynamics on the set of minus regions $(\mathcal R_t(v))_{v\in V}$. The following is immediate from the construction of Definition~\ref{def:minus-spacetime-clusters} and can help with intuition when computing drifts of the size of a minus region in the subsequent. 

\begin{observation}\label{obs:minus-region-evolution}
       Given minus regions $(\mathcal R_{t^-}(v))_{v\in V}$ from a process $(X_t)_{t\ge 0}$ with an update at $w$ at time $t$, the set of minus regions are updated as follows: 
       \begin{enumerate}
       \item If $X_{t^-}(w) = -1$ and $X_t(w) = +1$: then $\mathcal R_{t}(w) = \emptyset$; for every $v\in \mathcal R_{t^-}(w)$, $w$ is removed from its region so that $\mathcal R_{t}(v) = \mathcal R_{t^-}(v)\setminus \{w\}$; and all regions not containing $w$ are unchanged. 
       \item If $X_{t^-}(w) = +1$ and $X_{t}(w) =-1$:  then set $$ \mathcal R_t(w)  = \{w\} \cup \bigcup_{z\sim w} \mathcal R_{t^-}(z)\,;$$
        for every $z\in \mathcal R_t(w)$, let $\mathcal R_t(z) = \mathcal R_t(w)$: i.e., all these minus regions are merged; and all other minus regions are unchanged. 
       \end{enumerate}
       Notice that case (1) happens if $\mathcal R_{t^-}(w)\ne \emptyset$ with probability $p_+((X_{t^-}(z))_{z\sim w})$, which is measurable with respect to $(\mathcal R_{t^-}(v))_{v\in V}$ by~\eqref{eq:minus-regions-equal-minuses}, and similarly for case (2). Therefore, this dynamics is indeed Markovian. 
\end{observation}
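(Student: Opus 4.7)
The plan is to trace how a single update at a vertex $w$ at time $t$ modifies the minus spacetime set $\cS_{\le t}$ and hence the regions obtained as time-$t$ slices of maximal connected components via Definitions~\ref{def:minus-spacetime-clusters} and~\ref{def:minus-region}. Since only the coordinate at $w$ can change at this update, $\cS_{\le t}$ differs from the temporal extension of $\cS_{\le t^-}$ only in whether the single spacetime point $(w,t)$ is present, so the only clusters whose membership can change are those incident to $(w,t)$ or to its spatial neighbors at time $t$; this gives the final ``all other minus regions are unchanged'' clause in each case automatically.

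For case~(1), where $X_{t^-}(w)=-1$ and $X_t(w)=+1$, removing the top point $(w,t)$ cannot disconnect $\cC_{w,\le t^-}$: the trajectory $\{w\}\times [s_0,t)$ for some $s_0<t$ lies entirely in $\cC_{w,\le t^-}$, so any path inside that cluster which traversed $(w,t^-)$ can be rerouted through $(w,s)$ for $s\in [s_0,t)$. Thus the cluster remains one maximal connected component with time-$t$ slice equal to $\cR_{t^-}(w)\setminus\{w\}$, so $\cR_t(v)=\cR_{t^-}(w)\setminus\{w\}$ for every $v\in \cR_{t^-}(w)\setminus\{w\}$, while~\eqref{eq:minus-regions-equal-minuses} places $w$ in no region, yielding $\cR_t(w)=\emptyset$.

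For case~(2), where $X_{t^-}(w)=+1$ and $X_t(w)=-1$, the newly minus spacetime point $(w,t)$ is spatially adjacent in $\cS_{\le t}$ precisely to the points $(z,t)$ with $z\sim w$ and $\cR_{t^-}(z)\ne\emptyset$. By Observation~\ref{obs:properties-of-regions}, distinct minus regions at time $t^-$ belong to disjoint spacetime clusters, so inserting $(w,t)$ merges $\{(w,t)\}$ together with each such $\cC_{z,\le t^-}$ into a single maximal connected component whose time-$t$ slice is $\{w\}\cup\bigcup_{z\sim w}\cR_{t^-}(z)$, as claimed. Markovianity is then immediate: updates occur at the rings of independent rate-$1$ Poisson clocks, and conditional on a ring at $w$, the flip probability is $p_+((X_{t^-}(z))_{z\sim w})$ by~\eqref{eq:update-rule}, which is a function of $(\cR_{t^-}(v))_{v\in V}$ via~\eqref{eq:minus-regions-equal-minuses}. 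The main subtle point I expect is the non-disconnection assertion in case~(1); it goes through once one uses that $(X_s^+)_s$ is right-continuous and that distinct clock rings occur at a.s.\ isolated times.
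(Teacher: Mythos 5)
Your proof is correct, and since the paper itself gives no proof (it declares the observation ``immediate from the construction of Definition~\ref{def:minus-spacetime-clusters}''), you are essentially supplying the details the authors left implicit. You correctly identify that the only nontrivial point is that in case~(1), deleting the single top spacetime point $(w,t)$ does not split the cluster — and your reason is the right one: for any two $a,b\in\mathcal R_{t^-}(w)\setminus\{w\}$, the points $(a,t)$ and $(b,t)$ remain connected in $\mathcal S_{\le t}$ via their temporal histories $(a,s)$, $(b,s)$ with $s$ slightly below $t$, because $\mathcal S_{\le t}\cap (V\times[0,s])=\mathcal S_{\le s}$ and only $w$ is updated in $(s,t]$ (a.s.\ isolated clock rings). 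The phrasing ``reroute through $(w,s)$'' is a bit loose — the cleaner statement is that connectivity among non-$w$ vertices at time $t$ is inherited from time $t^-$ through the past, and removing $(w,t)$ can only decrease (never increase) connectivity, so no spurious merges occur either — but the substance is sound, and the case~(2) and Markovianity discussions are accurate.
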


\subsection{Internal measurability of minus spacetime clusters}
The importance of working with minus regions as opposed to minus connected components is that they group together minus connected components that came from the same source, and therefore share some negative information, even if they split at some earlier time. 

At various points in the paper, we will condition on the realizations of certain spacetime regions of vertices at different times. Since all specific realizations have probability $0$ due to the time-continuity, let us introduce some notation to describe the family of conditionings that are valid. 

\begin{definition}\label{def:admissible-clock-process}
    A set of clock rings $\cT = (t_{v,i})_{v\in V,i\ge 1}$ (which is a point process in $V\times [0,\infty)$) is \emph{admissible} if $\cT$ consists of distinct points, and is locally finite. 
\end{definition}

Note that the clock ring sequence $\mathcal T$ given by rate-1 Poisson clocks at each vertex per Definition~\ref{def:grand-coupling} is almost surely admissible. 

\begin{definition}\label{def:admissible-cluster}
    A spacetime cluster $C\subset V\times [0,t]$ is $(u,t)$-admissible if there exists an admissible $\cT$ such that $\mathbb P( \cC_{u,\le t} =C \mid \cT)>0$.  
\end{definition}

We make the following observation that the information learned from conditioning on the event $\{\cC_{u,\le t} = C\}$ is only internal, or on the spacetime boundary, of $C$. 
\begin{observation}\label{obs:spacetime-cluster-internally-measurable}
    The event $\{\cC_{u,\le t} = C\}$ is measurable with respect to the following:
        \begin{itemize}
        \item The set $\mathcal T(C)$ of all clock ring times $t_{w,j}$ where $(w,t_{w,j}^-) \in C$ while $(w,t_{w,j}) \notin C$ or vice versa. 
        \item The set of all uniform random variables associated to those times, i.e., $U_{w,j}$ for $w,j$ such that $t_{w,j}\in \mathcal T(C)$; 
        \item The indicator random variables $(I_{(w,r)})_{(w,r)\in \partial_o C}$ that the outer boundary spacetime points $(w,r)$ in $C \cap (V \times [0,t])$ all had $X_r(w) = +1$. 
    \end{itemize}
\end{observation}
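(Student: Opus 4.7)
The plan is to write $\{\cC_{u,\le t} = C\}$ as a Boolean combination of events each already lying in the $\sigma$-algebra generated by the three listed families. First, I would split the event as the conjunction of (I) $X_s^+(w) = -1$ for every $(w,s) \in C$, and (II) $X_r^+(w) = +1$ for every $(w,r) \in \partial_o C$; since $(u,t) \in C$ is guaranteed by $(u,t)$-admissibility, (I) and (II) together are equivalent to the maximal minus-connected component of $(u,t)$ being exactly $C$. Event (II) is immediately measurable, being the intersection $\bigcap_{(w,r) \in \partial_o C} \{I_{(w,r)} = 1\}$.

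For (I), I would induct along the time ordering of the (locally finite) set $\mathcal{T}(C)$. Since $X_0^+ \equiv +1$, the value $X_s^+(w)$ for $(w,s) \in C$ is determined by the history of clock rings at $w$ on $[0,s]$, the associated uniforms, and the neighbor configurations at those rings. By the defining property of $\mathcal{T}(C)$, every ring at $w$ whose effect changes $w$'s membership in $C$ is in $\mathcal{T}(C)$; any other ring at $w$ either produces no spin change, or fires during a time interval on which $w$ lies entirely outside $C$, and therefore does not affect $X_s^+(w)$ at any $s$ with $(w,s) \in C$. Hence the restriction of the spin process at $w$ to its $C$-intervals is recoverable from $\mathcal{T}(C)$, the uniforms at those rings, and the neighbor spin values at those ring times.

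To close the induction, observe that at each ring $t_{w,j} \in \mathcal{T}(C)$, every spatial neighbor $w'$ of $w$ lies at time $t_{w,j}$ in $C \cup \partial_o C$: if $(w', t_{w,j}) \in C$, its spin $-1$ is provided by the inductive hypothesis applied to the previous $C$-entering ring at $w'$ (or to the initial all-plus configuration, which would contradict membership in $C$ and hence rule out that case); if $(w', t_{w,j}) \in \partial_o C$, its spin $+1$ is encoded by $I_{(w', t_{w,j})}$. This dichotomy is exactly the definition of the outer spatial boundary of $C$. The update rule~\eqref{eq:update-in-terms-of-uniform} then reduces consistency of the flip at $t_{w,j}$ with $C$ to a measurable threshold condition on $U_{w,j}$.

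The main subtlety to handle will be rings $t_{w,j} \in \mathcal{T}(C)$ where the observed $U_{w,j}$ falls on the ``wrong'' side of the flip threshold, so that the expected spin change does not occur and $(w, t_{w,j})$ becomes inconsistent with $C$; such cases simply make event (I) fail, and remain measurable with respect to the listed data, so $\mathbf{1}\{\cC_{u,\le t} = C\}$ is correctly reconstructed. A minor additional check is needed at the temporal boundary entries of $\partial_o C$: the entries $(w, t_0^-)$ and $(w, t_0)$ from Definition~\ref{def:boundary} encode the spin value immediately before $w$ enters $C$ and immediately after $w$ exits $C$, and the associated indicators ensure the correct flip is recorded at the moment of entry or exit without referring to random variables outside the three stipulated families.
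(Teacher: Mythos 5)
The paper does not give a proof for this observation, so I am evaluating your argument on its own. Your decomposition into events (I) and (II), the forward-in-time induction along the clock rings, and the key structural remark that at a boundary-crossing ring every neighbor of $w$ sits in $C\cup\partial_o C$ are all sound and are the right ingredients.

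However, there is a real gap concerning clock rings \emph{interior} to $C$, i.e., rings $t_{w,j}$ at which $(w,t_{w,j}^-)\in C$ and $(w,t_{w,j})\in C$. Such rings are \emph{not} in $\mathcal T(C)$ as defined in the observation (they are not boundary-crossing), and their uniforms $U_{w,j}$ are not among the listed data. You write that ``any other ring at $w$ either produces no spin change, or fires during a time interval on which $w$ lies entirely outside $C$''. But the first alternative is only true \emph{on the event} $\{\cC_{u,\le t}=C\}$ that you are trying to prove measurable; an interior ring whose uniform lands above the stay-minus threshold would flip $w$ to $+1$ and make the event fail, and whether this happens is determined by an interior uniform that the three listed families do not see. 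Since one can construct two realizations that agree on $\mathcal T(C)$, on the uniforms at $\mathcal T(C)$, and on the boundary indicators, yet differ in an interior uniform so that exactly one of them satisfies $\cC_{u,\le t}=C$, the indicator $\mathbf 1\{\cC_{u,\le t}=C\}$ is not a function of the listed data as the observation literally states it.

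To repair this, one should enlarge the first two bullet points to include \emph{all} rings $t_{w,j}$ with $(w,t_{w,j})\in C$ or $(w,t_{w,j}^-)\in C$ (not only boundary crossings) together with their uniforms. This is exactly the set $\mathcal T_{\vec C}$ that the paper actually conditions on in the proof of Lemma~\ref{lem:conditional-flip-rate}, so the downstream arguments are unaffected. Your inductive scheme then goes through essentially unchanged once the induction is run over all rings in this enlarged set rather than only over $\mathcal T(C)$: at an interior ring the neighbor configuration is still read off from $C\cup\partial_o C$ exactly as you argue for the boundary rings, and a ``wrong-side'' uniform at an interior ring falsifies event (I) in a way that \emph{is} measurable with respect to the enlarged data. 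In short, your approach is correct in spirit, but as written it inherits an imprecision in the observation itself and does not close the measurability claim; you need the interior rings and their uniforms in the conditioning set.
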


Informally speaking, since the spacetime boundary of the spacetime cluster is forced to be all-plus, and the dynamics' update rules are local (an update at distance two from a spacetime cluster $\cC_{v,\le t}$ is not influenced by the interior of the spacetime cluster) conditioning on the above information only gives increasing information on its exterior. In this sense, a minus spacetime region confines the negative information of its interior. This notion is formalized in a few different ways in our proof: see the upcoming Lemmas~\ref{lem:conditional-flip-rate} and~\ref{lem:spacetime-cluster-increasing-on-exterior}.  

Let us end the subsection with a comment on analogous constructions for two-spin dynamics from other initializations, and the forthcoming rigid dynamics. 

\begin{remark}\label{rem:spacetime-cluster-construction-general-init}
The above construction of Definition~\ref{def:minus-spacetime-clusters} (and subsequent definitions of minus regions) can be done with any process $(\bar X_s)_{s\in [0,t]}= (\bar X_s(\mathcal T,\mathcal U))_{s\in [0,T]}$ that takes values in $\{+1,-1\}$ at every spacetime point and only flips states of vertices at clock ring times with $\cF_{t^-}$-measurable probability. In particular, the construction is analogous for two-spin dynamics from any other initialization $(X_t^{x_0})_{t\ge 0}$ and for the subsequently-defined rigid dynamics mentioned in the proof sketch because the property of being a trifurcation point is $\cF_{t^-}$-measurable. Moreover, Observations~\ref{obs:properties-of-regions},~\ref{obs:minus-region-evolution}, and~\ref{obs:spacetime-cluster-internally-measurable} applies mutatis mutandis in all these cases; indeed for the rigid dynamics that will be the case because the status of a vertex $v$ as a trifurcation point will be $\mathcal C_{v,\le t}$-measurable. 
\end{remark}

\subsection{Trifurcation points and the rigid dynamics}
Our aim is to control the growth of minus spacetime clusters in time (and therefore provide exponential tails on their slices which form the minus regions). However, these regions will not necessarily have negative drift due to possible ``holes" in their slices which are incident to many vertices of the same minus region and therefore flip back to minus quickly. 

Our solution to address this difficulty is to consider a variant of the two-spin dynamics, which we call the \emph{rigid dynamics} $\widetilde X_t$ which will not allow such holes in minus regions to develop in the first place. This rigid dynamics will on the one hand be stochastically below $X_t^+$, and on the other hand, exhibit the desired negative drift on the size of its minus regions (at least while they are $O(\log n)$ sized). 
Towards that definition, in what follows, let \begin{align}\label{eq:R}R = \frac{1}{4}\log_{d} n\,,\end{align} so that with probability $1-o(1)$, $G$ is such that for every $v$, the ball $B_R(v)$ has at most $1$ cycle. 

\begin{definition}\label{def:1-treelike}
    $G$ is $1$-locally-treelike if for every $v$, $B_R(v)$ has at most one cycle. 
\end{definition}

The following fact is classical and easy to check by the configuration model contiguity to the random $d$-regular graph: see e.g.,~\cite[Lemma 2.1]{LuSly-Cutoff-RRG}. 

\begin{fact}\label{fact:G_d(n)-1-treelike}
    If $d\ge 3$ and $\alpha<1/4$ then with probability $1-o(1)$, for every $v$, $B_{\alpha \log_{d-1} n}(v)$ has at most one cycle. In particular, with probability $1-o(1)$, $G\sim \mathcal G_d(n)$ is $1$-locally-treelike.  
\end{fact}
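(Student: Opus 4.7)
The plan is to pass to the configuration model, in which each of the $n$ vertices is given $d$ half-edges and the $dn$ half-edges are paired uniformly at random. Conditioning on this matching producing a simple graph recovers $\mathcal G_d(n)$, and since for fixed $d$ the probability of simplicity stays bounded away from $0$ as $n\to\infty$, it will suffice to establish the claim with failure probability $o(1)$ in the unconditioned configuration model.

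Fix a vertex $v$ and set $r=\alpha\log_{d-1}n$. The plan is to explore $B_r(v)$ by breadth-first search on half-edges, revealing one pairing at a time starting from the $d$ half-edges at $v$ and continuing until every vertex within graph distance $r$ of $v$ has been reached. Call the $k$-th revealed pairing \emph{internal} if its partner lies at a vertex already in the exploration; in a BFS, each internal pairing produces exactly one new independent cycle of $B_r(v)$, so the number of cycles in $B_r(v)$ equals the total count $Z$ of internal pairings. The key probabilistic input will be that at step $k$, at most $d\,|B_r(v)|$ of the $dn-2k+1$ remaining unpaired half-edges sit on already-explored vertices, so conditional on the exploration history the $k$-th pairing is internal with probability $O(|B_r(v)|/n)$.

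From here I would use the deterministic bound $|B_r(v)|\le C_d(d-1)^r$ on the configuration-model ball and, summing over ordered pairs of BFS steps, obtain via a second-moment calculation
\begin{align*}
\mathbb{E}[Z(Z-1)] \;\le\; C_d\,|B_r(v)|^2\cdot\bigl(|B_r(v)|/n\bigr)^2 \;\le\; C_d\,n^{4\alpha-2}.
\end{align*}
Markov's inequality gives $\mathbb{P}(Z\ge 2)\le C_d\,n^{4\alpha-2}$ for each fixed $v$, and a union bound over the $n$ vertices yields a total failure probability of $O(n^{4\alpha-1})$, which is $o(1)$ exactly when $\alpha<1/4$. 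The ``in particular'' clause is then immediate, since $R=\tfrac14\log_d n \le \alpha\log_{d-1}n$ for the choice $\alpha=\tfrac{\log(d-1)}{4\log d}$, a value strictly less than $1/4$ for $d\ge 3$.

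The main obstacle, though fairly standard in the random-regular-graph literature, will be the second-moment bound: the internal-pairing events at different BFS steps are not independent, and one must verify that the conditional bound $O(|B_r(v)|/n)$ holds uniformly in the exploration history so that summing over pairs of steps gives $O(|B_r(v)|^4/n^2)$ rather than being polluted by correlations. The resulting threshold $\alpha<1/4$ is known to be sharp: at $\alpha\ge 1/4$ one expects $\Omega(1)$ vertices whose balls contain multiple cycles, so no cheaper moment argument will do.
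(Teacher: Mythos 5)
The paper does not prove this fact; it simply notes it is classical and cites \cite[Lemma 2.1]{LuSly-Cutoff-RRG}. Your proposal supplies the standard argument that this citation points to, and it is correct: passing to the configuration model (valid because the simplicity probability is bounded away from $0$ for fixed $d$), exploring $B_r(v)$ by BFS, identifying the number of cycles with the number $Z$ of internal pairings, bounding the conditional probability of each internal pairing by $O(|B_r(v)|/n)$ uniformly over the history (since at most $d|B_r(v)|$ of the $\ge dn/2$ remaining half-edges lie on explored vertices), and then estimating $\mathbb{E}[Z(Z-1)] \le C_d |B_r(v)|^4/n^2 \le C_d n^{4\alpha-2}$ by summing the product of two such conditional bounds over ordered pairs of BFS steps. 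Markov plus a union bound over $v$ then gives failure probability $O(n^{4\alpha-1})=o(1)$ for $\alpha<1/4$, and the ``in particular'' clause follows by taking $\alpha=\tfrac{\log(d-1)}{4\log d}<\tfrac14$ so that $R=\tfrac14\log_d n=\alpha\log_{d-1}n$. The one step you flag as needing care --- that the conditional internal-pairing bound holds uniformly so the sum over pairs factorizes --- does go through exactly as you describe, by conditioning on the full history before each of the two steps and applying the bound to the later one; no additional correlation correction is needed. This is essentially the same route the cited lemma takes, so your proposal is a faithful reconstruction of the omitted proof.
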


Recall the minus spacetime clusters and minus region defined per Definitions~\ref{def:minus-spacetime-clusters}--\ref{def:minus-region} with respect to the two-spin dynamics $(X_t^+)_{t\ge 0}$ initialized from all-plus. 

\begin{definition}[Trifurcation point] For a set $A$, a vertex $v\in A$ is called a \emph{trifurcation point} of $A$ if at least three of the connected components of $B_{R}(v)\setminus \{v\}$ have non-empty intersection with $A$.  We denote this by $v\in \mathsf{Tri}(A)$.  
\end{definition}

We now define the variant of the chain $(X_s^+)_{s\ge 0}$ which ignores updates that would flip a trifurcation point to a plus. This variant will be denoted $(\widetilde X_s)_{s\ge 0}$ and it is naturally incorporated into the coupling of Definition~\ref{def:grand-coupling} using the same $(\mathcal T,\mathcal U)$ sequence, so by Remark~\ref{rem:spacetime-cluster-construction-general-init} it has its own minus spacetime clusters and minus regions, which we will denote $\widetilde \cC_{v,\le t}$ and $\widetilde R_{t}(v)$ respectively.  

\begin{definition}\label{def:rigid-dynamics}
    The \emph{rigid dynamics} $(\widetilde X_t)_{t\ge 0}$ is the variant of $(X_t^+)_{t\ge 0}$ initialized from $\widetilde X_0 \equiv +1$, which, if it goes to update a vertex $v$ at time $t$, rejects the update if $v\in \mathsf{Tri}(\widetilde \cR_{t^-}(v))$. 
\end{definition}

\begin{remark}
    Notice that $(\widetilde X_t)_{t\ge 0}$ is still a jump stochastic process with a.s.\ distinct jump-times and with transition rates at time $t$ that are measurable with respect to $\cF_{t^-}$. However, note that it is \emph{not} Markovian, as whether a minus vertex $v$ is a trifurcation point or not depends on the entire history of its minus spacetime cluster. 
\end{remark}

Observe that in order for $v\in \mathsf{Tri}(\wR_{t^-}(v))$, it must be that $\widetilde \cR_{t^-}(v) \ne \emptyset$ so $\widetilde X_t(v) = -1$. Thus the rigid dynamics is only rejecting updates that would flip minus sites to plus sites, and by monotonicity of the two-spin dynamics' update rules and the trifurcation property, one has \begin{align}\label{eq:rigid-below-normal-dynamics}\widetilde X_t \le X_t^+\,, \qquad \text{for all $t\ge 0$}\,.\end{align}
Notice also, that the status of $v$ as a trifurcation point is $\widetilde{\mathcal C}_{v,\le t}$-measurable.

The following shows that in any set smaller than $R$, the number of trifurcation points is at most half the vertices in the set, so that in any minus region of size smaller than $R$, the rigid dynamics can always flip at least half of the vertices in $\widetilde\cR_{t}(v)$ to plus. This property of trifurcation points is what prevents us from only blocking flips at ``bifurcation points". 

\begin{lemma}\label{lem:size:trifurcation}
    Fix any $d\ge 3$. If $G$ is $1$-locally-treelike, then the number of trifurcation points in any set $A$ of diameter less than $R$ is at most $|A|/2$.
\end{lemma}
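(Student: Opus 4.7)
The plan is to reduce the problem to a Steiner-tree degree count inside a single local ball containing $A$, then bound trifurcation points by the number of high-degree vertices in that Steiner tree.

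\textbf{Localization and tree reduction.} First, I fix any $v_0 \in A$. Since $\operatorname{diam}(A) < R$, every $w \in A$ satisfies $d_G(v_0,w) < R$, so $A \subseteq B_R(v_0) =: H$; by the $1$-locally-treelike hypothesis, $H$ contains at most one cycle. I then take a spanning tree $T_H$ of $H$ (removing one cycle edge if $H$ is unicyclic), and let $S \subseteq T_H$ be the minimal subtree containing $A$. By minimality every leaf of $S$ belongs to $A$.

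\textbf{Degree counting on the Steiner tree.} Set $t := \#\{v \in A : \deg_S(v) \geq 3\}$ and $\ell := \#\mathrm{leaves}(S)$, so that $\ell \leq |A|$. The tree handshake identity gives $\#\{x \in V(S) : \deg_S(x) \geq 3\} \leq \ell - 2$, hence $t \leq \ell - 2$, i.e.\ $\ell \geq t+2$. Partitioning $A = V(S) \cap A$ into leaves, internal degree-$2$ vertices, and internal degree-$\geq 3$ vertices of $S$, we obtain $|A| \geq \ell + t \geq (t+2) + t$, yielding $t \leq (|A|-2)/2 \leq |A|/2$.

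\textbf{Trifurcation implies Steiner-tree degree $\geq 3$.} The remaining step is to argue that each trifurcation point $v \in A$ satisfies $\deg_S(v) \geq 3$, so the bound on $t$ translates to the bound the lemma claims. Given three $A$-vertices $w_1, w_2, w_3$ in three distinct components of $B_R(v) \setminus \{v\}$, I look at the $S$-paths from $v$ to each $w_i$ and at their initial edges $(v, u_i)$ out of $v$. If the $u_i$ are all distinct, then $\deg_S(v) \geq 3$ immediately. Otherwise two of them coincide, say $u_1 = u_2$, and the $w_1$-to-$w_2$ $S$-path does not pass through $v$; this gives a path connecting $w_1$ and $w_2$ inside $S \setminus \{v\} \subseteq H \setminus \{v\}$, and one then wants to conclude this path also lies inside $B_R(v) \setminus \{v\}$, contradicting the choice of the $w_i$.

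\textbf{Main obstacle.} The main obstacle is this final implication: an $S$-path between two vertices of $A$ certainly lies in $H = B_R(v_0)$, but a priori can stray up to distance $\sim 2R$ from $v$ since $v_0$ is only guaranteed to be within distance $<R$ of $v$. To handle this, the plan is to exploit the fact that paths in the Steiner tree $S$ between vertices of $A$ are (near-)geodesics of $H$, combined with the diameter bound $\operatorname{diam}(A) < R$, so that the path cannot stray far beyond a radius-$\operatorname{diam}(A)$ neighborhood of $A$ and in particular remains within $B_R(v)$. The unicyclic case, especially when the cycle of $H$ passes through $v$, requires separate bookkeeping on which edge of the cycle is removed to build $T_H$, and this is where the ``at most one cycle'' hypothesis is used essentially.
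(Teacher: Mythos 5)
Your Steiner-tree/handshake route is a genuinely different counting argument from the paper's, which instead fixes a root $\rho \in A$, declares $u$ a child of $v$ when $v$ is the first $A$-vertex on the $\mathbb{T}_d$-path from $u$ towards $\rho$, and observes that every trifurcation point has at least two children, giving two disjoint injections into $A$. Both arguments ultimately count high-branching vertices in a tree, and your handshake arithmetic (leaves minus two bounds degree-$\ge 3$ vertices, then partition $A$ by $S$-degree) is correct.

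The gap is exactly the step you flag: that a trifurcation point $v$ has $\deg_S(v) \ge 3$, and your proposed resolution does not close it. You take $S$ inside a spanning tree $T_H$ of $B_R(v_0)$, but the trifurcation hypothesis involves components of $B_R(v)\setminus\{v\}$ for a \emph{different} center $v$. Your bound that the $T_H$-path between $w_1,w_2 \in A$ stays within distance $\diam(A)$ of $A$ only places it in $B_{2R-2}(v)$, not $B_R(v)$, since $A$ itself is only contained in $B_{R-1}(v)$. The "(near-)geodesic" claim does not rescue this, because $d_{T_H}(v,w_i)$ can exceed $d_G(v,w_i)$: the $G$-geodesic from $v$ to $w_i$ may leave $B_R(v_0)$ when both endpoints sit near that ball's boundary. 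When $B_R(v_0)$ is an honest tree one can push your argument through — the $T_H$-path from $v$ to any $w_i\in A$ is then a $G$-geodesic of length $\le R-1$, so every vertex of the $w_1$-to-$w_2$ path in $T_H$ is within $T_H$-distance $\max_i d_{T_H}(v,w_i) \le R-1$ of $v$, hence in $B_{R-1}(v)$ — but the unicyclic case, where removing the cycle edge can inflate $d_{T_H}(v,\cdot)$ beyond $R-1$, is precisely the "separate bookkeeping" you defer and the place where the argument as written is incomplete.

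The paper sidesteps all of this by first reducing to the infinite tree $\mathbb{T}_d$: after removing the cycle edge, $B_R(\rho)$ becomes a bounded-degree tree that embeds into $\mathbb{T}_d$, and then one only ever speaks of subtrees of $\mathbb{T}_d\setminus\{v\}$, noting that $\diam(A) < R$ forces $A \subseteq B_{R-1}(v)$, so a subtree meets $A$ iff its truncation to $B_R(v)$ does. The radius $R$ then drops out of the counting entirely. If you make the same reduction and take the Steiner tree of $A$ inside $\mathbb{T}_d$ instead of inside $T_H$, your degree-count argument becomes immediate: $\deg_S(v)$ is precisely the number of $\mathbb{T}_d$-branches at $v$ that contain a vertex of $A$, which equals the number of components of $B_R(v)\setminus\{v\}$ meeting $A$.
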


\begin{proof}
    Fix an arbitrary vertex $\rho \in A$ to be a `root'. Note that since $\diam(A)\le R$, we have $A\subseteq B_R(\rho)$, which contains at most $1$ cycle. Moreover, removing an edge from $B_R(\rho)$ only increases the number of connected components of $B_{R}(v)\setminus \{v\}$ that have a non-empty intersection with $A$. Thus, the number of trifurcation points of $A$ in $G$ is at most that number in $G'$, the graph where one edge is removed from $B_R(\rho)$ to make it a tree.
    
    Therefore, it suffices to show that if $A$ is a subset of $\mathbb T_d$, the $d$-regular tree for $d\ge 3$, the number of trifurcation points in $A$ is at most $|A|/2$. Associate to $A$ a tree $T_A$ rooted at $\rho$, where we define a vertex $u\in A$ to be a `child' of $v\in A$ if $v$ is the first vertex in $A$ encountered along the path in $\mathbb T_d$ from $u$ to $\rho$. Notice that every trifurcation point in $A$ has at least two distinct children in $A$. Moreover, a vertex $u\in A$ cannot be a child of two distinct vertices in $A$ by our definition. Thus, there are two injective maps $\Phi_1$ and $\Phi_2$ that take as input the trifurcation points $u\in \mathsf{Tri}(A)$ and output two distinct children of $u$. Their images are disjoint, and the union of their images is a subset of $A$, so $|A| \ge 2|\mathsf{Tri}(A)|$. 
\end{proof}

We also make a few structural observations about the $1$-neighborhoods of certain vertices. These observations will play a crucial role in our drift analysis of minus spacetime clusters in the subsequent sections.

\begin{observation}\label{obs:minus-four-neighbors-tripoint}
    Suppose $G$ is $1$-locally-treelike. If $\widetilde X_t(v)=-1$ and  $v\notin \mathsf{Tri}(\widetilde \cR_{t}(v))$, then $v$ has at most three neighbors which are minus in $\widetilde X_t$. 
\end{observation}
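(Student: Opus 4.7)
The plan is to combine the structural hypothesis from $v \notin \mathsf{Tri}(\widetilde{\mathcal R}_t(v))$ with a cyclomatic bookkeeping on $B_R(v)$. First, I would note that every minus neighbor $w$ of $v$ in $\widetilde X_t$ must lie in $\widetilde{\mathcal R}_t(v)$: since $\widetilde X_t(v) = \widetilde X_t(w) = -1$ and $w \sim v$, the pair $\{v,w\}$ forms a connected minus set at time $t$, and by Observation~\ref{obs:properties-of-regions} any connected set of minuses containing $v$ is contained in the region $\widetilde{\mathcal R}_t(v)$.

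Next, unpacking the definition of trifurcation point, the hypothesis $v \notin \mathsf{Tri}(\widetilde{\mathcal R}_t(v))$ means that at most two distinct connected components of $B_R(v) \setminus \{v\}$ have non-empty intersection with $\widetilde{\mathcal R}_t(v)$. Since every minus neighbor of $v$ is in $\widetilde{\mathcal R}_t(v)$ and also lies in some connected component of $B_R(v) \setminus \{v\}$, all minus neighbors of $v$ are partitioned among at most two such components $C_1, C_2$. Writing $k_i$ for the number of neighbors of $v$ in $C_i$, the target bound reduces to showing $k_1 + k_2 \le 3$.

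For this last step I would use $1$-local-treelikeness of $G$ as follows. If a single connected component $C$ of $B_R(v) \setminus \{v\}$ contains $k$ neighbors of $v$, then the subgraph of $B_R(v)$ consisting of $v$, the $k$ edges from $v$ into $C$, and any spanning tree of $C$ contributes $k-1$ independent cycles through $v$ to the cyclomatic complexity of $B_R(v)$. Consequently: two neighbors of $v$ in the same component force one cycle, three in the same component force two independent cycles, and two in each of two distinct components also force two independent cycles. Since $G$ is $1$-locally-treelike, $B_R(v)$ contains at most one cycle, so the latter two scenarios are impossible. Therefore at most one of $C_1, C_2$ contains more than one neighbor of $v$, and that one contains at most two, yielding $k_1 + k_2 \le 2 + 1 = 3$.

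I do not expect any genuine obstacles: once the trifurcation hypothesis is translated into the two-component containment and the number of neighbors of $v$ per component is controlled via cyclomatic complexity, the arithmetic is immediate. The only minor subtlety to record carefully is that $\widetilde{\mathcal R}_t(v)$ is the correct minus region to invoke (not just the set of minuses adjacent to $v$), which is why the appeal to Observation~\ref{obs:properties-of-regions} at the start is essential.
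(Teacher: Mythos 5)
Your proof is correct and takes essentially the same approach as the paper: both hinge on (i) every minus neighbor of $v$ lying in $\widetilde{\cR}_t(v)$ via Observation~\ref{obs:properties-of-regions}, (ii) the trifurcation hypothesis bounding the number of components of $B_R(v)\setminus\{v\}$ meeting $\widetilde{\cR}_t(v)$, and (iii) the $1$-locally-treelike assumption. The only cosmetic difference is that the paper argues by contradiction (four minus neighbors must land in at least three components of $B_R(v)\setminus\{v\}$ because that set has at least $d-1$ components), whereas you argue the contrapositive and make the cyclomatic bookkeeping explicit ($k_1+k_2-2\le 1$); these are equivalent consequences of $B_R(v)$ having at most one cycle.
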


\begin{proof}
    Suppose by contradiction that $v$ has four (or more) neighbors which are minus in $\widetilde X_t^+$. Because $B_R(v)$ has at most $1$ cycle, the graph $B_R(v)\setminus \{v\}$ has at least $d-1$ connected components, and the four neighbors of $v$ which are minus in $\widetilde X_t^+$ are in at least $3$ distinct connected components $C_1, C_2,C_3$ of the set $\{w\in B_R(v)\setminus \{v\}: \widetilde X_t^+(w) = -1\}$. Recalling from Observation~\ref{obs:properties-of-regions} that connected sets of minuses are subsets of a single minus region, $C_1\cup C_3\cup C_3 \cup \{v\}$ is a connected set of minuses, and therefore by Observation~\ref{obs:properties-of-regions}, a subset of $\widetilde{\cR}_{t}(v)$ implying that $v\in \mathsf{Tri}(\widetilde\cR_{t}(v))$. 
\end{proof}

In addition to the minus spacetime clusters of the rigid dynamics, $\wC_{v,\le t}$ and their induced minus regions $\widetilde \cR_{t}(v)$, recall the projection of the spacetime cluster, i.e., all vertices that at some time belong to $\wC_{v,\le t}$, which we denote $\wC_t(v) \subset V$.
\begin{observation}\label{obs:minus-four-neighbors-outside-part}    Suppose $|\wC_{t}(u)|\le R$ and $G$ is $1$-locally-treelike. If $\widetilde X_t(v)=+1$, but $v$ is adjacent to a minus region $\widetilde {\cR}_{t}(u)$, then at most three neighbors of $v$ are in $\widetilde \cR_{t}(u)$. 
\end{observation}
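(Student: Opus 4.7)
The plan is to derive a contradiction via a simple cycle-counting argument that leverages the $1$-locally-treelike hypothesis on $G$. Suppose for contradiction that $v$ has four or more neighbors $w_1, w_2, w_3, w_4 \in \widetilde{\cR}_t(u)$. Since $\widetilde{\cR}_t(u) \subseteq \wC_t(u)$ by construction (the minus region at time $t$ is the time-$t$ slice of the spacetime cluster, whose vertex projection is $\wC_t(u)$), all four $w_i$ lie in $\wC_t(u)$.

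First I would locate everything inside $B_R(v)$. Recall from~\eqref{eq:C-t-v} that $\wC_t(u)$ is $G$-connected, being the vertex projection of the spacetime-connected region $\wC_{u,\le t}$. Since $|\wC_t(u)| \le R$ by hypothesis, its diameter in $G$ is at most $R - 1$. Because $v$ is adjacent to $\widetilde{\cR}_t(u) \subseteq \wC_t(u)$, every vertex of $\wC_t(u)$ lies at graph distance at most $|\wC_t(u)| \le R$ from $v$, so $\wC_t(u) \cup \{v\} \subseteq B_R(v)$.

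Next I would count edges and vertices in the induced subgraph $H := G[\wC_t(u) \cup \{v\}]$. Since $\wC_t(u)$ is $G$-connected, the subgraph $G[\wC_t(u)]$ contains a spanning tree on $|\wC_t(u)|$ vertices, contributing at least $|\wC_t(u)| - 1$ edges to $H$. Additionally, $v$ contributes at least four edges to $H$, namely the edges $\{v, w_i\}$ for $i=1,2,3,4$. Therefore $|E(H)| \ge |\wC_t(u)| + 3$ while $|V(H)| = |\wC_t(u)| + 1$. Since $H$ is connected, its cyclomatic number satisfies
\begin{equation*}
    |E(H)| - |V(H)| + 1 \ge (|\wC_t(u)| + 3) - (|\wC_t(u)| + 1) + 1 = 3.
\end{equation*}
However, $H \subseteq B_R(v)$, and by Definition~\ref{def:1-treelike} the ball $B_R(v)$ has at most one cycle; hence the cyclomatic number of $H$ is at most $1$. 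This is the desired contradiction, so $v$ has at most three neighbors in $\widetilde{\cR}_t(u)$.

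There is no genuine obstacle here: the whole argument is a one-line Euler characteristic computation once the inclusion $\wC_t(u) \cup \{v\} \subseteq B_R(v)$ is established, and that in turn follows immediately from the size bound $|\wC_t(u)| \le R$ and the connectivity of $\wC_t(u)$ in $G$. The only subtlety worth flagging is that one must use $\wC_t(u)$ (the full vertex projection, which is $G$-connected) rather than $\widetilde{\cR}_t(u)$ itself (whose disconnectedness was noted after Definition~\ref{def:minus-region}) when bounding the edge count of the spanning tree piece.
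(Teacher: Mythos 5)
Your Euler-characteristic argument is correct as far as it goes, but it only treats the case $v \notin \wC_t(u)$, and that case is actually the easy one. The observation hypothesis gives $\widetilde X_t(v) = +1$, hence $v \notin \widetilde{\cR}_t(u)$, but it does \emph{not} rule out $v \in \wC_t(u)$ (the vertex projection includes every vertex that was ever in the spacetime cluster, and $v$ may well have been minus at some earlier time and later flipped to plus). When $v \in \wC_t(u)$, your count breaks: $|V(H)| = |\wC_t(u)|$, not $|\wC_t(u)|+1$, and the four edges $\{v,w_i\}$ are not ``additional'' --- they can all be spanning-tree edges. For instance, if $\wC_t(u)$ restricted to a neighborhood of $v$ is locally a star centered at $v$, the cyclomatic number of $H$ is $0$ and no contradiction results. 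In fact, the paper's proof first runs essentially your cycle-count argument to establish precisely that $v \in \wC_t(u)$ \emph{must} hold, and then the bulk of the work is handling that case.

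This remaining case is where the rigid dynamics is essential, and it is the real content of the observation: it is \emph{false} for the ordinary two-spin dynamics, which is why the rigid dynamics was introduced. The paper's argument is: since $v\in\wC_t(u)$ and $\widetilde X_t(v)=+1$, there is a last time $t_1\le t$ at which $v$ flipped from $-1$ to $+1$; the rigid dynamics only permits this if $v$ was not a trifurcation point at $t_1^-$, so at most two of the components of $B_R(v)\setminus\{v\}$ met $\widetilde\cR_{t_1^-}(v)$ and (by Observation~\ref{obs:minus-four-neighbors-tripoint}) at most three neighbors of $v$ were minus at $t_1$. For a fourth neighbor $w$ in a new component of $B_R(v)\setminus\{v\}$ to land in $\widetilde\cR_t(u)$, the spacetime cluster would have to connect $w$ to the old components during $[t_1,t]$ while avoiding $\{v\}\times[t_1,t]$ (since $v$ stays plus), forcing a path in $G\setminus\{v\}$ of length $\ge R$, which violates $|\wC_t(u)|\le R$. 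Your proposal is missing this dynamical argument entirely; a purely static graph-combinatorial approach cannot close the gap because the conclusion genuinely depends on the ``no flipping trifurcation points'' rule.
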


\begin{proof}
    Suppose by contradiction that $v$ is incident to four (or more) vertices in a region $\widetilde{\cR}_{t}(u)$. Then, we first claim that $v\in \wC_t(u)$: suppose that $v\notin \wC_t(u)$. Since $\wC_t(u)$ must be connected and $|\wC_t(u)|\leq R$, and because $v$ has $4$ distinct neighbors in $\wC_t(u)$, it follows that $v$ is contained in at least $4$ cycles of length at most $R+1$. This contradicts the assumption that $G$ is $1$-locally-treelike per Definition~\ref{def:1-treelike}. Thus, $v\in \wC_t(u)$ holds.
    
    Let $t_1>0$ be the most recent time $v$ flipped from minus to plus under the rigid dynamics. This is well-defined since $v\in \wC_t(u)$. At time $t_1^-$, $v$ must not have been a trifurcation point of $\widetilde \cR_{t_1^-}(v)$, as the rigid dynamics does not permit flipping trifurcation points. Thus, at most $2$ of the connected components of $B_R(v) \setminus \{v\}$ intersected $\widetilde {\cR}_{t_1^-}(v)$, and at most $3$ of the neighbors of $v$ were minus at time $t_1$ by Observation~\ref{obs:minus-four-neighbors-tripoint}. 

    In order for a fourth of $v$'s neighbors, call it $w$, to be in $\wR_{t}(u)$, it must therefore be in a third connected component of $B_R(v)\setminus \{v\}$ distinct from the $2$ that intersected  $\widetilde {\cR}_{t_1^-}(v)$. This requires a spacetime connected subset $U$ of $\widetilde \cC_{w,\le t} \cap (G \times [t_1,t])$ connecting $(z,t_1)$ and $(w,t)$ for some $z$ in a distinct connected component of $B_R(v)\setminus \{v\}$ from $w$. Moreover, since $v$ is plus at all times in $[t_1,t]$, $U$ may not intersect $\{v\}\times [t_1,t]$. 
 This implies that $\widetilde C_{t}(u)$ must contain a path connecting $z$ to $w$ in $G\setminus \{v\}$ which by the $1$-locally-treelike property, must have length at least $R$. 
\end{proof}

\section{Exponential tails on minus spacetime clusters and regions}\label{sec:exp-tails-minus-spacetime-regions}
In this section, we establish the core estimate that controls minus spacetime regions along the evolution of the rigid dynamics chain initialized from all-plus. 

Let $(\psi_k)_{k\geq 0}$ be the probability mass function defined as 
\begin{align}\label{eq:psi_k}
\psi_k:= k^{-2}d^{-1000-100 k }\,,\qquad k\geq 1\,,\qquad \psi_0:=1-\sum_{k\geq 1} \psi_k\,.
\end{align}
Let $\Psi$ be the law of such a random variable. The precise value of the constants $1000, 100$ will not be important, and they may be replaced by sufficiently large universal constants $\Gamma_0,\Gamma_1$ such that $100\le \Gamma_1\le \Gamma_0/10$, at the cost of enlarging the constant $C_0$ in the bound $\beta > \frac{C_0\log d}{d}$.

The goal of this section is to establish the following exponential tail bound on the probabilities of minus regions under the dynamics initialized from all-plus, so long as they are of size at most $R$ which we recall from~\eqref{eq:R} to be $\frac{1}{4} \log_{d} n$. 

\begin{proposition}\label{prop:exp-tails-minus-regions}
    Let $d\geq 7, \beta>\frac{C_0\log d}{d}$, and suppose $G$ is $1$-locally-treelike. Consider $(\widetilde X_t)_{t\ge 0}$, the rigid dynamics initialized from all-plus. For every $u\in V$, and integers $1\le k\le \ell \le R$, we have
    \begin{align*}
        \mathbb P\Big( |\wR_t(u)| \ge k,\,|\wC_t(u)|=\ell \Big) \le  \psi_k\cdot \psi_{\ell} \,.
    \end{align*}
\end{proposition}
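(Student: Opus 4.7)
The plan is to prove the joint bound via induction on the cluster-projection size $\ell$, combined with a first-passage continuity argument in time for the PMF
$$\rho_t(k,\ell) := \mathbb{P}(|\wR_t(u)| \ge k,\, |\wC_t(u)| = \ell).$$
Since $\widetilde X_0 \equiv +1$ gives $\rho_0 \equiv 0$, I would define $t^\star := \inf\{t : \rho_t(k,\ell) = \psi_k\psi_\ell \text{ for some } 1 \le k \le \ell \le R\}$ and aim to derive a contradiction by showing $\frac{d}{dt}\rho_t(k,\ell)\big|_{t=t^\star} < 0$. The base case $\ell = 1$ forces all neighbors of $u$ to be plus whenever $u$ is minus, so condition (B) gives a flip-to-minus rate at most $e^{-2\beta d/7} \le d^{-2C_0/7}$ and reciprocal return rate at least $1-o(1)$; a stationary birth--death estimate then yields $\rho_t(1,1) \le d^{-2C_0/7} \le \psi_1^2$ for $C_0$ large.

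For the inductive step at $\ell$, I would decompose $\frac{d}{dt}\rho_t(k,\ell)$ into a \emph{shrinkage} contribution (clock rings at vertices in $\wR_t(u)$ flipping minus to plus) and a \emph{growth} contribution (clock rings at adjacent plus vertices flipping to minus, possibly merging auxiliary minus regions into $\wC_{u,\le t}$). The shrinkage contribution is strictly negative: by Lemma~\ref{lem:size:trifurcation} at least $k/2$ vertices of $\wR_t(u)$ are non-trifurcation (hence eligible to flip under the rigid dynamics), and by Observation~\ref{obs:minus-four-neighbors-tripoint} each has at most $3 \le 3d/7$ minus neighbors, so condition (B) pushes its flip-to-plus rate to at least $1-o(1)$. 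This yields shrinkage drift $-\Omega(k)\rho_t(k,\ell)$.

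The growth contribution is the delicate part. By Observation~\ref{obs:minus-four-neighbors-outside-part}, a plus vertex $v$ adjacent to $\wR_t(u)$ has at most $3$ of its neighbors in $\wR_t(u)$. So either (i) $v$ has at least $4d/7$ plus neighbors, in which case condition (B) forces its flip-to-minus rate below $d^{-2C_0/7}$, or (ii) $v$ has a minus neighbor coming from an auxiliary minus region $\wR_t(w)$ disjoint from $\wC_{u,\le t}$. Case (ii) is controlled by applying the inductive hypothesis to the auxiliary region, whose cluster-projection size is strictly less than the post-merge value $\ell$. Summing over the possible configurations of auxiliary region sizes and exploiting the super-exponential decay $\psi_k = k^{-2}d^{-1000-100k}$, the total growth drift is bounded by $[d^{-2C_0/7} + d^{O(1)}\psi_1]\cdot\psi_k\psi_\ell$, which for $C_0$ sufficiently large is dominated by the shrinkage drift $-\Omega(k)\psi_k\psi_\ell$.

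The main obstacle is that the auxiliary minus regions in case (ii) are correlated with $\wC_{u,\le t}$ through the shared Poisson clocks of the grand coupling, so the inductive bound cannot be applied as a marginal probability naively. The resolution is to exploit the internal measurability of spacetime clusters (Observation~\ref{obs:spacetime-cluster-internally-measurable}): conditioning on $\wC_{u,\le t}$ only reveals information internal to or on its spacetime boundary, so on the spatial complement the rigid dynamics remains stochastically dominated by an unconditional copy, to which the inductive PMF bound applies. Implementing this conditional-coupling step carefully, and tracking the joint $(k,\ell)$-indexing through the recursion, is where the temporal argument becomes genuinely \emph{delicate} as foreshadowed in the introduction.
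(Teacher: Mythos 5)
The high-level plan is correct and matches the paper's architecture: a first-passage continuity argument in time, decomposition of the drift into shrinkage (non-trifurcation points flipping to $+1$) and growth (adjacent plus vertices merging in auxiliary minus regions), and the use of Lemma~\ref{lem:size:trifurcation} and Observations~\ref{obs:minus-four-neighbors-tripoint},~\ref{obs:minus-four-neighbors-outside-part}. You also correctly intuit that internal measurability of spacetime clusters (Observation~\ref{obs:spacetime-cluster-internally-measurable}) and a monotone comparison argument are needed to handle the correlation between $\wC_{u,\le t}$ and the auxiliary regions. However, there is a genuine gap in the step you flag as ``implementing this conditional-coupling step carefully.''

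The difficulty is that stochastic domination, as you invoke it, can only transfer bounds on \emph{monotone} (tail) events to the conditional law: conditioning on $\{\wC_{u,\le t}=C\}$ makes the complement more plus, so
$\P(|\wR_{t}(w)|\ge j,\,|\wC_t(w)|\ge j'\mid \wC_{u,\le t}=C)\le \P(|\wR_{t}(w)|\ge j,\,|\wC_t(w)|\ge j')$.
But the statement you are proving by recursion is a bound on the non-monotone event $\{|\wC_t(w)|= j'\}$, and domination does not pass through equalities (nor through differences of two tail bounds). If you relax to the tail $\{|\wC_t(w)|\ge j'\}$, the sum picks up contributions from $|\wC_t(w)|>R$, precisely the regime where the $1$-locally-treelike geometry and hence Observations~\ref{obs:minus-four-neighbors-tripoint},~\ref{obs:minus-four-neighbors-outside-part} fail, so the recursion cannot close. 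This is exactly the obstruction the paper isolates in Remark~\ref{rmk:proof-conditional}. The paper's resolution is to strengthen the inductive hypothesis into a fully conditional form (Proposition~\ref{prop:exp-tail-minus-regions-conditional}): the PMF bound $\P(|\wR_t(u)|\ge k,\,|\wC_t(u)|=\ell\mid\wEE)\le\psi_k\psi_\ell$ is carried simultaneously over all admissible conditionings on disjoint spacetime clusters, so that the auxiliary regions can be revealed one at a time via the nested $\sigma$-algebras (Lemma~\ref{lem:general-tail-bound-domination-by-iid-Z}) and the equality on $|\wC|$ is never lost; Lemma~\ref{lem:conditional-flip-rate} then supplies the requisite monotonicity of conditional flip rates. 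Without this conditional reformulation, the recursion you sketch does not close.

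Two smaller issues. First, your ``induction on $\ell$'' is not quite how the recursion must be organized: the $j'=0$ incoming flux (a vertex re-entering its own cluster $\wC_t(u)$) references the same $\ell$ with smaller $k$, so at minimum a lexicographic ordering on $(\ell,k)$ is needed; the paper instead runs a single stopping time $T$ over all $(u,k,\ell)$ simultaneously. Second, your base case $\ell=1$ as a ``stationary birth--death estimate'' is a heuristic rather than a proof: the influx rate into $\{\wC_t(u)=\{u\}\}$ depends on the states of $u$'s neighbors at the flip time, which are governed by the very auxiliary regions the induction is trying to control, so even the base case requires the conditional machinery.
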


Notice that since $\wR_t(u) \subset\wC_{t}(u)$, this only counts minus regions of sizes between $k$ and $R$. 

Before proceeding to the proof of Proposition~\ref{prop:exp-tails-minus-regions}, we state the following lemma which summarizes the crucial properties of the tail probabilities, and convolution properties for $d$ independent copies from the random variable $Z\sim \Psi^{\otimes 2}$. Its proof consists of explicit calculations involving exponential tilting of $\Psi^{\otimes 2}$. Thus, we defer the proof to Section~\ref{sec:deferred-proof-of-psi-bounds}. We say $Z = (Z^{(1)}, Z^{(2)}) \ge (k,\ell)$ for a vector $Z\in \mathbb N^2$ if $Z^{(1)}\geq k$ and $Z^{(2)}\geq \ell$.

    \begin{lemma}\label{lem:psi-tail-bound}
       Fix $d\geq 7$ and let $(Z_i)_{i\geq 1}\stackrel{i.i.d.}{\sim}\Psi^{\otimes 2}$. The following holds for all $k,\ell \ge 0$ and $q\leq d$.
       \begin{enumerate}
           \item [(a)]$\P(\sum_{i=1}^{q}Z_i \ge (k,\ell)) \leq 2 q^2 \psi_k \psi_\ell$.
           \item[(b)] $\P(J\ge 2, \sum_{i=1}^q Z_i \ge (k,\ell))\leq d^{-20}\psi_{k+1} \psi_{\ell+1}$, where $J=\sum_{i=1}^{q}\one\{Z_i\geq (1,1)\}$.
       \end{enumerate}
    \end{lemma}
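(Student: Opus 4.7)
The plan is to reduce both parts to convolution estimates for a single coordinate of $\Psi$, using the fact that under $\Psi^{\otimes 2}$ the two coordinates are independent. For part (a), independence gives
\[
\P\Big(\sum_{i=1}^q Z_i \ge (k,\ell)\Big) \;=\; \P\Big(\sum_{i=1}^q Z_i^{(1)} \ge k\Big) \cdot \P\Big(\sum_{i=1}^q Z_i^{(2)} \ge \ell\Big),
\]
so it suffices to prove a single-coordinate estimate $\P(\sum_{i=1}^q S_i \ge k) \le C q\,\psi_k$ for $S_i \stackrel{iid}{\sim}\Psi$, $k\ge 1$, and $q\le d$, with a constant $C = 1 + o_d(1) \le \sqrt{2}$; squaring this bound produces the desired $2 q^2 \psi_k \psi_\ell$.

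I would establish the single-coordinate estimate by induction on $q$. The two workhorses are: (i) a tail bound $\sum_{j\ge k}\psi_j \le (1+2d^{-100})\psi_k$, which follows from the ratio $\psi_{k+1}/\psi_k \le d^{-100}$ and geometric summation; and (ii) a convolution bound $\sum_{j=1}^{k-1}\psi_j \psi_{k-j} \le 16\,d^{-1000}\psi_k$, obtained from $\psi_a\psi_b/\psi_{a+b} = ((a+b)^2/(a^2b^2))\,d^{-1000}$ combined with the estimate $(a+b)^2/(a^2b^2) \le 4/\min(a,b)^2$ for $a,b\ge 1$ together with $\sum j^{-2} < 2$. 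The inductive step decomposes
\[
\P\Big(\sum_{i=1}^q S_i \ge k\Big) = \psi_0\, \P\Big(\sum_{i=1}^{q-1} S_i \ge k\Big) + \sum_{j=1}^{k-1}\psi_j\, \P\Big(\sum_{i=1}^{q-1}S_i \ge k-j\Big) + \sum_{j\ge k}\psi_j,
\]
applies the inductive hypothesis in the first two groups, and uses (i)--(ii) for the tail and middle sums respectively. Tracking the error $\epsilon_q$ in $\P(\cdot)\le (1+\epsilon_q)q\psi_k$ yields a telescoping recursion of the form $q\epsilon_q - (q-1)\epsilon_{q-1} \le 2d^{-100} + 16(q-1)d^{-1000}$, whence $\epsilon_Q \le 2d^{-100} + 8(Q-1)d^{-1000}$ and thus $C \le \sqrt{2}$ uniformly for $Q\le d$ and $d\ge 7$.

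For part (b), the plan is to union-bound over the $\binom{q}{2}$ pairs $\{i_1,i_2\}\subset[q]$ for which $Z_{i_1}, Z_{i_2}\ge (1,1)$. Using coordinate-independence, each summand factors as $B_k\cdot B_\ell$ where $B_k := \P(Z_{i_1}^{(1)}, Z_{i_2}^{(1)}\ge 1,\, \sum_{i=1}^q Z_i^{(1)}\ge k)$. I would bound $B_k$ by decomposing on $j = Z_{i_1}^{(1)}+Z_{i_2}^{(1)}\ge 2$: the piece with $j\ge k$ is controlled by summing the convolution estimate (ii) over $j\ge k$, giving $O(d^{-1000})\psi_k$; the piece with $2\le j<k$ uses (ii) for the two special indices followed by the single-coordinate bound on the remaining $q-2$ summands, giving $O(q\,d^{-2000})\psi_k$. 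Converting $\psi_k \le O(d^{100})\psi_{k+1}$ then yields $B_k \le O(d^{-900})\psi_{k+1}$, so $\binom{q}{2}B_k B_\ell \le O(d^{-1798})\psi_{k+1}\psi_{\ell+1}$ for $q\le d$, comfortably smaller than the target $d^{-20}\psi_{k+1}\psi_{\ell+1}$.

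The main conceptual point is that $\Psi$ decays so rapidly (both its tail and its self-convolution) that the sum $\sum Z_i$ is dominated by its single largest term, and the event $\{J\ge 2\}$ forces two independent rare events, providing a massive saving that far outstrips the $d^{-20}$ required. The main technical obstacle is making the constant in part (a) sharp enough, i.e., $C\le \sqrt 2$, since the final statement insists on a factor of exactly $2$; but since the induction accumulates only $O(d^{-100})$ error per step and we need $q\le d$ iterations, this is easily achieved for $d\ge 7$.
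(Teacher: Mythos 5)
Your proposal is correct and, for part (a), is essentially the paper's argument in disguise: both factor $\Psi^{\otimes 2}$ into independent coordinates, reduce to a single-coordinate bound $\P(\sum_{i=1}^q S_i\ge k)\le \sqrt 2\,q\,\psi_k$, and establish it by an induction whose two inputs are the geometric tail $\sum_{j\ge k}\psi_j\le(1+2d^{-100})\psi_k$ and the self-convolution bound $\sum_{j=1}^{k-1}\psi_j\psi_{k-j}\le 16\,d^{-1000}\psi_k$ (the paper packages the latter induction as Lemma~\ref{lem:simple}, stated for the PMF $\P(\sum Y_i=k)\le f_q\psi_k$ rather than the tail; you work with the tail directly, which is a cosmetic difference).

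For part (b), however, you take a genuinely different route. The paper again factors coordinates, reducing to $\P(I_q\ge 2,\sum Y_i\ge k)\le d^{-10}\psi_{k+1}$, and then applies an exponential tilt $\P(\widehat Y=j)\propto \psi_j d^{100j}$ that exactly cancels the exponential decay, leaving a polynomial ($j^{-2}$) tail on which it runs an induction analogous to Lemma~\ref{lem:simple} with the constraint $\widehat I_q\ge 2$ built in. You instead union-bound directly over the $\binom{q}{2}$ choices of which two indices realize $Z_i\ge(1,1)$, condition on $j=Z_{i_1}^{(1)}+Z_{i_2}^{(1)}\ge 2$, and use the self-convolution estimate twice (once for the pair, once — via part (a) — for the remaining $q-2$ terms). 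Both are correct and give far more than the required $d^{-20}$; your version is more elementary and avoids introducing the tilted measure, while the paper's tilt is arguably cleaner if one needed to push to events with three or more non-trivial indices. One small point worth being explicit about in a writeup: part (a) as stated covers $k=0$ or $\ell=0$, which falls outside your single-coordinate bound (stated for $k\ge1$); it is trivially handled since $\sqrt 2\,q\,\psi_0\ge 1$, but deserves a line as in the paper.
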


Further setting up towards proving Proposition~\ref{prop:exp-tails-minus-regions}, let us generalize the notion of $(u,t)$-admissible spacetime cluster.

\begin{definition}
    Given $m$ vertices $\vec{u}=(u_1,\ldots, u_m)\in V^m$ and $m$ times $\vec{t}=(t_1,\ldots, t_m)\in [0,\infty)^m$, we say $\vec{C}=(C_1,\ldots, C_m)$ with $C_i \subset V\times [0,t_i]$ is $(\vec{u},\vec{t})$-admissible if there exists an admissible clock ring sequence $\cT$ (cf. Definition~\ref{def:admissible-clock-process}) such that $\P(\wE_{\vec{u},\vec{t}, \vec{C}}\mid \cT)>0$, where
    \[
   \wE_{\vec{u}, \vec{t},\vec{C}}:=\Big\{\wC_{u_i,\leq t_i}=C_i~~\forall i \in [m]\Big\}.
    \]  
    Note that the event $\wE_{\vec{u}, \vec{t},\vec{C}}$ is measurable w.r.t. the sigma algebra $\widetilde{\cF}_{\vec{u},\vec{t}}$, where
\[
  \wFF=\sigma(\wC_{u_1,\leq t_1}, \ldots, \wC_{u_m,\leq t_m}).
    \]
 
\end{definition}

Proposition~\ref{prop:exp-tails-minus-regions} is a consequence of the following more general conditional version of it, by taking $m=0$. 

\begin{proposition}\label{prop:exp-tail-minus-regions-conditional}
    Let $d\ge 7$, $\beta > \frac{C_0 \log d}{d}$ and suppose $G$ is $1$-locally-treelike. For every $(u,t)\in V\times [0,\infty)$, and integers $1\le k\le \ell \le R$, the following holds. For any integer $m\geq 0$, $\vec{u}=(u_1,\ldots,u_m)\in V^m$, $\vec{t}=(t_1,\ldots, t_m)\in [0,\infty)^m$ such that $t_1\geq t_2\geq \ldots \geq t_m >t$, and $(\vec{u},\vec{t})$-admissible $\vec{C}=(C_1,\ldots, C_m)$, we have 
    \begin{align*}
        \mathbb P\Big( |\wR_t(u)| \geq k, |\wC_t(u)| = \ell \mid \wEE\Big)\cdot \one\Big\{(u,t)\notin \bigcup_{i=1}^{m}C_i\Big\}\le \psi_k \cdot \psi_\ell\,.
    \end{align*}
\end{proposition}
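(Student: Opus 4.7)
The plan is a bootstrap/stopping-time argument on time, combined with a drift analysis of the joint conditional probability mass function of $(|\wR_t(u)|,|\wC_t(u)|)$. For each admissible conditioning $(\vec u,\vec t,\vec C)$ and probe $(u,t)$ with $(u,t)\notin\bigcup_i C_i$, set
\[
q_t(k,\ell):=\P\bigl(|\wR_t(u)|\geq k,\,|\wC_t(u)|=\ell\,\big|\,\wEE\bigr),
\]
and let $T^\ast$ be the infimum of $t\geq 0$ such that the inequality $q_t(k,\ell)\leq \psi_k\psi_\ell$ fails for some admissible data and some $1\leq k\leq \ell\leq R$. Since $q_0\equiv 0$ by the all-plus initialization and the conditional probabilities are continuous in $t$, at $T^\ast$ (if finite) the bound must first saturate for some specific conditioning, probe, and $(k^\ast,\ell^\ast)$. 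The goal is to show $\frac{d^+}{dt}q_t(k^\ast,\ell^\ast)\big|_{T^\ast}\!<0$ at any such saturation, contradicting the definition of $T^\ast$.

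The derivative splits into a loss from $-$-to-$+$ flips inside $\wR_t(u)$ and a gain from $+$-to-$-$ flips at boundary vertices of $\wR_t(u)$. For the loss: on $\{|\wR_t(u)|=k^\ast\}$, Lemma~\ref{lem:size:trifurcation} ensures at least $k^\ast/2$ vertices of $\wR_t(u)$ are non-trifurcation, so the rigid dynamics permits flipping them; Observation~\ref{obs:minus-four-neighbors-tripoint} bounds the number of minus neighbors of each such vertex by three, and property (B) of Definition~\ref{def:general-ising-process} then yields a flip-to-plus rate of at least $1-e^{-2\beta d/7}$. This produces an outflow of at least $\tfrac{k^\ast}{4}\bigl(q_{T^\ast}(k^\ast,\ell^\ast)-q_{T^\ast}(k^\ast+1,\ell^\ast)\bigr)\geq \tfrac{k^\ast}{8}\psi_{k^\ast}\psi_{\ell^\ast}$ for $\beta>C_0\log d/d$, using $\psi_{k+1}/\psi_k=O(d^{-100})$. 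The gain comes from the $\leq d\ell^\ast$ boundary vertices $v\in\partial\wR_t(u)$ and splits into two regimes: either (i) $v$ has at most three minus neighbors in total, in which case Observation~\ref{obs:minus-four-neighbors-outside-part} shows no external region is absorbed and property (B) caps the flip rate at $e^{-2\beta d/7}\leq d^{-2C_0/7}$; or (ii) $v$ has four or more minus neighbors, so at least one lies in a minus region disjoint from $\wC_t(u)$ and the target event is entered only by merging in external regions.

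Regime (ii) is the core of the argument. Using Observation~\ref{obs:spacetime-cluster-internally-measurable}, we augment the conditioning by appending the freshly revealed $\wC_{u,\leq t_{\text{new}}}$ at time $t_{\text{new}}\downarrow t$, which preserves admissibility and the ordering $t_1\geq\cdots\geq t_m>t_{\text{new}}$; we then apply the bootstrap hypothesis at each neighbor $w\sim v$ with $w\notin\wC_t(u)$, valid because such $w$ are probes into the enlarged conditioning and lie outside the appended cluster. This dominates the joint law of the adjacent external spacetime-cluster sizes by independent copies of $\Psi^{\otimes 2}$, upon which Lemma~\ref{lem:psi-tail-bound}(a) bounds the probability that the sum of absorbed sizes pushes $(|\wR|,|\wC|)$ into the target event from configurations below it, and Lemma~\ref{lem:psi-tail-bound}(b) provides the sharper factor $d^{-20}\psi_{k^\ast+1}\psi_{\ell^\ast+1}$ needed when the single boundary flip must raise both coordinates by $\geq 2$, i.e., when two or more external clusters must be simultaneously nontrivial. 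Summed over the $\leq dR$ boundary vertices and starting deficits, the total gain is $O(d^{-\Omega(1)})\psi_{k^\ast}\psi_{\ell^\ast}$, strictly dominated by the loss for $d\geq 7$ and $C_0$ sufficiently large.

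The principal obstacle is the conditional bookkeeping that ensures every appeal to the bootstrap hypothesis targets \emph{disjoint} spacetime clusters, which is precisely why the proposition is stated with an arbitrary list of future-time cluster conditionings: the drift analysis at time $t$ continually augments $\vec C$ with the freshly revealed $\wC_{u,\leq t}$ and re-probes at time $t<T^\ast$, so only the full conditional form (and the ordering $t_1\geq\cdots\geq t_m>t$) permits the recursion to close. The internal measurability of rigid-dynamics clusters (Remark~\ref{rem:spacetime-cluster-construction-general-init}, resting on the fact that trifurcation status of a vertex is $\wC_{v,\leq t}$-measurable) is what makes each fresh probe genuinely behave as an unconditioned rigid-dynamics cluster governed by the $\psi_\bullet\psi_\bullet$ tails, so that the same inductive bound can be invoked in the gain estimate.
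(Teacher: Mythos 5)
Your plan follows the paper's proof closely in spirit: a stopping-time/bootstrap on the conditional pmf, a decomposition of the drift into loss (non-trifurcation flips inside $\wR_t(u)$) and gain (merges at neighbors), a domination of external cluster sizes by i.i.d.\ $\Psi^{\otimes 2}$ using the augmented conditioning, and the tail bounds of Lemma~\ref{lem:psi-tail-bound}. However, there are two substantive gaps.

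First, your decomposition of the drift into ``flips inside $\wR_t(u)$'' and ``flips at boundary vertices $v\in\partial\wR_t(u)$'' omits the inflow from the state $\wR_{t^-}(u)=\emptyset$: when $u$ is $+1$ and its own clock rings, $u$ can flip to $-1$ and absorb its neighbors' minus regions, creating a region of any size $\ge k$ from nothing. Since the empty region has no boundary, this transition is not captured by either of your two terms. The paper treats this as a separate case (its case~(D)), and it is not negligible: at $k=1$ it occurs at rate up to $e^{-2\beta d/7}$ with no absorption needed, and for $k\ge 2$ the event that $u$'s neighborhood already hosts $k-1$ minus vertices must be bounded via the very bootstrap machinery you set up for the boundary merges. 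Without this term the computed drift is spuriously more negative than the truth, and the argument is incomplete.

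Second, the sentence ``at $T^\ast$ the bound must first saturate for some specific conditioning, probe, and $(k^\ast,\ell^\ast)$'' is not justified. $T^\ast$ is defined as an infimum over an infinite, non-compact family of conditionings $(\vec u,\vec t,\vec C)$, so the supremum of the conditional probabilities need not be attained at $T^\ast$, and even if it is, showing one-sided negative drift at a single saturating instance does not rule out a different conditioning crossing the threshold just after. The paper avoids this by proving negative drift for every conditioning whose conditional probability exceeds $\tfrac12\psi_k\psi_\ell$ (not merely at exact saturation), together with a crude a priori bound $\frac{d}{dt}\rho_t\le dk^2$ valid for all $t$; these two facts together keep each conditional probability at most $\tfrac12\psi_k\psi_\ell$ up to time $T$ and preclude a jump past $\psi_k\psi_\ell$ immediately after, yielding $T=\infty$. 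Your drift estimate already delivers negativity at $\tfrac12\psi_k\psi_\ell$, so this is a fixable but genuine gap in the bootstrap closure. A smaller inaccuracy: you split the gain on whether $v$ has at most three or at least four minus neighbors and claim Observation~\ref{obs:minus-four-neighbors-outside-part} shows ``no external region is absorbed'' in the first regime, but that observation bounds neighbors in a \emph{single} region and says nothing about external ones; the correct dichotomy is on the number $\wJ$ of distinct external minus regions meeting $v$'s neighborhood, which is what drives the use of Lemma~\ref{lem:psi-tail-bound}(b).
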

Note that the event $\wEE$ is the same as $\wX_t(v)=-1$ for all $(v,t)\in \bigcup_{i=1}^{m}C_i$ and $\wX_t(v)=+1$ for all $(v,t)\in \bigcup_{i=1}^{m} \partial_o C_i$. In particular, conditioning on $\wEE$ forces specific update times in the temporal boundary of $\bigcup_{i=1}^{m} C_i$, thus $\P(\wEE)=0$. However, since we assumed $\vec{C}$ is $(\vec{u},\vec{t})$-admissible and conditioning on some clock rings gives another Poisson process, the conditional distribution given $\wEE$ is well-defined.

\begin{remark}\label{rmk:proof-conditional}
    Before proceeding to the proof, let us comment on why the stronger conditional formulation in Proposition~\ref{prop:exp-tail-minus-regions-conditional}, as compared to Proposition~\ref{prop:exp-tails-minus-regions}, is necessary. To establish the negative drift for the probability, the main difficulty lies in controlling the positive contribution to the time-derivative that arises from merging of minus regions. One natural approach would be to exploit the monotonicity in the conditioning on $\wC_t(u)$ (which is a positive information to its complement), in order to inductively apply the tail estimate to the merging clusters. However, such a monotonicity-based bound would lead to terms of the form $\mathbb P(|\wR_{t}(w)| \ge k, |\wC_t(w)|\ge \ell)$ rather than equality on $|\wC_t(w)|$. This includes the event of clusters larger than size $R$ for which we lose the locally tree-like geometry.
    Therefore, we strengthen the inductive hypothesis into the conditional form, performing the entire inductive procedure conditional on disjoint spacetime clusters $(\wC_{u_i,\leq t_i})_{1\leq i\leq m}$. Technically speaking, this conditional form allows us to argue Lemma~\ref{lem:general-tail-bound-domination-by-iid-Z}.
\end{remark}

\subsection{Domination of conditional flip rates}
A key step in proving Proposition~\ref{prop:exp-tail-minus-regions-conditional} is to show that conditioning on spacetime clusters $\wEE$ only makes the rigid dynamics flip faster to $+1$ in the complement of $\bigcup_{i=1}^{m}C_i$. To this end, define the conditional flip rate 
\begin{align}\label{eq:rigid-conditional-flip-rate}
    \widetilde{c}_{v,t}= \widetilde c_{v,t}((\widetilde X_s)_{s<t}) := \lim_{\eps\downarrow 0}\frac{1}{\epsilon } \mathbb P( \wX_{t+\epsilon}(v) \ne \widetilde X_t(v) \mid \wF_{t^-} , \wEE\big)\,,
\end{align}
where $\wF_{t^-}$ is the sigma algebra generated by the past trajectory $(\wX_s)_{s<t}$.

\begin{lemma}\label{lem:conditional-flip-rate}
    For $(\vec{u},\vec{t})$-admissible $\vec{C}$, and $(v,t)$ such that $t\notin \overline{\{s:(v,s)\in \bigcup_{i=1}^{m}C_i\}}$, we have the following. For every realization of $(\wX_{s})_{s<t}$ such that it is compatible with the event $\wEE$ and $\wX_{t^-}(v)=+1$, we have 
    \begin{align*}
        \widetilde{c}_{v,t}  \le 1- p_+((\wX_{t^-}(w))_{w\sim v})\,.
    \end{align*}
    If on the other hand, $\wX_{t^-}(v) = -1$ and $v\notin \mathsf{Tri}(\wR_{t^-}(v))$, then 
    \begin{align*}
       \widetilde{c}_{v,t} \ge p_+((\wX_{t^-}(w))_{w\sim v}) \ge \frac{1}{1+e^{ -2\beta d/7}}\,.
    \end{align*}
\end{lemma}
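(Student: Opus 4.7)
My plan is to express the conditional flip rate as the unconditional rate multiplied by the ratio $\P(\wEE \mid \wF_{t^-}, F)/\P(\wEE \mid \wF_{t^-})$, where $F$ denotes the event of an instantaneous flip at $(v,t)$, and then to bound this ratio via a monotone-coupling argument. The unconditional flip rate given $\wF_{t^-}$ is read off from Definition~\ref{def:rigid-dynamics} and the update rule~\eqref{eq:update-in-terms-of-uniform}: it equals $1-p_+((\wX_{t^-}(w))_{w\sim v})$ when $\wX_{t^-}(v)=+1$, and equals $p_+((\wX_{t^-}(w))_{w\sim v})$ when $\wX_{t^-}(v)=-1$ and $v\notin \mathsf{Tri}(\wR_{t^-}(v))$, since in this case the trifurcation rejection rule is inactive. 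A short computation using the local independence of the Poisson clocks and uniforms then gives
\[
\widetilde c_{v,t}= q^{\mathrm{unc}}_{v,t}\cdot \frac{\P(\wEE\mid \wF_{t^-},F)}{\P(\wEE\mid \wF_{t^-})}\,,
\]
and writing $\P(\wEE\mid \wF_{t^-})$ as a convex combination of $\P(\wEE\mid F)$ and $\P(\wEE\mid \neg F)$ reduces the desired upper bound (resp.\ lower bound) to $\P(\wEE\mid F)\le \P(\wEE\mid \neg F)$ (resp.\ $\ge$).

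To establish these comparisons I would run two coupled rigid-dynamics trajectories from time $t$ onwards under the grand coupling of Definition~\ref{def:grand-coupling}, $\wX^F$ and $\wX^{\neg F}$, differing only at $v$ at time $t$, and first verify that the rigid dynamics preserves monotonicity under this coupling. The only subtle case is a clock ring at a vertex $w$ with both chains at $-1$ attempting a flip up: here any trifurcation point of the less-minus chain is automatically a trifurcation point of the more-minus chain (the larger minus region can only intersect more neighborhood components), so the rejection rule blocks the flip in the more-minus chain whenever it blocks it in the less-minus one, preserving monotonicity; the remaining cases follow from property~(A). The core of the argument is then to show that if $\wC_{u_i,\le t_i}=C_i$ for every $i$ in one of the two coupled chains, the same equalities hold in the other. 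I would prove this by showing, via a spacetime induction on the time of a new disagreement, that the divergence set $\Delta=\{(w,s):\wX^F_s(w)\ne \wX^{\neg F}_s(w),\ s\ge t\}$ is contained in the minus spacetime cluster of $v$ in the more-minus chain. Indeed, a new disagreement at $(w,s_0)$ requires a clock ring at $w$ with a neighbor in $\Delta$ at $s_0^-$, and inspection of the update rule forces the more-minus chain to set $w$ to $-1$ adjacent to that already-divergent minus neighbor, thereby extending the $v$-cluster. Since $(v,t)\notin \bigcup_i C_i$ forces the $v$-cluster and each $C_i$ to be distinct, hence disjoint, maximal minus spacetime components, $\Delta\cap C_i=\emptyset$; and $\Delta\cap \partial_o C_i=\emptyset$ automatically because $\partial_o C_i$ is plus in the less-minus chain. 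Thus the two chains agree on each $C_i\cup \partial_o C_i$, giving equality of the $(u_i,t_i)$-clusters in both chains and hence the required inequality between $\P(\wEE\mid F)$ and $\P(\wEE\mid \neg F)$.

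Finally, the pointwise lower bound $p_+((\wX_{t^-}(w))_{w\sim v})\ge 1/(1+e^{-2\beta d/7})$ in the case $\wX_{t^-}(v)=-1,\ v\notin \mathsf{Tri}(\wR_{t^-}(v))$ follows by combining Observation~\ref{obs:minus-four-neighbors-tripoint} (which bounds the minus neighbors of $v$ by $3$, so $v$ has at least $d-3\ge 4d/7$ plus neighbors when $d\ge 7$) with property~(B). The main obstacle I anticipate is carrying out the divergence-set induction $\Delta\subseteq \wC^{\text{more-minus}}_{v,\le \cdot}$ cleanly while correctly accounting for the non-Markovian trifurcation-rejection rule in the base case at $(v,t)$ and at each subsequent divergent clock ring; once this is in place, the convex-combination identity and property~(B) assemble the two bounds in a routine way.
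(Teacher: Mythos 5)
Your approach is essentially the same as the paper's: both express $\widetilde c_{v,t}$ via Bayes' rule as the unconditional rate times a ratio of conditional probabilities of $\wEE$, reduce to a comparison of $\P(\wEE\mid F)$ versus $\P(\wEE\mid \neg F)$, and establish that comparison by a monotone coupling of two rigid-dynamics trajectories differing only at $(v,t)$, together with the key observation that $\mathsf{Tri}(\wR^{F}_{s}(w))\supseteq \mathsf{Tri}(\wR^{\neg F}_{s}(w))$ so the rejection rule preserves monotonicity. However, two points in your sketch need correction.

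First, you claim a two-sided implication (``if $\wC_{u_i,\le t_i}=C_i$ in one chain then in the other''). Only the one-sided direction is true in general, namely: the more-minus chain compatible with $\wEE$ implies the less-minus chain is. The converse can fail. For instance, the hypothesis $t\notin \overline{\{s:(v,s)\in \bigcup_i C_i\}}$ does \emph{not} preclude $(v,t)$ lying in the outer spatial boundary $\partial_o C_i$ of some $C_i$. If $(v,t)\in\partial_o C_i$ and the flip is $+1\to -1$, the more-minus chain has $\wX^F_t(v)=-1$, immediately contradicting $\wEE$ (which forces $\partial_o C_i$ to be $+1$), while the less-minus chain remains unconstrained at $(v,t)$. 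This is exactly what the paper's coupling argument handles in the $(w,T_k)\in\partial_o\vec C$ bullet (``if the update under $\widetilde Z^v$ was to $-1$, then it was no longer compatible with $\wEE$''). Fortunately, for both cases of the lemma the convex-combination identity only requires $\{\text{more-minus}\in\wEE\}\subseteq\{\text{less-minus}\in\wEE\}$, so the two-sided claim is unnecessary — but it should not be asserted.

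Second, your claim that a new disagreement at $(w,s_0)$ requires a neighbor of $w$ in $\Delta$ at $s_0^-$ is not quite right: the rigid dynamics rejects $-1\to+1$ flips based on the trifurcation status of $w$ in its minus region, which depends on the minus spacetime cluster up to distance $R$, not just on $w$'s $1$-neighborhood. Thus a disagreement can be born at $(w,s_0)$ with all of $w$'s neighbors agreeing, if $\wR^{F}_{s_0^-}(w)\supsetneq\wR^{\neg F}_{s_0^-}(w)$ causes the trifurcation rule to block the flip in the more-minus chain only. Your conclusion $\Delta\subseteq\wC^{F}_{(v,t),\le\cdot}$ is nevertheless salvageable: in that scenario the two spacetime clusters $\wC^{F}_{w,\le s_0^-}$ and $\wC^{\neg F}_{w,\le s_0^-}$ differ, forcing $\wC^{F}_{w,\le s_0^-}$ to intersect $\Delta$ (either a point of $\Delta$ is minus in $\wX^F$ and in the cluster, or a connecting minus path has broken, which requires a point of $\Delta$), whence by the inductive hypothesis $\wC^{F}_{w,\le s_0^-}=\wC^{F}_{(v,t),\le\cdot}$ and $(w,s_0)$ joins that cluster. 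You correctly anticipate this is where the non-Markovian rejection rule bites; the paper sidesteps the whole issue by inducting on domination over $V\times[0,t]\setminus\vec C$ together with joint compatibility with $\wEE$, rather than on the structure of $\Delta$. Finally, a reminder that the Bayes'-rule step needs the additional conditioning on the clock-ring times (as the paper does with $\cT_{\vec C}$ and then $\cT$), since $\wEE$ and the flip event $F$ each have probability zero without it.
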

\begin{proof}
 We start with the proof when $\widetilde{X}_{t^-}(v) = +1$.
    Let us additionally condition on all clock ring times $\mathcal T_{\vec{C}}$ which are the clock rings $t_{w,j}$ such that $(w,t_{w,j})\in \bigcup_{i=1}^{m}C_i$ or its temporal closure. Note that since we have $(v,t)\notin \bigcup_{i=1}^{m}C_i$, none of these are at times  in $[t,t+\epsilon]$ for $\epsilon$ sufficiently small. 
    Then, using Bayes' formula,~\eqref{eq:rigid-conditional-flip-rate} is 
    \begin{align*}
                \lim_{\epsilon \downarrow 0} &  \frac{\mathbb P(\wEE \mid \wF_{t-}, \cT_{\vec{C}}, \wX_{t+\epsilon}(v) \ne \wX_{t^-}(v))}{\mathbb P(\wEE \mid \wF_{t-}, \cT_{\vec{C}})}\cdot \frac{\mathbb P(\wX_{t+\epsilon}(v) \ne \wX_{t^-}(v) \mid \wF_{t^-}, \cT_{\vec{C}}) }{\epsilon} \\ 
        & =  \frac{\mathbb P(\wEE \mid \wF_{t^-}, \cT_{\vec{C}}, \wX_{t}(v) \ne \wX_{t^-}(v))}{\mathbb P(\wEE \mid \wF_{t^-}, \cT_{\vec{C}})} \cdot  \widehat{c}_{v,t}((\wX_s)_{s<t})\,,
    \end{align*}
    where $\widehat{c}_{v,t}$ here denotes the rigid dynamics' flip rate given $(\wX_{s})_{s<t}$ but not conditioning on $\wE_{\vec{u},\vec{t},\vec{C}}$. This latter rate is only given $(\wX_s)_{s<t}$ because the flip rate is independent of $\cT_{\vec{C}}$, and it is exactly $1-p_+((\wX_{t^-}(w))_{w\sim v})$. 
    Note that the conditioning on $\cT_{\vec{C}}$ allowed this use of Bayes' formula since without conditioning on $\cT_{\vec{C}}$, the probabilities on the right would be zero. 
    
    Conditioning further on any admissible $\cT$ compatible with $\cT_{\vec{C}}$ and with a clock ring at $(v,t)$, it suffices to prove that for $(v,t)$ such that $t\notin \overline{\{s:(v,s)\in \bigcup_{i=1}^{m}C_i\}}$,
    \begin{align}\label{eq:flip-rate-after-Bayes}
 \mathbb P(\wEE \mid \wF_{t^-}, \cT, \wX_{t}(v) =-1)\leq \mathbb P(\wEE \mid \wF_{t^-}, \cT, \wX_t(v)=+1) \,.
 \end{align}
    To prove \eqref{eq:flip-rate-after-Bayes}, we consider coupling two copies of rigid dynamics with the same history before time $t$, one of which flipped $v$ at time $t$ from $+1$ to $-1$ and the other one which did not. These are coupled via the grand coupling of Definition~\ref{def:grand-coupling}, and denoted by $\widetilde{Z}_t^v$ and $\widetilde{Z}_t$ respectively.  

    \medskip
    \noindent \emph{Base case}: At time $t$, both $\widetilde{Z}_t^v$ and $\widetilde Z_t$ are compatible with $\wEE$ because their histories up to time $t^-$ were compatible, and $(v,t)$ is such that $t\notin \overline{\{s:(v,s)\in \bigcup_{i=1}^{m}C_i\}}$. 

    \medskip
    Enumerate the clock rings of $\cT$ (possible using admissibility of $\cT$) that follow time $t$ as $T_0 = t < T_1 < T_2 <...$. We show that for every such $T_i$, $\widetilde{Z}_{T_i}^v\in \wEE$ implies the same for $\widetilde Z_{T_i}$.   

    \medskip
    \noindent \emph{Inductive step}: 
    Suppose that for $t< T_k$ we have $$\widetilde{Z}(V\times [0,t] \setminus  \vec{C})\ge \widetilde{Z}^v(V\times [0,t]\setminus \vec{C}) \qquad\text{and} \qquad \widetilde{Z}^v(V\times [0,t]), \widetilde Z(V \times [0,t]) \in \wEE$$
    where the latter $\in$ means it is compatible with it (there is a realization of the future of the processes that would be in $\wEE$). We claim that we retain the domination on the complement of $\vec{C}$, and equality inside $\vec{C}$ if $\widetilde{Z}^v$ remains consistent with $\wEE$. This would imply that the inductive assumption holds for times $[T_k,T_{k+1})$. 
    
    We divide the discussion according to which vertex $w$ has its clock ring at time $T_k$: 
    \begin{itemize}
        \item $(w,T_k)$ at graph distance at least two from $\vec{C} \cap \{V\times \{T_k\}\}$: its neighbors are more positive under $\widetilde Z$ because they are exterior to $\vec{C}$. Moreover, the event of $w$ being a trifurcation point is measurable and monotone with respect to its past $(\widetilde{Z}^v_s)_{s<T_k}, (\widetilde{Z}_s)_{s<T_k}$ on the complement of $\vec{C}$, on which we have assumed the ordering between $\widetilde{Z}$ and $\widetilde{Z}^v$ holds. 
        Therefore, the rigid dynamics updates satisfy $\widetilde{Z}_{T_k}^v(w) = +1 \implies \widetilde{Z}_{T_k}(w) = +1$ retaining the domination (in this case consistency with $\wEE$ is evidently unaffected). 
        \item $(w,T_k)\in  \vec{C}$: In both chains, this vertex's neighbors at time $T_k$ are either in $\vec{C}$ in which case they are minus, or they are on its boundary in which case they are $+1$ in both copies by the inductive assumption that $(\widetilde{Z}^v_{s})_{s<T_k} \in \wEE$ and also $(\widetilde{Z}_s)_{s<T_k} \in \wEE$. Moreover, the trifurcation point status of the vertex $w$ is measurable with respect to the past of its spacetime cluster, on which we have assumed that the two chains agree since both are compatible with $\wEE$. 
        Thus, the update is the same in the two chains. 
        \item $(w,T_k) \in \partial_o  \vec{C}$: The neighbors of $w$ at time $T_k$ are either in $\vec{C}$ in which case they are minus in both copies, or they are exterior to $\vec{C}$ in which case they are more plus under $\widetilde{Z}$ than under $\widetilde{Z}^v$. Thus the update is more plus under $\widetilde{Z}$ than under $\widetilde{Z}^v$ and the domination on the complement is retained. 
        If the update maintained $+1$ under the lower $\widetilde{Z}^v$ chain, then it also maintained $+1$ under the $\widetilde Z$ chain, and both remain compatible with $\wEE$. If the update under $\widetilde{Z}^v$ was to $-1$, then it was no longer compatible with $\wEE$ because $(w,T_k) \in \partial_o  \vec{C}$. 
    \end{itemize}
    Together this implies we preserve the inductive hypothesis, and ultimately conclude that $\widetilde{Z}^v \in \wEE$ implies the same of $\widetilde Z$, so that~\eqref{eq:flip-rate-after-Bayes} indeed holds. 

    The proof in the case of a $-1$ to $+1$ flip goes by the same reasoning to compare $\widetilde{c}_{v,t}$ to its rate only conditioning on $\wF_{t^-}$, and that latter rate of flipping to $+1$ is at least $p_+((\wX_{t^-}(w))_{w\sim v})$, which by assumption (B) of Definition~\ref{def:general-ising-process} is at least $1/(1+e^{ - 2\beta d/7})$ because $v$ has at most $3$ minus neighbors by Lemma~\ref{obs:minus-four-neighbors-tripoint} and $d\ge 7$ (and it is not a trifurcation point by assumption). 
\end{proof}
\subsection{Proof of Proposition~\ref{prop:exp-tail-minus-regions-conditional}}
Let us set up some notation towards proving Proposition~\ref{prop:exp-tail-minus-regions-conditional}. Throughout, we let $\vec{u}=(u_1,\ldots, u_m)\in V^m$ and $\vec{t}=(t_1,\ldots, t_m)$ such that $t_1\geq \ldots \geq t_m$. Define the following quantity: 
\[
\begin{split}
 &\rho_t(u;k,\ell):= \max_{m}\sup_{(u_i,t_i)_{i=1}^m: t_m >t}~~\sup_{\vec{C}} ~\rho_t(u;k,\ell \mid \wEE),~~\textnormal{where}\\ &\rho_t(u;k,\ell \mid \wEE):=\mathbb P\big( |\wR_t(u)| \ge k, |\wC_t(u)| = \ell \mid \wEE\big)\cdot \one\Big\{(u,t)\notin \bigcup_{i=1}^{m}C_i\Big\}.
\end{split}
\]
where the supremum over $\vec{C}$ is over $(\vec{u},\vec{t})$-admissible $\vec{C}=(C_1,\ldots C_m)$. Here, for $m=0$, we define $t_m\equiv \infty$. We define the time  $T\in [0,\infty) \cup \{\infty\}$ up to which we have control on minus regions as 
\begin{align}\label{eq:def-T}
    T := \inf\big\{t\ge 0: \max_u \rho_t(u;k,\ell) \ge \psi_k \psi_\ell \text{ for some }1\le k\le \ell\le R\big\}\,.
\end{align}

Note that at $t=0$, $\wX_0(u)=+1$ deterministically, and thus $\rho_0(u;k,\ell)=0$. Our aim is to show $T=\infty$. We will prove in Lemma~\ref{lem:NTS-rho-negative-drift} that the time derivative of $\rho_t(u;k,\ell\mid \wEE)$ has a uniform upper bound, which implies that $T>0$. Moreover, we establish negative drift of $\rho_t(u;k,\ell\mid \wEE)$ assuming $t<T$ if $\max_u\rho_t(u;k,\ell)$ gets close to the threshold $\psi_k\psi_\ell$. In what follows, we denote the event
\begin{equation}\label{eq:def-NI}
\mathsf{NI}_{u,\vec{u},\vec{t}}(t):=\Big\{(u,t)\notin \bigcup_{i=1}^{m}\wC_{u_i,\leq t_i}\Big\}.
\end{equation}
The following lemma will be useful for proving the negative drift as it establishes that while $t< T$, there is an (approximate) domination of the joint sizes of minus spacetime clusters disjoint to $\vec{C}$ by independent draws from $\Psi$. 

\begin{lemma}
    \label{lem:general-tail-bound-domination-by-iid-Z}
 Let $t<T$, $\vec{u},\vec{t}$ such that $t_m\geq t$, and $1\le k\le \ell \le R$. For any $q$ vertices  $w_1,...,w_q\in V$ where $q\leq d$, we have the following. Denote the number of distinct non-empty minus regions containing $(w_i)_{i=1}^q$ by 
 \begin{align}\label{eq:def-distinct-regions}
     \wJ=\sum_{i=1}^{q}\one\Big\{\Big|\wR_{t^-}(w_i) \setminus \bigcup_{a=1}^{i-1} \wR_{t^-}(w_a)\Big|\geq 1\Big\}
 \end{align}
 and for $(Z_i)_{1\leq i \leq q}$ i.i.d.\ draws  from $\Psi^{\otimes 2}$, let $J=\sum_{i=1}^{q}\one\{Z_i\geq (1,1)\}$. Then, on the event $\bigcap_{i=1}^{q} \mathsf{NI}_{w_i,\vec{u},\vec{t}}(t^-)$, we have 
        \begin{align*}
       (1) & \quad  \mathbb P\Big( |\bigcup_{i=1}^q \wR_{t^-}(w_i)|\ge k, |\bigcup_{i=1}^q \wC_{t^-}(w_i)| \in [\ell, R] \, \Big| \,\wFF \Big) \le (1+d^{-10})\mathbb P\Big(\sum_{i=1}^q Z_i \ge 
        (k,\ell)\Big)\,. \\ 
        (2) &  \quad \mathbb P\Big( |\bigcup_{i=1}^q \wR_{t^-}(w_i)|\ge k, |\bigcup_{i=1}^q \wC_{t^-}(w_i)| \in [\ell, R], \wJ \ge 2 \, \Big| \, \wFF \Big) \le (1+d^{-10})\mathbb P\Big(J\ge 2, \sum_{i=1}^{q} Z_i \ge 
        (k,\ell)\Big)\,. \\ 
        (3) &  \quad \mathbb P\Big( |\bigcup_{i=1}^q \wR_{t^-}(w_i)|\ge k, |\bigcup_{i=1}^q \wC_{t^-}(w_i)| \ge \ell, \max_i |\wC_{t^-}(w_i)| \le R-1 \, \Big| \, \wFF \Big) \\
        & \qquad \qquad \qquad \qquad \qquad \qquad \qquad \qquad \qquad \qquad \qquad \qquad \le (1+d^{-10})\mathbb P\Big( \sum_{i=1}^q  Z_i \ge 
        (k,\ell)\Big)\,.
    \end{align*}
\end{lemma}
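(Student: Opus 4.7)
The plan is to inductively reveal the minus spacetime clusters $\wC_{w_1,\le t^-},\ldots,\wC_{w_q,\le t^-}$ in order and apply Proposition~\ref{prop:exp-tail-minus-regions-conditional} (valid at the left-limit $t^-$ since $t<T$) to each newly revealed cluster under an enlarging admissible conditioning. Define the \emph{fresh} index set
\[
I':=\Big\{i\in[q]:(w_i,t^-)\notin \wC_{w_j,\le t^-}\text{ for every }j<i\text{ and }|\wR_{t^-}(w_i)|\ge 1\Big\}.
\]
Since minus spacetime clusters are maximal connected components of the minus spacetime set, for $i<j$ in $I'$ the clusters $\wC_{w_i,\le t^-}$ and $\wC_{w_j,\le t^-}$ are either identical (which contradicts $j\in I'$) or disjoint, hence disjoint. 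Consequently $\wJ=|I'|$, and
\[
\Big|\bigcup_{i=1}^q \wR_{t^-}(w_i)\Big|=\sum_{i\in I'}|\wR_{t^-}(w_i)|,\qquad \Big|\bigcup_{i=1}^q \wC_{t^-}(w_i)\Big|=\sum_{i\in I'}|\wC_{t^-}(w_i)|.
\]

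At step $i$ I would condition on $\wFF$ together with the already-revealed fresh clusters $(\wC_{w_j,\le t^-})_{j\in I'\cap[i-1]}$; since each revealed cluster is a $(w_j,t^-)$-admissible spacetime cluster, the concatenation has the form $\wE_{\vec{u}',\vec{t}',\vec{C}'}$ for an enlarged $(\vec{u}',\vec{t}')$-admissible $\vec{C}'$. On the sub-event $\{i\in I'\}$, the point $(w_i,t^-)$ lies outside this conditioning (so $\mathsf{NI}_{w_i,\vec{u}',\vec{t}'}(t^-)$ holds), and Proposition~\ref{prop:exp-tail-minus-regions-conditional} gives
\[
\P\big(|\wR_{t^-}(w_i)|\ge k_i,\ |\wC_{t^-}(w_i)|=\ell_i\,\big|\,\text{prior exposure}\big)\cdot \one\{i\in I'\}\le \psi_{k_i}\psi_{\ell_i}
\]
for all $1\le k_i\le \ell_i\le R$; the constraint $k_i\le\ell_i$ is automatic from $\wR_{t^-}(w_i)\subseteq \wC_{t^-}(w_i)$. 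Iterating through the sequential reveal via the tower property gives, for any $S\subseteq[q]$ and sizes $(k_i,\ell_i)_{i\in S}$ with $k_i,\ell_i\ge 1$,
\[
\P\big(I'=S,\ |\wR_{t^-}(w_i)|=k_i,\ |\wC_{t^-}(w_i)|=\ell_i\ \forall i\in S\,\big|\,\wFF\big)\le \prod_{i\in S}\psi_{k_i}\psi_{\ell_i}.
\]

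To prove (1) I would decompose the event $\{|\bigcup \wR|\ge k,\ |\bigcup \wC|\in[\ell,R]\}$ by $I'=S$ and the exact sizes, then sum the above bound over $(k_i,\ell_i)_{i\in S}$ with $\sum k_i\ge k$ and $\ell\le\sum \ell_i\le R$. Each such summand is mapped to a unique term of $\P\big(\sum_{i=1}^q Z_i\ge(k,\ell)\big)$ by padding $(k_i,\ell_i)=(0,0)$ for $i\notin S$; padding changes the summand by a factor $\psi_0^{-2|S^c|}$. Since $1-\psi_0=\sum_{k\ge 1}\psi_k=O(d^{-1100})$ and $q\le d$, this factor is at most $1+d^{-10}$, yielding (1). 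Part (2) follows by additionally restricting the partition sum to $|S|\ge 2$ and matching with $J\ge 2$ on the right via the same padding. Part (3) replaces the upper bound $\sum\ell_i\le R$ by the per-cluster cap $\ell_i\le R-1$, which is precisely the regime in which Proposition~\ref{prop:exp-tail-minus-regions-conditional} applies to each fresh cluster (so no constraint on $\sum\ell_i$ is needed).

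The main delicacy will be the admissibility bookkeeping: verifying that the concatenation $\vec{C}'=(\vec{C},(\wC_{w_j,\le t^-})_{j\in I'\cap[i-1]})$ is genuinely $(\vec{u}',\vec{t}')$-admissible in the sense of Definition~\ref{def:admissible-cluster} at each step of the sequential reveal, and that the indicator $\one\{i\in I'\}$ agrees with $\mathsf{NI}_{w_i,\vec{u}',\vec{t}'}(t^-)\cap\{|\wR_{t^-}(w_i)|\ge 1\}$ so that Proposition~\ref{prop:exp-tail-minus-regions-conditional} can be validly invoked on the enlarged conditioning. Once this measurability bookkeeping is in place, the iterated-conditioning and convolution-comparison steps reduce to a combinatorial identification of ordered tuples padded with $(0,0)$.
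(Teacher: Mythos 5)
Your decomposition is essentially the same as the paper's: identifying the "fresh" index set $I'$, showing $\wJ = |I'|$ and that the unions decompose as sums over $I'$, and then matching summands against those of $\sum_{i=1}^q Z_i \geq (k,\ell)$ by padding with $(0,0)$, with the padding absorbed into the $(1+d^{-10})$ prefactor via $\psi_0^{-2q} \leq 1+d^{-10}$. The paper encodes the same combinatorics through indicator products $\mathfrak W_i(j_i,j_i')$ over integer partitions rather than through $I'$, but these are equivalent bookkeeping devices.

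There is, however, a genuine gap in your conditioning step that you gesture at ("admissibility bookkeeping") but do not identify or resolve. When you condition on $\wFF$ together with the already-revealed clusters $(\wC_{w_j,\le t^-})_{j\in I'\cap[i-1]}$ and then apply Proposition~\ref{prop:exp-tail-minus-regions-conditional} to bound $\wC_{w_i,\le t^-}$, you need the conditioning times to be strictly greater than the target time; the Proposition's hypothesis is $t_1\geq t_2\geq\ldots\geq t_m > t$, a strict inequality. In your formulation, all the revealed clusters and the cluster being bounded sit at the same limiting time $t^-$, so this hypothesis fails. This is not a formality one can wave away: the Proposition is exactly about conditioning on minus clusters from strictly later times, and the entire mechanism (continuity in $t$, the derivative bound away from $\bigcup C_i$, the fact that $\rho_t$ vanishes at entry/exit times of $u$ into the conditioned clusters) exploits strict temporal separation.

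The paper's fix is the staggered-time construction: assign to each $w_i$ a time $s_i$ with $s_1>s_2>\cdots>s_q$, all less than $t$, prove the sequential conditional bound at times $s_i$ (so that when $\wC_{w_i,\le s_i}$ is bounded, every cluster in the conditioning — both the original $\vec C$ at times $\geq t > s_i$ and the revealed $\wC_{w_j,\le s_j}$ at times $s_j>s_i$ — is at a strictly later time), and only then take the iterated limit $\lim_{s_q\uparrow t}\cdots\lim_{s_1\uparrow t}$. Without some such approximation your invocation of Proposition~\ref{prop:exp-tail-minus-regions-conditional} is not valid as stated, so this is the missing idea you would need to supply.
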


\begin{proof}
    Let us begin with (1). To deal with $t^-$ properly and apply the bound implied by $t<T$, we associate to each $w_i$ a time $s_i$, with $s_1>s_2>...>s_q$ and we will take $\lim_{s_q\uparrow t} \cdots\lim_{s_1\uparrow t}$ in that order. We are taking that limit of the following expectation of product of indicators 
    \begin{align}\label{eq:sum-product-indicators}
       \mathbb E \bigg[ \mathbf 1\Big\{|\bigcup_{i=1}^{q} \wR_{s_i}(w_i)|\ge k\Big\}\mathbf 1\Big\{|\bigcup_{i=1}^q \wC_{s_i}(w_i)|\in [\ell,R]\Big\} \mathbf 1\{\mathsf{NI}_{w_1,\vec{u},\vec{t}}(s_1)\}\cdots \mathbf 1\{\mathsf{NI}_{w_q,\vec{u},\vec{t}}(s_q)\}\, \Big| \, \widetilde{\cF}_{\vec{u},\vec{t}}\bigg]\,,
    \end{align}
    noting that $\mathsf{NI}_{w_i,\vec{u},\vec{t}}(s_i)$ are measurable with respect to $\widetilde{\cF}_{\vec{u},\vec{t}}$ and guaranteed by $\bigcap_{i=1}^{q} \mathsf{NI}_{w_i,\vec{u},\vec{t}}(t^-)$ for $s_1,...,s_q$ sufficiently close to $t$. 
    Then~\eqref{eq:sum-product-indicators} can be rewritten as 
    \begin{align*}
       \mathbb E \bigg[\sum_{k\le j\le R}\,\sum_{\ell \le j'\le R} \,\sum_{j_1,...,j_q |j} \,\sum_{j'_1,...,j'_q |j'} \,\prod_{i=1}^q \mathfrak W_{i}(j_i,j'_i) \, \Big|\, \widetilde{\cF}_{\vec{u},\vec{t}}\bigg]\,,
    \end{align*}
    where $j_1,...,j_q|j$ indicates a sum over integer partitions of $j$, and where
    \begin{align*}
        \mathfrak W_{i}(j_i,j'_i) = \mathbf 1\Big\{\big|\wR_{s_i}(w_i)\setminus \bigcup_{a=1}^{i-1} \wR_{s_a}(w_a)\big|= j_i\Big\} \mathbf 1\Big\{\big|\wC_{s_i}(w_i)\setminus \bigcup_{a=1}^{i-1} \wC_{s_a}(w_a)\big|= j_i'\Big\} \mathbf 1\{\mathsf{NI}_{w_i, \vec{u},\vec{t}}(s_i)\}\,.
    \end{align*}

    We now take expectation, and iteratively reveal $\wC_{w_1,\leq s_1},\ldots \wC_{w_q,\leq s_q}$ to bound 
    \begin{align*}
        & \mathbb E\Big[\prod_{i=1}^q \mathfrak W_{i}(j_i,j_i')\mid \wF_{\vec{u},\vec{t}}\Big] \\
        & = \mathbb E\Big[\mathfrak W_{1}(j_1,j_1') \mathbb E[ \mathfrak W_{2} (j_2,j_2') \mid \wF_{(w_{1},\vec{u}),(s_1,\vec{t})}] \cdots \mathbb E[ \mathfrak W_{q}(j_q,j_q') \mid \wF_{(w_1,...,w_{q-1}, \vec{u}),(s_1,...,s_{q-1},\vec{t})}] \, \Big|\,  \wF_{\vec{u},\vec{t}}\Big]\,,
    \end{align*}
    where we are using that $\mathfrak W_{i}(j_i,j_i')$ is measurable with respect to $\wF_{(w_1,...,w_i,\vec{u}),(s_1,...,s_i,\vec{t})}$. Observe that if $(w_i,s_i)\in \bigcup_{a=1}^{i-1}\wC_{w_a,\leq s_a}$, then $\wR_{w_i}(s_i)\subseteq \bigcup_{a=1}^{i-1} \wR_{w_a}(s_a)$, and likewise for $\wC_{s_i}(w_i)$. Thus, for $j_i,j_i'\geq 1$, $\mathfrak W_i(j_i,j_i')=0$ unless the event $\mathsf{NI}_{w_i,(w_1,\ldots,w_{i-1},\vec{u}),(s_1,\ldots, s_{i-1}, \vec{t})}(s_i)$ holds. Moreover, on the event $\mathsf{NI}_{w_i,(w_1,\ldots,w_{i-1},\vec{u}),(s_1,\ldots, s_{i-1}, \vec{t})}(s_i)$, we have $\wR_{s_i}(w_i)$ doesn't intersect $\bigcup_{a=1}^{i-1}\wR_{s_a}(w_a)$ and likewise for $\wC_{s_i}(w_i)$. Combining with our assumption that $s_i<s_{i-1}<t$, we have for $j_i,j_i'\geq 1$,
    \begin{align*}
   &\E\Big[\mathfrak W_i(j_i,j_i')\Big|\wF_{(w_1,\ldots, w_i,\vec{u}),(s_1,\ldots, s_i, \vec{t})}\Big]\\
   &=\P\big(\wR_{s_i}(w_i)=j_i, \wC_{s_i}(w_i)=j_i'\big| \wF_{(w_1,\ldots, w_i,\vec{u}),(s_1,\ldots, s_i, \vec{t})}\big)\one\{\mathsf{NI}_{w_i,(w_1,\ldots,w_{i-1},\vec{u}),(s_1,\ldots, s_{i-1}, \vec{t})}(s_i)\}\leq \psi_{j_i}\psi_{j_i'},
    \end{align*}
    since we assumed $t<T$ so that $\P\big(\wR_{s_i}(w_i)\geq j_i, \wC_{s_i}(w_i)=j_i'\mid \wF_{(w_1,\ldots, w_i,\vec{u}),(s_1,\ldots, s_i, \vec{t})}\big)\leq \psi_{j_i}\psi_{j_i'}$ holds on the event $\mathsf{NI}_{w_i,(w_1,\ldots,w_{i-1},\vec{u}),(s_1,\ldots, s_{i-1}, \vec{t})}(s_i)$ for all realizations of $\wC_{u_i,\leq t_i}$'s and $\wC_{w_i,\leq s_i}$'s.
   If $j_i=0$ or $j_i'=0$, the LHS is trivially bounded by $1\le (1+d^{-100})\psi_0$. Therefore, \eqref{eq:sum-product-indicators} is at most
    \begin{align*}
        (1+d^{-100})^{2q} \sum_{k\le j \le R }\sum_{\ell \le j'\le R} \sum_{j_1,...,j_q |j} \sum_{j_1',...,j_q' | j'} \psi_{j_1} \cdots \psi_{j_q} \psi_{j_1'}\cdots \psi_{j_q'} \le (1+d^{-10}) \mathbb P\Big( \sum_{i=1}^q Z_i \ge (k,\ell)\Big)\,,
    \end{align*}
    where we used $q\leq d$. Finally, taking the limit $\lim_{s_q \uparrow t}\cdots \lim_{s_1\uparrow t}$ gives the claimed bound.

    For item (2), note that $\wJ=\sum_{i=1}^{q}\one\big\{(|\wR_{t^-}(w_i)|,|\wC_{t^-}(w_i)|)\geq (1,1)\}$ holds since $\wR_{t^-}(w_i)=\emptyset$ if and only if $\wC_{t^-}(w_i)=\emptyset$. With this observation, the proof of item (2) follows identically except that the sum only runs over integer partitions $j_1,...,j_q$ of $j$ and $j'_1,...,j'_q$ of $j'$ where the number of $i$ such that $(j_i,j'_i)\geq (1,1)$ is at least $2$. This gives the same bound but with an indicator that at least two of the $Z_i$'s are greater or equal than $(1,1)$. 

    Similarly, the proof of item (3) follows by the same reasoning, noticing that even though in this case $j,j'\ge R$ is allowed, the summands $j_1,...,j_q$ and $j_1',...,j_q'$ are all constrained to be less than $R-1$ and therefore, the bound by $\rho_{s_i}$ applies while $t< T$ as before. 
\end{proof}
\begin{remark}\label{rmk:k-zero}
    Although we stated Lemma~\ref{lem:general-tail-bound-domination-by-iid-Z} for $1\leq k \leq \ell \leq R$, it remains true for any $0\leq k \leq \ell \leq R$ under the same assumptions. This is because for $k=\ell=0$, the conclusion in (2) follows from the case $k=\ell=2$ because of the indicator $\wJ\geq 2$ while (1) and (3) become vacuous. For $k=0, \ell\geq 1$, note that $|\bigcup_{i=1}^{q}\wC_{t^-}(w_i)|\geq 1$ implies $|\bigcup_{i=1}^{q}\wR_{t^-}(w_i)|\geq 1$. 
\end{remark}
With Lemma~\ref{lem:general-tail-bound-domination-by-iid-Z} in hand, we next show that $t\mapsto \rho_t(u;k,\ell\mid \wEE)$ has negative drift. First, note that because of the indicator on $(u,t)\notin \bigcup_{i=1}^{m}C_i$, we have $\rho_t(u;k,\ell\mid \wEE)=0$ for all $t\in \{s:  (u,s)\in\bigcup_{i=1}^{m}C_i\}$. Furthermore, again thanks to the indicator, the map $t\mapsto \rho_{t}(u;k,\ell\mid \wEE)$ is continuous for all times $t \in [0,t_m]$ and differentiable in the interior of $ [0,t_m] \setminus \{s:  (u,s)\in\bigcup_{i=1}^{m}C_i\}$. Indeed, at the initial times (besides $0$) in every segment of $[0,t_m] \setminus \{s:  (u,s)\in\bigcup_{i=1}^{m}C_i\}$, we have $\rho_t(u;k,\ell\mid \wEE) =0$ because at those times $u$ was just removed from $C_i$ for some $i$, which means it was flipped from $-1$ to $+1$ so that $\wR_t(u)=\emptyset$. Similarly, $\wR_t(u)=\emptyset$ holds at the left limit points of the final times (besides $t_m$) in these segments.

Proposition~\ref{prop:exp-tail-minus-regions-conditional} will be a straightforward consequence of the following lemma. 

\begin{lemma}\label{lem:NTS-rho-negative-drift}
  Let $d\ge 7$, $\beta > \frac{C_0 \log d}{d}$ and suppose $G$ is $1$-locally-treelike.
 For $u\in V$, $1\leq k\leq \ell \leq R$, $\vec{u},\vec{t}$ such that $t_m>t$, and $(\vec{u},\vec{t})$-admissible $\vec{C}$, the following holds. Suppose that $t$ lies in the interior of $ [0,t_m] \setminus \{s:  (u,s)\in\bigcup_{i=1}^{m}C_i\}$. If $t< T$ and $\rho_t(u;k,\ell \mid \wEE)\ge \frac{1}{2}\psi_k \psi_{\ell}$, then $$\frac{d}{d t} \rho_{t}(u;k,\ell \mid \wEE) <0\,.$$ Furthermore, $\frac{d}{d t} \rho_{t}(u;k,\ell \mid \wEE) \leq dk^2$ holds for all $t$ in the interior of $ [0,t_m] \setminus \{s:  (u,s)\in\bigcup_{i=1}^{m}C_i\}$ (regardless of $t<T$).
\end{lemma}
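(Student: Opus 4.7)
The plan is to compute $\frac{d}{dt}\rho_t(u;k,\ell\mid \wEE)$ as the rate of entering minus the rate of leaving the event $\mathcal{A}_t := \{|\wR_t(u)|\geq k,\,|\wC_t(u)|=\ell\}$, and to show that the outflow dominates whenever $\rho_t(u;k,\ell\mid \wEE)\geq \tfrac12\psi_k\psi_\ell$. Only clock rings at vertices $v$ lying in $\wR_{t^-}(u)$ or graph-adjacent to $\wR_{t^-}(u)$ can change the event, and Observation~\ref{obs:minus-region-evolution} splits these into two families: flips inside $\wR_{t^-}(u)$ from $-1$ to $+1$ (which strictly decrease $|\wR_t(u)|$ while fixing $|\wC_t(u)|$, except when $v=u$ in which case both collapse to $0$), and flips at outside-neighbors from $+1$ to $-1$ (which merge $\wR_{t^-}(u)$ with at most $d$ neighboring minus regions, possibly growing $|\wR|$ and $|\wC|$).

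For the negative contribution, on $\mathcal{A}_t$ the constraint $|\wC_{t^-}(u)|=\ell\leq R$ forces $\wR_{t^-}(u)\subseteq \wC_{t^-}(u)$ to have diameter less than $R$, so Lemma~\ref{lem:size:trifurcation} supplies at least $|\wR_{t^-}(u)|/2 \geq k/2$ non-trifurcation vertices in $\wR_{t^-}(u)$, and Lemma~\ref{lem:conditional-flip-rate} ensures each of them flips from $-$ to $+$ at conditional rate at least $(1+e^{-2\beta d/7})^{-1}$. When $|\wR_{t^-}(u)|=k$ every such flip drops the size below $k$ and hence exits $\mathcal{A}_t$, so the outflow is at least $\tfrac{k}{2}(1-e^{-2\beta d/7})\cdot \P(|\wR_{t^-}(u)|=k,\,|\wC_{t^-}(u)|=\ell\mid\wEE)$. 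To convert this into a bound in terms of $\rho_t$, I use $t<T$: the stratum with $|\wR_{t^-}(u)|\geq k+1$ contributes at most $\psi_{k+1}\psi_\ell=O(d^{-100})\psi_k\psi_\ell$, which is negligible compared to $\rho_t\geq \tfrac12\psi_k\psi_\ell$, so $\P(|\wR_{t^-}(u)|=k,\,|\wC_{t^-}(u)|=\ell\mid\wEE)\geq \tfrac13\rho_t$, producing a negative drift of order $k(1-e^{-2\beta d/7})\psi_k\psi_\ell$.

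For the positive contribution, I bound the rate of boundary flips $v\in\partial\wR_{t^-}(u)$ from $+$ to $-$ that enter $\mathcal{A}_t$. Summing over the at most $d|\wR_{t^-}(u)|$ such $v$, Lemma~\ref{lem:general-tail-bound-domination-by-iid-Z} dominates the sizes of the $q\leq d$ neighboring clusters by $\sum_{i=1}^q Z_i$ with $Z_i\stackrel{i.i.d.}{\sim}\Psi^{\otimes 2}$, and entry into $\mathcal{A}_t$ requires these sums to reach $(k,\ell)$ from below. Because the flip itself strictly grows $|\wC_t(u)|$ by at least one, I can apply parts (2) or (3) of Lemma~\ref{lem:general-tail-bound-domination-by-iid-Z} depending on whether two or more clusters merge or only one does, and then Lemma~\ref{lem:psi-tail-bound}(b) supplies a gain factor $d^{-20}\psi_{k+1}\psi_{\ell+1}$ for the multi-cluster case while the ratio $\psi_{k+1}/\psi_k\lesssim d^{-100}$ handles the single-cluster case. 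This yields a total inflow of at most $d^{O(1)}\psi_k\psi_\ell\cdot d^{-100}$, which for $\beta>C_0\log d/d$ with $C_0$ large enough is strictly smaller than the outflow computed above.

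For the unconditional bound $\frac{d}{dt}\rho_t(u;k,\ell\mid\wEE)\leq dk^2$, I drop all tail inputs and simply estimate the jump rate by the number of vertices that can affect the event---at most $d|\wR_{t^-}(u)|\leq dk$ on the strata contributing to $\rho_t$---times the maximum change of order $k$ a single jump can induce in the indicator's probability. The main obstacle will be the positive-side bookkeeping: the merger events must be decomposed by the number of participating clusters and by how their pre-merger sizes combine to hit the thresholds $(k,\ell)$, so that Lemma~\ref{lem:general-tail-bound-domination-by-iid-Z} extracts enough $d^{-\Omega(1)}$ decay to beat the union bound over the $O(d^2k)$ possible boundary vertices and merger configurations. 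This delicate interplay is precisely what motivates both the appearance of the $d^{-20}\psi_{k+1}\psi_{\ell+1}$ term in Lemma~\ref{lem:psi-tail-bound}(b) and the strengthened conditional formulation discussed in Remark~\ref{rmk:proof-conditional}.
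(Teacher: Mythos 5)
Your overall strategy --- compute the drift by decomposing into destruction flips inside $\wR_{t^-}(u)$ and creation flips at boundary vertices, use Lemma~\ref{lem:size:trifurcation} and Lemma~\ref{lem:conditional-flip-rate} for the outflow, and Lemma~\ref{lem:general-tail-bound-domination-by-iid-Z} plus Lemma~\ref{lem:psi-tail-bound} for the inflow --- is the right skeleton, and your calculation of the destruction term is essentially the paper's. However, there are two substantive gaps in the inflow analysis.

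First, your decomposition misses the creation case $\wR_{t^-}(u)=\emptyset$, $v=u$ flipping from $+1$ to $-1$. You restrict to ``$v$ lying in $\wR_{t^-}(u)$ or graph-adjacent to $\wR_{t^-}(u)$,'' but when $\wR_{t^-}(u)=\emptyset$ this set is empty while $u$ itself flipping can create a region of size $\geq k$ by merging the regions of its $d$ neighbors. The paper handles this as a separate case (D), split by whether $\wJ\leq 1$ or $\wJ\geq 2$, and this contributes a term $2de^{-2\beta d/7}\psi_{k-1}\psi_{\ell-1}$ that must be absorbed into the budget. Your drift computation would be incomplete without it.

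Second, and more critically, your positive-side bookkeeping relies entirely on the tail bounds of $\Psi$ and Lemma~\ref{lem:psi-tail-bound}(b), but the tail bounds alone cannot beat the negative drift. In the ``single cluster, no merger'' sub-case (the paper's $\wJ=0$ stratum), $v$'s flip grows $|\wR|,|\wC|$ by exactly $1$, so entry into $\mathcal{A}_t$ requires $|\wR_{t^-}(u)|=k-1$, which has probability $\approx\psi_{k-1}\psi_\ell\approx d^{200}\psi_k\psi_\ell$. Multiplied by the $\approx dk$ boundary vertices, this is $d^{201}k\psi_k\psi_\ell$ --- far exceeding the outflow $\approx k\psi_k\psi_\ell$. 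The missing ingredient is Observation~\ref{obs:minus-four-neighbors-outside-part}: because the rigid dynamics forbids flipping trifurcation points, a $+1$ boundary vertex $v$ has at most $3$ neighbors in $\wR_{t^-}(u)$ (and at most $3\wJ$ additional minus neighbors from other clusters) provided all relevant spacetime clusters have size $\leq R$. Combined with $d\geq 7$ and assumption (B), this yields the flip-rate bound $\widetilde{c}_{v,t}\leq e^{-2\beta d/7}\one\{\wJ=0\}+\one\{\wJ\geq 1\}$, and only the exponentially small factor $e^{-2\beta d/7}$ (for $\beta>C_0\log d/d$ with $C_0$ large) can suppress the $d^{200}$ blow-up. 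This is exactly why the rigid dynamics and trifurcation machinery are introduced; your proposal never invokes them, and without them the drift estimate fails.
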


\begin{proof}
Throughout, we assume that $t\notin \overline{\{s:(u,s)\in \bigcup_{i=1}^{m}C_i\}}$ as otherwise for $k,\ell \ge 1$ one has $\rho_t(u;k,\ell \mid \wEE)=0$. This guarantees that $\wX_{t^-}=\wX_{t}$, almost surely, even after conditioning on the event $\wEE$. Define the function of the past trajectory, whose expectation gives $\rho_t(u; k,\ell)$ by 
$$W_t(k,\ell)= [W_t(k,\ell)] ((\wX_s)_{s\leq t}) := \one\Big\{|\wR_{t}(u)|\geq k, |\wC_{t}(u)|=\ell\Big\}.$$

\noindent Recalling the definition of the flip rate $\widetilde{c}_{v,t}$ in \eqref{eq:rigid-conditional-flip-rate}, since this process's jumps are governed by a Poisson process, the time derivative is given by 
\begin{align}\label{eq:rho-time-derivative}
    \frac{d}{d t} \rho_t(u; k,\ell\mid \wEE) = \mathbb E \Big[ \sum_v \widetilde{c}_{v,t} \big(W_t^{v}(k,\ell) - W_{t^-}(k,\ell)\big) \mid\wEE\Big],
\end{align}
where $W_t^v$ to denote $W_t(k,\ell)$ evaluated on $(\wX_s^v)_{s\le t}$ instead of $(\wX_s)_{s\leq t}$, where $\wX_s^v(w)=\wX_s(w)$ for all $(w,s)\in V\times [0,t]$ except $\wX_t^v(v)\neq \wX_t(v)$, i.e. with a spin flip at $v$ at time $t$. Similarly, we use the notations $\wR_t^{v}(u), \wC_t^{v}(u), \wC_{u,\leq t}^v$ to respectively denote $\wR_{t}(u), \wC_t(u), \wC_{u,\leq t}^v$ evaluated on $(\wX_s^v)_{s\le t}$.

 We now split the sum on the right-hand side into sums over the following types of $v$ on the event $\wR_{t^-}(u) \ne \emptyset$: 
 \begin{enumerate}[label=(\Alph*)]
     \item $v\in \mathsf{Tri} :=  \mathsf{Tri}(\wR_{t^-}(v))$, in which case $v$ is flipping from $-1$ to $+1$;
     \item $v\in \mathsf{NTri}:= \wR_{t^-}(u)\setminus\mathsf{Tri}$, in which case $v$ is flipping from $-1$ to $+1$;
     \item $v\in \mathsf{Nbd}:=\{v\notin \wR_{t^-}(u): d(v,\wR_{t^-}(u)) = 1\}$, in which case $v$ is flipping from $+1$ to $-1$.
 \end{enumerate}
 Here, the vertices $v$ at distance at least two from $\wR_{t^-}(u)$ can be neglected since for those vertices, $\wR_t^{v}(u)=\wR_t(u)$ and $\wC_t^v(u)=\wC_t(u)$ (see Definition~\ref{def:minus-region}) so their contribution to the sum in~\eqref{eq:rho-time-derivative} is zero. On the event $\wR_{t^-}(u)=\emptyset$, the only case we must consider is
 \begin{enumerate}[label=(D)]
    \item $v=u$, in which case $v$ is flipping from $+1$ to $-1$.
 \end{enumerate}
 We consider these four different contributions case by case, then sum them all to bound~\eqref{eq:rho-time-derivative}:
 \begin{align}\label{eq:decomposition-of-rho-drift}
     \widetilde{\mathbb E}\Bigg[ \Big(\sum_{v\in \mathsf{Tri}}[\cdots]_v + \sum_{v\in \mathsf{NTri}}[\cdots]_v+ \sum_{v\in \mathsf{Nbd}} [\cdots]_v \Big)\mathbf{1}\{\wR_{t^-}(u)\ne \emptyset\} +  [\cdots]_u \mathbf 1\{\wR_{t^-}(u) = \emptyset\} \Bigg],
 \end{align}
 where we used the shorthand $[\cdots ]_v =\widetilde{c}_{v,t}\big(W_{t}^v(k,\ell)-W_{t^-}(k,\ell)\big)$, and $\widetilde{\mathbb E} = \mathbb E [\cdot \mid \wEE]$ for ease of notation. The bulk of the proof is dedicated to establishing $\frac{d}{d t} \rho_{t}(u;k,\ell \mid \wEE) <0$ under the assumptions $t < T$ and $\rho_t(u;k,\ell \mid \wEE)\ge \frac{1}{2}\psi_k \psi_{\ell}$. The simpler, crude bound $\frac{d}{d t} \rho_{t}(u;k,\ell \mid \wEE) \leq dk^2$ is provided at the conclusion of the proof.

 \medskip
 \noindent (A) \emph{$v\in \mathsf{Tri}$: Trifurcation points in the minus region of $u$.} Such a vertex does not flip under the rigid dynamics $(\wX_t)$ so the flip rate $\widetilde{c}_{v,t}$ is zero.

 \medskip
 \noindent (B) \emph{$v\in  \mathsf{NTri}$: Non-trifurcation points in the minus region of $u$.} These vertices contribute to the rapid shrinking of minus regions, resulting in a negative drift of $\rho_t(\cdot)$. Specifically, such a vertex flips from $-1$ to $+1$, decreasing $|\wR_{t^-}(u)|$ by exactly one while leaving $|\wC_{t^-}(u)|$ unchanged unless $|\wR_{t^-}(u)| = |\wC_{t^-}(u)| = 1$, in which case both quantities decrease to zero. That is, $|\wR_t^v(u)|=|\wR_{t^-}(u)|-1$ and $|\wC_t^v(u)|=|\wC_{t^-}(u)|$ holds  if $|\wC_{t^-}(u)|\geq 2$, and $|\wR_t^v(u)|=|\wC_t^v(u)|=0$ if $|\wC_{t^-}(u)|=1$. Thus, for any $\ell\geq k\geq 1$,
    \begin{align}\label{eq:contribution-ntri}
    \widetilde{\mathbb E} \Big[  \sum_{v\in \mathsf{NTri}} \widetilde{c}_{v,t} \big(W_t^v(k,\ell)-W_{t^-}(k,\ell)\big)\mathbf{1}\{\wR_{t^-}(u)\ne \emptyset\}  \Big]  
    = - \widetilde{\mathbb E} \Big [ \sum_{v\in \mathsf{NTri}} \widetilde{c}_{v,t} \one\Big\{|\wR_{t^-}(u)|=k,|\wC_{t^-}(u)|=\ell\Big\}\Big].
    \end{align}
    Since $v\in \mathsf{NTri}$, Lemma~\ref{lem:conditional-flip-rate} shows that $\widetilde{c}_{v,t}\geq(1+e^{-2\beta d/7})^{-1}$, $\widetilde{\P}\equiv \P(\cdot\mid \wEE)$-almost surely. Moreover, by Lemma~\ref{lem:size:trifurcation}, the number of vertices in $\mathsf{NTri}$ is at least $|\wR_{t^-}(u)|/2$. Finally, using our assumption that $t<T$ and $\rho_t(u;k,\ell \mid \wEE)\ge \frac{1}{2} \psi_k \psi_\ell$, we have for $(u,t)\notin \bigcup_{i=1}^m C_i$ that
    \[
    \widetilde{\P}\Big(|\wR_{t^-}(u)|=k,|\wC_{t^-}(u)|=\ell\Big)=\rho_t(u;k+1,\ell\mid\wEE)-\rho_t(u;k,\ell\mid\wEE)\leq \psi_{k+1}\psi_{\ell}-\frac{1}{2}\psi_k\psi_\ell,
    \]
    which is at most $-\psi_k\psi_{\ell}/3$. Putting these together, we have 
    \begin{align}\label{eq:Ntri-contribution}
        \widetilde{\mathbb E}\Big[ \sum_{v\in \mathsf{NTri}} [\cdots ]_v \mathbf{1}\{\wR_{t^-} \ne \emptyset\}\Big]  \le - \frac{k}{2} (1+e^{ - 2\beta d/7})^{-1}\cdot \frac{\psi_k \psi_{\ell}}{3}\le  - \frac{k}{7}\psi_k\psi_{\ell}\,,
    \end{align}
    so long as $d \ge 7$ and $\beta \ge 100/d$.

    \medskip
    \noindent (C) \emph{$v\in \mathsf{Nbd}$: Vertices at distance exactly one from $\wR_{t^-}(u)$.} These vertices cause the growth of a minus region and require the full strength of the conditional recursive assumption. In particular, we crucially utilize Lemma~\ref{lem:general-tail-bound-domination-by-iid-Z}. A neighbor $v$ of $\wR_{t^-}(u)$ must satisfy $\wX_{t^-}(v) = +1$ by Observation~\ref{obs:properties-of-regions}. Thus, its flipping will merge the neighboring minus spacetime clusters. To this end, let us enumerate the neighbors of $v$ that are not in $\wR_{t-}(u)$ by $w_1,\ldots, w_{d-1}$ (if there are fewer, then stop earlier). Then, flipping $v$ from $+1$ to $-1$ increases $|\wR_{t^-}(u)|\to |\wR_{t^-}(u)|+ j$, where $j=1+|\bigcup_{i=1}^{d-1}\wR_{t^-}(w_i)|$, and increases $|\wC_{t^-}(u)|\to |\wC_{t^-}(u)\cup \{v\}\cup \bigcup_{i=1}^{d-1}\wC_{t^-}(w_i)|$. Thus, for $v\in \textsf{Nbd}$, $\big(W_t^{v}(k,\ell)-W_{t^-}(k,\ell)\big)\one\big\{|\wR_{t^-}(u)|\neq \emptyset\big\}$ is bounded above by
    \begin{align*} 
    \sum_{\substack{1\le j\le k-1 \\ 0\le j'\le \ell-1}}\mathbf 1\bigg\{|\wR_{t^-}(u)|\in[k-j,k-1]\,,\,|\wC_{t^-}(u)|=\ell-j'\,,\,\Big|\bigcup_{i=1}^{d-1} \wR_{t^-}(w_i)\Big|= j-1\,,\,\Big|\bigcup_{i=1}^{d-1} \wC_{t^-}(w_i)\Big| \in [j'-1,\ell]\bigg\}.
\end{align*}
Bounding the size of $\textsf{Nbd}$ on the indicator $\one\{|\wR_{t^-}(u)|\leq k-1\}$ by $dk$, $\widetilde{\mathbb E}\big[\sum_{v\in \textsf{Nbd}}  [\cdots ]_v \mathbf{1}\{\wR_{t^-}(u) \ne \emptyset\}\big]$ is bounded above by 
\begin{align}\label{eq:Nbd-contribution}
   dk\sum_{\substack{1\le j\le k-1 \\ 0\le j'\le \ell-1}} \widetilde{\mathbb E}\bigg[W_{t^-}(k-j,\ell-j')\sup_{v\in \textsf{Nbd}}\widetilde{\E}\Big[ \widetilde{c}_{v,t}  \mathbf 1\Big\{|\bigcup_i\wR_{t^-}(w_i)|=j-1\,,\,| \bigcup_i \wC_{t^-}(w_i)| \in[j'-1,\ell]\Big\}\Big|  \widetilde{\cF}_{u,t^{-}}\Big] \bigg],
\end{align}
 where $\wF_{u,t^-}=\sigma(\wC_{u,<t})$. Note that if we denote the number of distinct non-empty $\wR_{t^-}(w_i)$'s by $\wJ$ as in \eqref{eq:def-distinct-regions}, then the number of minus neighbors of $v$ is at most $3+3\wJ$ by Observation~\ref{obs:minus-four-neighbors-outside-part} on the event that $|\wC_{t-}(u)|\leq \ell \le R$ and $|\wC_{t^-}(w_i)|\leq \ell \le R$ for any $i\leq d-1$. Combining this with Lemma~\ref{lem:conditional-flip-rate} and using $d\geq 7$, we have on the event $|\wC_{t^-}(u)|\leq \ell$ and $|\bigcup_{i=1}^{d-1}\wC_{t^-}(w_i)|\leq \ell$,
\[
\widetilde{c}_{v,t}\leq e^{ - 2\beta d/7} \mathbf 1\{\wJ= 0\} + \mathbf 1\{\wJ\ge 1\}\,.
\]
(Here we used the fact that on $\widetilde{\mathcal J} = 0$, then $v$ has at most $3 \le 3d/7$ minus neighbors, and assumption (B) of Definition~\ref{def:general-ising-process} implies $1-p_+ \le e^{ - 2\beta d/7}$.)

Plugging into the previous display, we observe that on $\one\{\wJ=0\}$, we only get a contribution from the $j=1$ and $j'\in \{0,1\}$ summands. In particular, reducing the sum to those two terms, and bounding all indicators by $1$ otherwise, the total contribution to \eqref{eq:Nbd-contribution} from $\one\{\wJ=0\}$ is at most
\begin{align}\label{eq:dist1-J0-contribution}
    dk e^{ - 2\beta d/7} \widetilde{\mathbb E}\Big[  W_{t^-}(k-1,\ell) +W_{t^-}(k-1,\ell-1)\Big]\le dk e^{ - 2\beta d/7} \psi_{k-1}(\psi_\ell + \psi_{\ell-1})\,,
\end{align}
since we assumed $t<T$. For the contribution from the $\mathbf 1\{\wJ \ge 1\}$, we must have $\big|\bigcup_{j=1}^{d-1}\wR_{t^-}(w_i)\big|\geq 1$ and $\big|\bigcup_{j=1}^{d-1}\wC_{t^-}(w_i)\big|\geq 1$ on the event $\{\wJ\geq 1\}$. Moreover, observe that if $(w_a,t^-)\in \bigcup_{i=1}^{m} C_i$ for some $a\leq d-1$, which is $\wF_{t^-}$ measurable, then $\widetilde{c}_{v,t}=0$. Indeed, conditioning on $\wEE$ freezes spacetime boundary $\bigcup_{i=1}^{m} \partial_o C_i$ to be $+1$, thus $(v,t)$ lying in the outer spatial boundary (cf. Definition~\ref{def:boundary}) means that a.s., it cannot flip conditional on $\wEE$. Thus, $\widetilde{c}_{v,t}=0$ unless the event $\bigcap_{i=1}^{d-1}\mathsf{NI}_{w_i, \vec{u},\vec{t}}(t^-)$ holds. Further, since $(w_i,t^-)\notin \wC_{u,<t}$ holds, we may restrict to the event $\bigcap_{a=1}^{d-1}\mathsf{NI}_{w_i,(\vec{u},u),(\vec{t},t^-)}(t^-)$, and appeal to item (1) of Lemma~\ref{lem:general-tail-bound-domination-by-iid-Z} with $((\vec{u},u),(\vec{t},t^-))$ replacing $(\vec{u},\vec{t})$, to bound the contribution to \eqref{eq:Nbd-contribution} from $\one\{\wJ\geq 1\}$ by
\begin{align}\label{eq:dist1-J1-contribution}
\begin{split}
   &(1+d^{-10})dk \sum_{\substack{2\le j\le k-1  \\ 0\le j'\le \ell-1}} \mathbb P\bigg( \sum_{i=1}^{d-1} Z_i \ge \big(j-1,(j'-1)\vee 1\big)\bigg) W_t (k-j,\ell-j')\\
   & \qquad \qquad \qquad \qquad \qquad \leq 2(1+d^{-10})d^3 k \sum_{\substack{2\le j\le k-1 \\ 0\le j'\le \ell-1}}  \psi_{j-1} \psi_{(j'-1)\vee 1} \psi_{k-j}\psi_{\ell-j'}\le d^{ - 10} k \psi_k \psi_\ell\,,
\end{split}
\end{align}
where $(Z_i)_i$ are i.i.d.\ from $\Psi^{\otimes 2}$, the first bound is due to item (a) of Lemma~\ref{lem:psi-tail-bound} and $t<T$ and the second bound is due to $\sum_{j'=0}^{2}\psi_{\ell-j'}\leq 3\psi_{\ell-2}$ and using item (b) of Lemma~\ref{lem:psi-tail-bound} with $q=2$.

\medskip
\noindent(D) \emph{The vertex $u$ when $\wR_{t^-}(u) = \emptyset$.} 
The final contribution to consider comes from the case when $\wR_{t^-}(u) = \emptyset$, and $v=u$ is flipping from $+1$ to $-1$. Letting $w_1,\ldots, w_d$ be the neighbors of $u$, its flipping increases $0\mapsto j$ where $j = 1+|\bigcup_{i=1}^{d} \wR_{t^-}(w_i)|$ and $0\mapsto \big|\bigcup_{i=1}^{d}\wC_{t^-}(w_i)\bigcup\{u\}\big|$. Thus, $\widetilde{\E}\big[[\cdots]_u\one\{\wR_{t^-}(u)=\emptyset\}\big]$ is at most
\begin{align}\label{eq:contribution-from-empty}
\widetilde{\E}\bigg[\widetilde{c}_{u,t}\one\Big\{\wR_{t^-}(u)=\emptyset\,,\,\Big|\bigcup_{i=1}^{d}\wR_{t^-}(w_i)\Big|\geq k-1\,,\, \Big|\bigcup_{i=1}^{d}\wC_{t^-}(w_i)\Big|\in\{\ell-1,\ell\}\Big\}\bigg]\,.
\end{align}
Meanwhile, letting $\wJ$ as in \eqref{eq:def-distinct-regions} with $q=d$, the number of minus neighbors of $u$ is at most $3\wJ$ by Observation~\ref{obs:minus-four-neighbors-outside-part} on the event $|\bigcup_{i=1}^{d}\wC_{t^-}(w_i)|\leq \ell$. Thus, by Lemma~\ref{lem:conditional-flip-rate}, we have on the event  $|\bigcup_{i=1}^{d}\wC_{t^-}(w_i)|\leq \ell$ that
\begin{align*}
   \widetilde{c}_{u,t} \le e^{ - 2\beta d/7} \mathbf 1\{\wJ\le 1\} + \mathbf 1\{\wJ \ge 2\}\,,
\end{align*}
By bounding $\one\{\wR_{t^-}(u)=\emptyset\}\leq 1$ and using item (1) of Lemma~\ref{lem:general-tail-bound-domination-by-iid-Z} (see also Remark~\ref{rmk:k-zero}), the contribution to \eqref{eq:contribution-from-empty} from $\one\{\wJ\leq 1\}$ is at most
\begin{align}\label{eq:contribution-from-u-le1}
    e^{ - 2\beta d/7} \mathbb P\Big(\sum_{i=1}^d Z_i \ge (k-1,\ell-1)\Big) \le 2d e^{ - 2\beta d/7} \psi_{k-1}\psi_{\ell-1}\,,
\end{align}
where the final inequality holds by Lemma~\ref{lem:psi-tail-bound}. 
For the contribution from the $\mathbf 1\{\wJ\ge 2\}$, by item (2) of Lemma~\ref{lem:general-tail-bound-domination-by-iid-Z} and item (b) of Lemma~\ref{lem:psi-tail-bound}, if $J=\sum_{i=1}^{q}\one\{Z_i\geq (1,1)\}$, we get at most 
\begin{align}\label{eq:contribution-from-u-ge2}
    \mathbb P\Big( J\ge 2, \sum_{i=1}^d Z_i \ge (k-1,\ell-1) \Big) \le d^{-10} \psi_{k}\psi_{\ell}\,.
 \end{align}

We now combine the contributions from~\eqref{eq:Ntri-contribution},\eqref{eq:dist1-J0-contribution},\eqref{eq:dist1-J1-contribution},\eqref{eq:contribution-from-u-le1}, and~\eqref{eq:contribution-from-u-ge2} to get: 
\begin{align*}
    \frac{d}{dt} \rho_t(u;k,\ell\mid \wEE) & \le - \frac{k}{7} \psi_k \psi_\ell + dk e^{ - 2\beta d/7} \psi_{k-1}(\psi_{\ell} + \psi_{\ell -1}) + d^{-10} k \psi_k \psi_\ell \\ 
    & \qquad + 2d e^{ - 2\beta d/7}\psi_{k-1} \psi_{\ell-1} + d^{-10} \psi_{k}\psi_{\ell}\,.
\end{align*}
Using the fact that $\psi_{k-1} \le 4d^{100} \psi_{k}$, as long as $\beta >\frac{C_0\log d}{d}$ for a sufficiently large ($d$-independent) constant $C_0$, we have that the right-hand side is at most $- \frac{k}{10} \psi_k \psi_\ell <0$ as claimed.

It remains to prove $\frac{d}{dt} \rho_t(u;k,\ell \mid \wE_{\vec{u},\vec{t},\vec{C}}) \le dk^2$ for all $t$ in the interior of $ [0,t_m] \setminus \{s:  (u,s)\in\bigcup_{i=1}^{m}C_i\}$. The expression in~\eqref{eq:rho-time-derivative} and the decomposition of~\eqref{eq:decomposition-of-rho-drift} did not use $t<T$. As before, the contribution from $v\in \textsf{Tri}$ is $0$ while the contribution from $v\in \textsf{NTri}$ is non-positive as seen in \eqref{eq:contribution-ntri}. The contribution from $v\in \textsf{Nbd}$ follows by starting with~\eqref{eq:Nbd-contribution} (again, no use of $t<T$). Note that the events in $W_{t^-}(k,\ell)$ and $W_{t^-}(k,\ell')$ are disjoint for $\ell\ne \ell'$, so we can bound $\sum_{j'\le \ell-1} W_{t^-}(k-j,\ell-j')$ by $1$. Also, since $v\in \textsf{Nbd}$ is flipping from $+1$ to $-1$, we have $\widetilde{c}_{v,t}\leq 1$ by Lemma~\ref{lem:conditional-flip-rate}. Consequently, the contribution from $v\in \textsf{Nbd}$ is at most $dk^2\widetilde{\E}[\one\{\wR_{t^-}(u)\neq\emptyset]$. By the same argument, the fourth sum in \eqref{eq:decomposition-of-rho-drift} is upper bounded by \eqref{eq:contribution-from-empty}, which is at most $\widetilde{\E}[\one\{\wR_{t^-}(u)=\emptyset\}]$. Combining all four contributions yields the claimed bound $\frac{d}{dt} \rho_t(u;k,\ell \mid \wE_{\vec{u},\vec{t},\vec{C}})\leq dk^2$.
\end{proof}

\begin{proof}[\textbf{\emph{Proof of Proposition~\ref{prop:exp-tail-minus-regions-conditional}}}]
Recall the definition of $T$ in \eqref{eq:def-T} and suppose by contradiction that $T<\infty$. Consider any $t\leq T$, $1\leq k\leq \ell\leq R$, $\vec{u},\vec{t}$ such that $t_m>t$, and $(\vec{u},\vec{t})$-admissible $\vec{C}$. Since $\rho_t(u;k,\ell\mid \wEE)=0$ for all $t\in \{s:(u,s)\in \bigcup_{i=1}^{m}C_i\}$ and $t\mapsto \rho_t(u;k,\ell\mid \wEE)$ is continuous, Lemma~\ref{lem:NTS-rho-negative-drift} implies that $\rho_t(u;k,\ell\mid \wEE)\leq \psi_k \psi_{\ell}/2$ for $t\leq T$. In particular, whenever $T<t_m$, $\rho_T(u;k,\ell\mid \wEE)\leq \psi_k\psi_{\ell}/2$ (this includes $T=0$ since $\rho_0(u;k,\ell)=0$). Again by Lemma~\ref{lem:NTS-rho-negative-drift}, for $\eps>0$ small enough such that $T+\eps<t_m$, we have $\rho_{T+\eps}(u;k,\ell\mid \wEE)\leq \psi_k \psi_{\ell}/2+dk^2\eps$. Thus, $\max_{u\in V} \rho_{T+\eps}(u;k,\ell)\leq \psi_{k}\psi_{\ell}$ for any $1\leq k \leq \ell \leq R$ holds for $\eps>0$ such that $dR^2\eps\leq \psi_R^2/2$. This contradicts the definition of $T$. Therefore $T=\infty$, which concludes the proof.
\end{proof}
\subsection{Properties of convolutions of $\Psi$}\label{sec:deferred-proof-of-psi-bounds}

In this section, we prove Lemma~\ref{lem:psi-tail-bound}. To prove item (b), we consider an exponential tilt of $\Psi$, where the following result will be useful.
\begin{lemma}\label{lem:simple}
    Fix constants $\Gamma_0 \in [0,1]$ and $\Gamma_1 \geq0$. Consider i.i.d. random variables $(Y_i)_{i\geq 1}\in \Z_{\geq 0}$ distributed as $\P(Y_i=k)=\phi_k$, where $\phi_k\equiv \Gamma_0 k^{-2}d^{-1000-\Gamma_1 k}$ for $k\geq 1$, and $\phi_0\equiv 1-\sum_{k\geq 1} \phi_k$. Then, for any integers $q\geq 1$ and $k\geq 1$,
    \begin{equation}
    \label{eq:estimate:convolution}
    \P\Big(\sum_{i=1}^{q}Y_i=k\Big)\leq f_{q}\cdot \phi_k\,,~~\textnormal{where}~~f_{q}=\sum_{i=0}^{q-1} (1+16d^{-1000})^i.
    \end{equation}
\end{lemma}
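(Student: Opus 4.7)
\textbf{Proof plan for Lemma~\ref{lem:simple}.} The plan is to induct on $q$ with the sharp comparison $p_q(k) \le f_q \phi_k$, where $p_q(k) := \mathbb P(\sum_{i=1}^q Y_i = k)$. The base case $q=1$ is immediate since $p_1(k) = \phi_k$ and $f_1 = 1$. For the inductive step, I would condition on the value of $Y_q$ to obtain the convolution identity
\begin{equation*}
    p_q(k) \;=\; \sum_{j=0}^{k} p_{q-1}(j)\, \phi_{k-j} \;=\; p_{q-1}(0)\,\phi_k \;+\; p_{q-1}(k)\,\phi_0 \;+\; \sum_{j=1}^{k-1} p_{q-1}(j)\,\phi_{k-j},
\end{equation*}
and then bound each of the three terms. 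Using $p_{q-1}(0) \le 1$, $\phi_0 \le 1$, and the inductive hypothesis $p_{q-1}(j) \le f_{q-1} \phi_j$ for $j \ge 1$, this gives
\begin{equation*}
    p_q(k) \;\le\; \phi_k \;+\; f_{q-1}\phi_k \;+\; f_{q-1} \sum_{j=1}^{k-1} \phi_j \phi_{k-j}.
\end{equation*}

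The core of the argument is then the convolution estimate $\sum_{j=1}^{k-1} \phi_j \phi_{k-j} \le 16\, d^{-1000}\, \phi_k$. To prove this, I would factor out the exponential $d^{-1000-\Gamma_1 k}$ (which pulls through the convolution since the exponents in $j$ and $k-j$ add to $k$), reducing the claim to showing $\sum_{j=1}^{k-1} \frac{k^2}{j^2(k-j)^2} \le 16$. Via the partial-fraction identity $\frac{k}{j(k-j)} = \frac{1}{j} + \frac{1}{k-j}$, this sum splits as $2\sum_{j=1}^{k-1} j^{-2} + 2\sum_{j=1}^{k-1} \frac{1}{j(k-j)}$; the first piece is bounded by $2\zeta(2) < 4$ and the second by $\frac{4 H_{k-1}}{k}$ which is uniformly bounded. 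The constant $16$ is comfortable and absorbs the factor $\Gamma_0 \le 1$.

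Substituting this convolution bound back yields $p_q(k) \le \phi_k\big(1 + f_{q-1}(1 + 16 d^{-1000})\big)$, and it remains only to verify the recurrence $f_q = 1 + (1+16d^{-1000}) f_{q-1}$, which is immediate from the telescoping form $f_q = \sum_{i=0}^{q-1}(1+16d^{-1000})^i$. Closing the induction completes the proof.

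The argument is essentially routine once the right inductive statement is isolated. The only point requiring genuine care is the convolution estimate: a naive bound on $\sum \phi_j \phi_{k-j}$ would overshoot $\phi_k$ because the polynomial prefactors $j^{-2}(k-j)^{-2}$ do not directly dominate $k^{-2}$. The crucial observation is that this mismatch is handled by the spare factor $d^{-1000}$ coming from \emph{one} copy of the exponential tilt in $\phi_{k-j}\phi_j$ compared to $\phi_k$; this $d^{-1000}$ is precisely what makes $f_q$ grow geometrically with ratio extremely close to $1$, which is exactly the form needed for the applications in Lemma~\ref{lem:psi-tail-bound}.
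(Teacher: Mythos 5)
Your proposal is correct and follows essentially the same inductive argument as the paper: same induction on $q$, same three-term decomposition of the convolution, and the same key estimate $\sum_{j=1}^{k-1}\phi_j\phi_{k-j}\le 16\,d^{-1000}\phi_k$ plugged into the recurrence $f_q=1+(1+16d^{-1000})f_{q-1}$. The only cosmetic difference is in how the numerical constant is obtained: you use the partial-fraction identity $\frac{k}{j(k-j)}=\frac{1}{j}+\frac{1}{k-j}$ to split the sum, while the paper exploits symmetry to restrict to $j\le\lceil(k-1)/2\rceil$ and bounds $(1-j/k)^{-2}\le 4$; both routes are routine and interchangeable.
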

\begin{proof}
    We proceed by induction on $q\geq 1$. The base case is trivial. Assume $\P(\sum_{i=1}^{q}Y_i=k)\leq f_{q}\cdot \phi_k$ for any $k\geq 1$. Then,
    \[\P\Big(\sum_{i=1}^{q+1}Y_i=k\Big)=\sum_{j=0}^{k}\P\Big(\sum_{i=1}^{q}Y_i=k-j\Big)\cdot\P(Y_{q+1}=j)\leq (f_{q}+1)\phi_k+f_{q}\sum_{j=1}^{k-1}\phi_{j}\phi_{k-j}\,.
    \]
    Using $\Gamma_0 \leq 1$, the sum in the right-hand side is at most
\begin{equation}\label{eq:technical}
2d^{-1000}f_{q}\phi_k \sum_{j=1}^{\lceil \frac{k-1}{2}\rceil}j^{-2}(1-j/k)^{-2}\leq 16d^{-1000} f_q\phi_k,
     \end{equation}
    where we used that the above sum is at most $4\sum_{m\geq 1}m^{-2}<8$. This concludes the induction.
    \end{proof}
 
    \begin{proof}[\textbf{\emph{Proof of Lemma~\ref{lem:psi-tail-bound}}}] Throughout, we let $(Y_i)_{i\geq 1}\stackrel{i.i.d.}{\sim}\Psi$. To prove item (a), it suffices to prove that $\P(\sum_{i=1}^{q}Y_i\geq k)\le \sqrt{2} q \psi_k$ for all $k\geq 0$ and $q\leq d$. Since $1\leq (1+d^{-10})\psi_0$, this is trivial for $k=0$. For $k\geq 1$, Lemma~\ref{lem:simple} applied with $\Gamma_0=1$ and $\Gamma_1=100$ gives the desired conclusion by noting that $f_q\leq \sqrt{2}q$ for $q\leq d$. 

    For item (b), denote $Z_i=(Y_i,Y_i')$ and let $I_q=\sum_{i=1}^{q}\one\{Y_i\geq 1\}$ and $I'_q=\sum_{i=1}^{d}\one\{Y_i'\geq 1\}$. Since the event $\{J\geq 2\}$ is contained in $\{I_q\geq 2, I'_q\geq 2\}$, it suffices to prove for $k\geq 0$ that
     \[
     \P\Big(I_q\geq 2\,,\, \sum_{i=1}^{q}Y_i\geq k\Big)\leq d^{-10}\psi_{k+1}.
     \]
     Since $I_q\geq 2$ implies $\sum_{i=1}^{q}Y_i\geq 2$ and $\psi_k$'s are decreasing in $k$, it suffices to consider $k\geq 2$.
 Consider the tilted random variable $(\widehat Y_i)_{i \geq 1}$ which are independent and distributed as 
    \[
    \P\big(\widehat Y_i=j\big)= \frac{\psi_{j} \cdot d^{100j}}{\E[ d^{100 Y_i}]},\quad j\geq 0\,. 
    \]
 Let $\widehat I_q=\sum_{i=1}^{q}\one\{\widehat Y_i \geq 1\}$ for $q\geq 1$. Then,
    \begin{align}\label{eq:exponential-tilt}
    \P \Big(I_q \geq 2\,,\,\sum_{i=1}^{q} Y_i \geq k \Big)
    =\left(\E[d^{100 Y_1}]\right)^q \sum_{j=k}^{\infty}  d^{-100j}\P\Big(\widehat I_q\geq 2\,,\, \sum_{i=1}^{q}\widehat Y_i=j \Big)\,.
    \end{align}
    Note that $\E[d^{100 Y_i}]=\psi_0+\sum_{j \geq 1} j^{-2}d^{-1000}\leq 1+ 2d^{-1000}$, so $\left(\E[d^{100 Y_1}]\right)^q\leq 2$ for all $q\leq d$. Thus, the proof is complete if we show that for all $j\geq 2$ and all $q$ such that $1\leq q\leq d$
    \begin{equation}\label{eq:final:goal}
    \P\Big(\widehat I_q\geq 2\,,\, \sum_{i=1}^{q}\widehat Y_i= j\Big)\leq 8q^2 j^{-2}d^{-2000}\,.
    \end{equation}
   We prove this by induction on $q\geq 1$. For $q=1$, the left-hand side is $0$. Assuming~\eqref{eq:final:goal} for some $q$ such that $q\leq d$, note that
   \[
   \begin{split}
   \P\Big(\widehat I_{q+1} \geq 2\,,\, \sum_{i=1}^{q+1}\widehat Y_i= j\Big)
\leq 8q^2 j^{-2}d^{-2000}+\sum_{j'=1}^{j-1}\P\Big(\sum_{i=1}^{q} \widehat Y_i=j-j'\Big)\cdot\P\Big( \widehat  Y_{q+1}=j'\Big),
   \end{split}
   \]
   where we used the inductive hypothesis to bound the contribution from the event $\{\widehat Y_{q+1}=0\}$. Observe that the distribution of $\widehat Y_i$ is of the form given in Lemma~\ref{lem:simple} where $\Gamma_0=\big(\E[d^{100 Y_i}]\big)^{-1}\leq 1$ and $\Gamma_1=0$, thus
   \[
  \P\Big(\sum_{i=1}^{q} \widehat Y_i=j-j'\Big)\leq f_{q}\cdot (j-j')^{-2}d^{-1000}\leq 2q \cdot (j-j')^{-2}d^{-1000},
   \]
   where we used $q \leq d$ in the last inequality. Plugging this into the previous display,
   \[
    \P\Big(\widehat I_{q+1} \geq 2\,,\, \sum_{i=1}^{q+1}\widehat Y_i= j\Big)
\leq 8q^{2} j^{-2}d^{-2000}+2q d^{-2000}\sum_{j'=1}^{j-1} (j-j')^{-2}j'^{-2}\leq 8(q+1)^2 j^{-2}d^{-2000}\,,
   \]
  where the last inequality holds since $\sum_{j'=1}^{j-1} (j-j')^{-2}j'^{-2}\leq 4j^{-2}$ holds by the same argument as the one used in \eqref{eq:technical}. Therefore, \eqref{eq:final:goal} holds, which concludes the proof.
\end{proof}

\subsection{Lower bound on time to develop logarithmically sized minus regions}
We can use Proposition~\ref{prop:exp-tails-minus-regions} to lower bound the time it takes for the rigid dynamics to develop a minus spacetime cluster of size $R = \frac{1}{4} \log_d n$, and guarantee in that way that for polynomially long times, all rigid dynamics' clusters are small, and obey tail bounds of the form of $\Psi$ from~\eqref{eq:psi_k}. 

For that purpose, define the hitting time 
\begin{align}\label{eq:tau-treelike}
    \tau^R = \inf\big\{t\ge 0: \max_v |\wC_t(v)| \ge R\big\}\,.
\end{align}
Since $\widetilde{R}_t(u)\subset \widetilde {\cC}_t(u)$ and the latter is a connected subset of $G$, this implies that $\diam(\widetilde{\cR}_t(u))< R$ while $t<\tau^R$.

\begin{cor}\label{cor:tau-is-polynomially-large}
        Fix $d\ge 7$, $\beta \ge \frac{C_0 \log d}{d}$ for $C_0$ a large ($d$-independent) constant, and suppose $G$ is $1$-locally-treelike. Then 
        \begin{align*}
            \mathbb P(\tau^R \le n^2) = O (n^{-10})\,.
        \end{align*}
\end{cor}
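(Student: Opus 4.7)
My plan is to bound $\mathbb{P}(\tau^R \le n^2)$ by applying Markov's inequality to a suitably chosen count of flip events in $[0, n^2 \wedge \tau^R]$, and then estimating the expected count via Campbell's formula for the Poisson clocks combined with Lemma~\ref{lem:general-tail-bound-domination-by-iid-Z}(3) at the threshold cluster size $\ell = R - 1$. The strong polynomial decay $\psi_{R-1} \le (R-1)^{-2} d^{-900} n^{-25}$, obtained from $d^{-100 R} = n^{-25}$ since $R = \tfrac{1}{4}\log_d n$, does the heavy lifting.

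The first step is to observe that $|\wC_t(v)|$ can only increase at a $+1 \to -1$ flip: a $-1 \to +1$ flip removes a vertex from the currently alive region but leaves the spacetime cluster structure of $\mathcal{S}_{\le t}$ unchanged, while a $+1 \to -1$ flip at $u$ adds the single spacetime point $(u, t)$ and potentially merges adjacent minus spacetime clusters. Consequently $\tau^R$ almost surely coincides with a clock ring time at some vertex $v^{*}$, at which $\widetilde{X}_{\tau^R-}(v^{*}) = +1$ flips to $-1$ and the resulting spacetime cluster has vertex projection $\{v^{*}\}\cup \bigcup_{w \sim v^{*}} \wC_{\tau^R-}(w)$ of size at least $R$, forcing $\big|\bigcup_{w \sim v^{*}} \wC_{\tau^R-}(w)\big| \ge R - 1$. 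Meanwhile, by the definition of $\tau^R$, at every time $t \le \tau^R$ we have $\max_u |\wC_{t-}(u)| \le R - 1$.

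Let $N$ denote the number of clock rings $(v, t)$ with $t \le n^2 \wedge \tau^R$ for which $\big|\bigcup_{w \sim v} \wC_{t-}(w)\big| \ge R - 1$. The previous paragraph gives $\{\tau^R \le n^2\} \subseteq \{N \ge 1\}$, so Markov's inequality yields $\mathbb{P}(\tau^R \le n^2) \le \mathbb{E}[N]$, and Campbell's formula applied to the rate-$1$ vertex clocks gives
\[
\mathbb{E}[N] \le \int_0^{n^2} \sum_{v \in V} \mathbb{P}\Big(\big|\textstyle\bigcup_{w \sim v}\wC_{t-}(w)\big| \ge R - 1,\ \max_{w \sim v} |\wC_{t-}(w)| \le R - 1\Big)\,dt.
\]
For each $(v, t)$ I apply Lemma~\ref{lem:general-tail-bound-domination-by-iid-Z}(3) to the $d$ neighbors of $v$, with empty conditioning ($m = 0$) and parameters $q = d$, $k = 1$, $\ell = R - 1$; the required $|\bigcup \wR_{t-}(w_i)| \ge 1$ is automatic since a union of clusters of total size $\ge R - 1 \ge 1$ must contain a currently-minus vertex $w_i$. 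Combining with Lemma~\ref{lem:psi-tail-bound}(a) yields the pointwise bound $3 d^2 \psi_1 \psi_{R - 1}$, and substituting $\psi_1 = d^{-1100}$ gives
\[
\mathbb{E}[N] \le n^2 \cdot n \cdot 3 d^2 \psi_1 \psi_{R - 1} \le 3(R - 1)^{-2} d^{-1998} n^{-22} = O(n^{-22}),
\]
comfortably smaller than $n^{-10}$ for all $n$ large enough that $R \ge 2$; the finitely many smaller $n$ are handled by adjusting the implicit constant. I expect no substantive obstacle; the one detail needing care is justifying Campbell's identity with the stopping time $\tau^R$, which follows from the $\widetilde{\mathcal{F}}_{t-}$-predictability of the integrand, and from Proposition~\ref{prop:exp-tail-minus-regions-conditional} we know $T = \infty$ so Lemma~\ref{lem:general-tail-bound-domination-by-iid-Z}(3) is unconditionally applicable at every $t$.
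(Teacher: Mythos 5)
Your argument is correct and rests on the same key ingredients as the paper's proof: the observation that $\tau^R$ can only be realized at a $+1\to-1$ clock ring at a vertex $v^*$ whose neighboring spacetime clusters have union of size at least $R-1$ while each individually has size at most $R-1$, followed by Lemma~\ref{lem:general-tail-bound-domination-by-iid-Z}(3) with $\vec u=\emptyset$ and the tail bound of Lemma~\ref{lem:psi-tail-bound}(a) at $\ell=R-1$. The genuine difference is the aggregation step. The paper discretizes $[0,t]$ into a mesh of width $\Delta=n^{-12}$, union-bounds over $v\in V$ and mesh points, and separately controls the event of two clock rings landing in the same mesh interval. You instead count the offending flips directly, apply Markov's inequality, and invoke the compensator (Campbell) identity for the marked Poisson clock process, which is justified because the integrand — including the stopping-time indicator $\mathbf 1\{t\le \tau^R\}$, since $\{\tau^R\ge t\}\in\cF_{t^-}$ — is predictable. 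This route is cleaner: it avoids the mesh discretization and the ancillary two-clock-ring estimate entirely, and as a byproduct yields the sharper bound $O(n^{-22})$ rather than $O(n^{-10})$. One small point worth keeping explicit is the justification that the $\wR$-condition $k=1$ is automatically satisfied on the event of interest (a nonempty union of $\wC_{t^-}(w_i)$'s forces some $w_i$ to be minus, hence $w_i\in\wR_{t^-}(w_i)$), which you handle correctly; and you correctly note that $T=\infty$ from the proof of Proposition~\ref{prop:exp-tail-minus-regions-conditional} is what makes Lemma~\ref{lem:general-tail-bound-domination-by-iid-Z} applicable at all $t$.
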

   \begin{proof}
        Let $t>0$, $\Delta = n^{-12}$ and begin by constructing a time mesh, $$ M = \{\Delta, 2\Delta,...,tn^{12}\Delta\} \subset  [0,t]\,.$$
For each $i$, the probability that there are two clock rings in $V\times [i\Delta, (i+1)\Delta]$ is at most 
\begin{align*}
    \mathbb P(|\mathcal T\cap [i\Delta, (i+1)\Delta]|\ge 2) \le \mathbb P(\text{Pois}(n\Delta) \ge 2)\le \exp( - 2 \log n^{-12} +2) = O(n^{-24})\,.
\end{align*}
By a union bound with $tn^{12}$ terms, the probability that there exists $i$ such that there are two clock rings in $V\times [i\Delta, (i+1)\Delta]$ is then at most $O(t n^{-12})$. 

Now notice that $\tau^R$ must happen at a clock-ring time, and on the event of no two clock rings in any segment, at the mesh time (call it $s$) immediately prior to that clock-ring time, there must be some vertex $v$ whose $d$ adjacent vertices $w_1,...,w_d$ are such that $\max_{i} |\wC_{s}(w_i)| \le R-1$ but $|\bigcup_{i=1}^d \wC_{s}(w_i)|\ge R-1$.  
We can therefore bound  
\begin{align*}
    \mathbb P(\tau^R \le t) &\le \bigcup_{v} \bigcup_{s\in M} \mathbb P\big( \max_{w_i \sim v} |\wC_s(w_i)| \le R-1, |\bigcup_{w_i \sim v} \wC_s(w_i)| \ge R-1\big) \ + O(t n^{-12}) \,.
\end{align*}
Applying item (3) of Lemma~\ref{lem:general-tail-bound-domination-by-iid-Z} with $\vec{u} = \emptyset$, and the tail bounds on $\Psi$ from Lemma~\ref{lem:psi-tail-bound}, we get 
\begin{align*}
    \mathbb P(\tau^R\le t) \le (1+d^{-7}) n |M| \mathbb P\Big(\sum_{i=1}^{d} Z_i \ge (1,R-1)\Big) + O(tn^{-12}) \le 2d^2t n^{13} \psi_{R-1} + O(tn^{-12})\,, 
\end{align*}
where $Z_i \sim \Psi^{\otimes 2}$ are i.i.d. 
Recalling $\psi_k$ and plugging in $R = \frac{1}{4} \log_{d} n$, we find that for $n$ sufficiently large, this is at most $2 d t n^{13} d^{-1000- \frac{100}{4}\log_d n} + O(tn^{-12})$. This is $O(tn^{-12})$, and we get the desired by setting $t= n^2$.  
    \end{proof}

    \begin{cor}\label{cor:expectation-bound}
        We have for all $t\ge 0$ and $r\ge 1$, that $\mathbb P(|\wR_{t}(w)|\ge r)\le \sum_{k\ge r}\psi_{k} + O(tn^{-12})$. In turn, for every $w\in V$ and every $t\le n^2$, we have $
            \mathbb E[ |\wR_t(w)|] \le d^{ - 100}$.
    \end{cor}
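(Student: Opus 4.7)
The plan is to assemble the stated bounds as a direct corollary of Proposition~\ref{prop:exp-tails-minus-regions} combined with Corollary~\ref{cor:tau-is-polynomially-large}, via a decomposition on the event $\{\tau^R > t\}$. Writing
\[
\mathbb P\!\left(|\widetilde{\mathcal R}_t(w)| \ge r\right) \le \mathbb P\!\left(|\widetilde{\mathcal R}_t(w)| \ge r,\ \tau^R > t\right) + \mathbb P(\tau^R \le t),
\]
I would dispatch the second term using Corollary~\ref{cor:tau-is-polynomially-large}. A quick inspection of that corollary's proof shows the stronger bound $\mathbb P(\tau^R \le t) = O(tn^{-12})$ in fact holds uniformly for all $t \ge 0$ (the union bound is over a time mesh of size proportional to $t$, and the per-mesh-point probability is the same $O(n^{-13})$-type bound used there); this produces the error term in the statement.

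For the first term, the definition of $\tau^R$ in~\eqref{eq:tau-treelike} forces $|\widetilde{\mathcal C}_t(w)| \le R-1$ on $\{\tau^R > t\}$, and since $\widetilde{\mathcal R}_t(w)$ is contained in the vertex projection $\widetilde{\mathcal C}_t(w)$ we may decompose
\[
\mathbb P\!\left(|\widetilde{\mathcal R}_t(w)| \ge r,\ \tau^R > t\right) \le \sum_{k \ge r}\sum_{\ell = k}^{R-1} \mathbb P\!\left(|\widetilde{\mathcal R}_t(w)| \ge k,\ |\widetilde{\mathcal C}_t(w)| = \ell\right),
\]
by splitting on the exact value of $|\widetilde{\mathcal R}_t(w)|$ (and loosening to $\ge k$) and on the exact value of $|\widetilde{\mathcal C}_t(w)|$. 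Each summand is at most $\psi_k \psi_\ell$ by Proposition~\ref{prop:exp-tails-minus-regions}, and since $\sum_{\ell \ge 0} \psi_\ell = 1$ the double sum collapses to $\sum_{k \ge r} \psi_k$, yielding the tail bound.

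For the expectation, I would invoke the layer-cake identity $\mathbb E[|\widetilde{\mathcal R}_t(w)|] = \sum_{r \ge 1} \mathbb P(|\widetilde{\mathcal R}_t(w)| \ge r)$ together with the trivial deterministic bound $|\widetilde{\mathcal R}_t(w)| \le n$, so that only the first $n$ terms contribute. Inserting the tail bound, the main term contributes
\[
\sum_{r \ge 1} \sum_{k \ge r} \psi_k \;=\; \sum_{k \ge 1} k\,\psi_k \;=\; \sum_{k \ge 1} k^{-1} d^{-1000 - 100k},
\]
which is easily bounded by $2 d^{-1100}$ from the explicit definition of $\psi_k$ in~\eqref{eq:psi_k}, well below $d^{-100}$. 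The truncated error term contributes at most $n \cdot O(t n^{-12}) = O(n^{-9})$ for $t \le n^2$, which is also negligible against $d^{-100}$ for $n$ large, closing the estimate.

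There is no real obstacle here; the argument is essentially bookkeeping on top of Proposition~\ref{prop:exp-tails-minus-regions} and Corollary~\ref{cor:tau-is-polynomially-large}. The one mild subtlety is verifying that the $\mathbb P(\tau^R \le t)$ bound is valid for all $t \ge 0$ (not just $t = n^2$ as stated in Corollary~\ref{cor:tau-is-polynomially-large}), which is immediate from rereading its proof.
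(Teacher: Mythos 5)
Your proposal is correct and follows essentially the same route as the paper's proof: decompose the tail event according to whether the spacetime cluster is still small (at most $R$), apply Proposition~\ref{prop:exp-tails-minus-regions} term by term on the small piece, and absorb the large-cluster piece into the $O(tn^{-12})$ bound extracted from the proof of Corollary~\ref{cor:tau-is-polynomially-large} (which, as you correctly note, proves $\mathbb P(\tau^R \le t) = O(tn^{-12})$ uniformly, not just at $t=n^2$). The only cosmetic difference is that you phrase the split via the stopping time $\tau^R$ whereas the paper splits directly on $\{|\wC_t(w)| \ge R\}$, but since $\{|\wC_t(w)|\ge R\}\subset\{\tau^R\le t\}$ and both are then bounded by the same estimate, this is immaterial; the expectation computation via the layer-cake formula, truncation at $r\le n$, and the estimate $\sum_k k\psi_k = O(d^{-1100})$ likewise matches the paper's bookkeeping.
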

    \begin{proof}
        For the first bound, split 
        \begin{align*}
            \mathbb P(|\wR_t(w)|\in [r,R], |\wC_t(w)| \in [r,R]) + \mathbb P(|\wC_t(w)|\ge R) \le \sum_{k,\ell \ge r} \psi_{k}\psi_\ell + O(tn^{-12})\,,
        \end{align*} 
        where the first term followed by Proposition~\ref{prop:exp-tails-minus-regions} and the second from $\{|\wC_t(w)|\ge R\} \implies \{\tau^R \le t\}$ and Corollary~\ref{cor:tau-is-polynomially-large}. Since $(\psi_r)_{r\ge 1}$ is a probability mass function, this gives the claimed bound. 

        For the expected value, sum the tail bound provided over $r\ge 1$ to get a bound of $(1+d^{-10}) \psi_1 + O(n^{-10})$ which in turn is at most $d^{-100}$.  
    \end{proof}

\section{Fast quasi-equilibration from sufficiently biased initializations}\label{sec:legacy-analysis}
In this section, we use the bounds from the previous section on the sizes of minus information regions, to show that if we start with a sufficiently biased (but $\beta, d$ independent) initialization in the two-spin dynamics, the minus spacetime clusters of the initial minuses (called the legacy region) die out in $O(\log n)$ time. We then couple a Potts Glauber dynamics to the two-spin dynamics per Example~\ref{ex:Potts} to translate that to an $O(\log n)$ mixing time bound in the phase. 

In what follows, we will use $\gamma_0$ to denote a sufficiently small constant independent of $d$ ($\gamma_0 = 10^{-5}$ would suffice for instance, and already if we restricted to $d\ge 20$, then $\gamma_0 = \frac{1}{20}$ would suffice). Recall that $(Y_t)_{t\ge 0}$ denotes the continuous-time Potts Glauber dynamics and when we use $m$ for the magnetization of the Potts model it is defined in~\eqref{eq:Potts-phase-magnetization} (maximized by the all-$1$ state). 

\begin{theorem}\label{thm:from-d-independent-epsilon}
 If $G\sim \mathcal G_d(n)$ for $d\ge 7$, the following holds with probability $1-o(1)$.  There exists a universal constant $C_0$ such that for every $\beta> \beta_0 = \frac{C_0 \log (qd)}{d}$, for every $Y_0$ having $m(Y_0) \ge 1-\gamma_0$,  
 \begin{align*}
     \|\mathbb P_{Y_0}(Y_t \in \cdot ) - \pi_G(\cdot \mid \Omega^1)\|_\tv \le n^{ - 10}\,, \qquad \text{for every} \qquad C \log n \le t \le e^{ n/C}\,,
 \end{align*}
 for some $C(\beta,d,q)$. As $d\to\infty$, we could take $C = 1+o_d(1)$ in the lower bound on $t$.
\end{theorem}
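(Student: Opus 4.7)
The plan is to dominate the Potts Glauber by the two-spin chain of Example~\ref{ex:Potts}, monotonically couple it to the all-plus two-spin chain, and reduce the theorem to an $O(\log n)$-time extinction statement for the legacy minus cluster. Given $Y_0$ with $m(Y_0)\ge 1-\gamma_0$, define $x_0\in\{\pm 1\}^V$ by $x_0(v)=+1\iff Y_0(v)=1$, and run simultaneously the four processes $(Y_t^{Y_0}),(Y_t^{\mathbf 1}),(X_t^{x_0}),(X_t^+)$ under the common clock rings and uniforms of Definition~\ref{def:grand-coupling}, where the two-spin chains use $p_+$ from \eqref{eq:Potts-two-spin-chain}. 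A clock-by-clock induction invoking Lemma~\ref{lem:potts-domination} yields $\{v:Y_t(v)\ne 1\}\subseteq\{v:X_t(v)=-1\}$ for each coupled pair, and monotonicity of $p_+$ (property (A) of Definition~\ref{def:general-ising-process}) gives $X_t^{x_0}\le X_t^+$ at every spacetime point. Disagreements between the two two-spin chains are confined to the legacy cluster $\cL_{\le t}$ of $(X_t^{x_0})$, so whenever $\cL_t=\emptyset$ one has $Y_t^{Y_0}=Y_t^{\mathbf 1}$ pointwise. The theorem then follows by triangle inequality from (i) $\mathbb P(\cL_{C\log n}\ne\emptyset)\le n^{-20}$, and (ii) quasi-equilibration of $(Y_t^{\mathbf 1})$ to $\pi_G(\cdot\mid\Omega^1)$ in $O(\log n)$ time; but (ii) is itself a consequence of the same legacy analysis applied to an initialization sampled from $\pi_G(\cdot\mid\Omega^1)$, which with probability $1-o(1)$ has bias $\ge 1-\gamma_0$ by standard low-temperature Peierls estimates on the random $d$-regular graph (using expansion of $G$ and $\beta\gtrsim\log(qd)/d$).

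\textbf{Drift on $|\cL_t|$.} Let $\tau=\inf\{t\ge 0:|\cL_t|\ge n/1000\}$ and let $\tau^R$ be the hitting time from \eqref{eq:tau-treelike}. Since $|\cL_0|=|x_0^{-1}(-1)|\le\gamma_0 n\ll n/1000$, it suffices to prove the differential inequality
\[
 \frac{d}{dt}\mathbb E\bigl[|\cL_{t\wedge\tau\wedge\tau^R}|\bigr]\,\le\,-c\,\mathbb E\bigl[|\cL_{t\wedge\tau\wedge\tau^R}|\bigr],\qquad c=c(\beta,d,q)>0.
\]
A Gronwall plus Markov estimate then gives $\mathbb P(|\cL_{C\log n\wedge\tau\wedge\tau^R}|\ge 1)\le n^{-20}$ for $C$ large, and combining with $\mathbb P(\tau^R\le n^2)=O(n^{-10})$ (Corollary~\ref{cor:tau-is-polynomially-large}) together with a Freedman-type concentration on the compensated martingale for $|\cL_t|$ (whose jump sizes are controlled by the same exponential tails) bounds $\mathbb P(\tau\le C\log n)=O(n^{-10})$. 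The generator acting on $|\cL_t|$ decomposes over the three update types from Section~\ref{sec:minus-regions}: for~(1) an update at $v\in\cL_{t^-}$ that is a non-trifurcation point of its own minus region triggers a $-1\to+1$ flip at rate $\ge(1+e^{-2\beta d/7})^{-1}$ by Observation~\ref{obs:minus-four-neighbors-tripoint} combined with property (B); on $\{t<\tau^R\}$ each $G$-connected minus region of $\cL_{t^-}$ lies in some $B_R(\rho)$, so Lemma~\ref{lem:size:trifurcation} yields at least $|\cL_{t^-}|/2$ such points, producing a shrinkage of at least $c_1|\cL_{t^-}|$; for~(3) an update at $v\notin\cL_{t^-}$ adjacent to $\cL_{t^-}$ flipping $+1\to-1$ adds $1+|\bigsqcup_{w\sim v,\,w\notin\cL_{t^-}}\cR_{t^-}(w)|$ to $|\cL_t|$, so conditional on $\cL_{\le t}$ one needs tail bounds on the sizes of the minus regions absorbed. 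These bounds come from the monotone coupling $\widetilde X_t\le X_t^+$ which transfers Proposition~\ref{prop:exp-tails-minus-regions} and Corollary~\ref{cor:expectation-bound} to the all-plus chain, together with a conditional stochastic domination argument following the template of Lemma~\ref{lem:conditional-flip-rate} (now applied to the two-spin chain, with $\cL_{\le t}$ in place of $\vec C$), giving $\mathbb E|\cR_{t^-}(w)|=O(d^{-100})$ per adjacent $w$. The flip rate $1-p_+$ is $\le e^{-2\beta d/7}$ whenever $v$ has $\ge 4d/7$ plus neighbors, and the opposite event is rare: the contribution of external minus neighbors is tailed via Observation~\ref{obs:minus-four-neighbors-outside-part} and Proposition~\ref{prop:exp-tail-minus-regions-conditional}, while the $\cL_t$-internal contribution is controlled by the expansion of $G$ which bounds the number of vertices with $\ge k$ edges into a fixed set of size $\le n/1000$. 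Summing over the at most $d|\cL_{t^-}|$ boundary vertices produces a positive contribution of at most $Cd^{-10}|\cL_{t^-}|$, which for $\beta>C_0\log(qd)/d$ is dominated by (1).

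\textbf{Main obstacle.} The technical heart of the proof is the conditional stochastic domination used in (3): the legacy cluster $\cL_{\le t}$ is itself a random spacetime set, and conditioning on it biases the future evolution in a non-trivial way. It is precisely to handle this that Proposition~\ref{prop:exp-tail-minus-regions-conditional} was proved in its conditional form, and I would invoke it with the $\vec C$ in its statement playing the role of (a compatible extension of) $\cL_{\le t}$, after first reducing from the two-spin to the rigid dynamics via $\widetilde X_t\le X_t^+$. A second delicate point is the handling of exterior boundary vertices $v$ with more neighbors in $\cL_t$ than a typical vertex of $\mathcal G_d(n)$ would have: their individual contribution to the positive drift is not exponentially small in $\beta d$, but random regular graph expansion estimates show that the number of such atypical $v$ is only $O(|\cL_t|/d)$, letting their contribution be absorbed into the shrinkage of (1).
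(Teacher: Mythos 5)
Your reduction from the Potts chain to the two-spin chain and then to extinction of the legacy cluster $\cL_{\le t}$ is the correct skeleton and matches the paper (Lemma~\ref{lem:disagreement-subset-of-legacy} plus the concentration of $\pi^1$ on $\{m\ge 1-\gamma_0\}$ plus a triangle inequality). Your treatment of the growth term (3), invoking a conditional stochastic domination in the spirit of Lemma~\ref{lem:conditional-flip-rate} applied with $\cL_{\le t}$ in place of $\vec C$ and then Corollary~\ref{cor:expectation-bound} for the absorbed minus regions, is also essentially what the paper does (Corollary~\ref{cor:domination-by-i.i.d.-Psi}).

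However, your treatment of the shrinkage term (1) contains a genuine gap, and it is the central point of Section~\ref{sec:legacy-analysis}. You claim that ``on $\{t<\tau^R\}$ each $G$-connected minus region of $\cL_{t^-}$ lies in some $B_R(\rho)$,'' and then apply Lemma~\ref{lem:size:trifurcation} and Observation~\ref{obs:minus-four-neighbors-tripoint} to conclude that at least $|\cL_{t^-}|/2$ of its vertices are non-trifurcation points with $\le 3$ minus neighbors, hence flip to $+1$ at rate $\ge (1+e^{-2\beta d/7})^{-1}$. This fails on two counts. First, $\tau^R$ from \eqref{eq:tau-treelike} is defined for the rigid dynamics $(\widetilde X_t)$ initialized from all-plus; it says nothing about the size of the legacy cluster of $(X_t^{x_0})$. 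Second, and more fundamentally, $\cL_0$ already has $\gamma_0 n$ vertices, so $\cL_t$ is a macroscopic set whose $G$-connected components can have diameter $\gg R$, and which in fact contains the very cycles that the $1$-locally-treelike hypothesis rules out only at scale $R$. Both Lemma~\ref{lem:size:trifurcation} (which requires $\diam(A)<R$) and Observation~\ref{obs:minus-four-neighbors-tripoint} (which is a statement about the rigid dynamics, whose minus clusters are tree-like precisely because trifurcation flips are rejected) are therefore inapplicable: a vertex $v\in\cL_t$ in the non-rigid chain $X_t^{x_0}$ can perfectly well have all $d$ neighbors in $\cL_t$. The paper handles this by a different mechanism: global spectral/edge-expansion of $G\sim\mathcal G_d(n)$ (Lemmas~\ref{lem:expansion-edgenumber-facts:alternate} and~\ref{lem:expansion-edgenumber-facts-configuration-model}) shows that while $|\cL_t|\le 2\gamma_0 n$, at most a $1/3$-fraction of $\cL_t$ can have $>3d/7$ of their neighbors inside $\cL_t$, so the remaining $\ge 2|\cL_t|/3$ vertices have the required fast $-1\to+1$ flip rate. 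Trifurcation points play no role for the legacy cluster; they are only used to prove the exponential tails of Proposition~\ref{prop:exp-tails-minus-regions} for the rigid dynamics, which is then invoked for the growth term.

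A secondary issue is your proposed control of $\tau=\inf\{t:|\cL_t|\ge n/1000\}$ via ``Freedman-type concentration on the compensated martingale for $|\cL_t|$, whose jump sizes are controlled by the same exponential tails.'' The jumps of $|\cL_t|$ are not $O(1)$: an absorption event adds $1+|\bigcup_{w\sim v}\cR_{t^-}(w)|$, which can be of order $R\asymp\log n$ (or larger, with small probability), so a clean Freedman bound requires careful truncation. The paper avoids this entirely by instead tracking the magnetization of the two-spin chain, which has jumps of size exactly $2/n$, and showing (Lemma~\ref{lem:retain-large-magnetization}) that it stays above $1-4\gamma_0$ for exponentially long; since $m^X(X_t)\le 1-2|\cL_t|/n$, this directly bounds $\tau^{\cL}$. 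Your route might be salvageable with a truncation argument, but the paper's is cleaner and is what you should use.
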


Our proof will go by coupling the Potts dynamics to the two-spin dynamics of Example~\ref{ex:Potts} with initialization that is sufficiently biased towards plus. Extinction of the latter's minus spacetime clusters touching the initialization will imply mixing to the plurality-$1$ phase for the Potts Glauber dynamics. Therefore, throughout this section we use $X_t$ for the two-spin dynamics of Definition~\ref{def:general-ising-process} and $Y_t$ for Potts Glauber dynamics.

\subsection{The legacy region and disagreement sets}
Recall the grand monotone coupling of continuous-time two-spin dynamics chains from Definition~\ref{def:grand-coupling} using clock ring times $\mathcal T = (t_{v,j})_{v,j}$ and uniform random variables at each spacetime clock ring point $\mathcal U = (U_{v,j})_{v,j} = (U_{t_{v,j}})_{v,j}$. Notationally, the chain $X_t$ in what follows will be initialized from an $x_0$ with $m(x_0)\ge 1-\gamma_0$.  We define a union of minus spacetime regions of $X_t$ that contain all the negative information of the initialization. 

It will be helpful to be able to refer to general spacetime clusters that have died out by time $t$ in the set of spacetime clusters of Definition~\ref{def:minus-spacetime-clusters}. For that purpose, for a spacetime point $\mathbf{a} =(u,s)$ for $s\le t$, let $\cC_{\mathbf{a},\le t}$ be the (possibly empty) minus spacetime cluster containing $(u,s)$ in the set of minus spacetime clusters generated by the history of $(X_s)_s$ up to time $t$. 

\begin{definition}\label{def:region-evolution}
    For an initialization $x_0$, construct the minus spacetime clusters of $(X_{s})_{0\le s \le t}$ as in Definition~\ref{def:minus-spacetime-clusters} (per Remark~\ref{rem:spacetime-cluster-construction-general-init}). Define the \emph{legacy spacetime cluster} up to time $t$ as 
    \begin{align*}
        \mathcal L_{\le t} = \mathcal L_{\le t}((X_s)_{s\in [0,t]}) = \bigcup_{\mathbf{a} = (v,0)\,:\ x_0(v) = -1} \mathcal C_{\mathbf{a},\le t}((X_s)_{s\in [0,t]})\,.
    \end{align*}
    The \emph{legacy region} at time $t$ is the time-$t$ slice $\mathcal L_t = \mathcal L_{\le t} \cap (V\times \{t\})$. This is also the union of all minus regions of spacetime clusters touching $\{v\in V: x_0(v) = -1\}$. 
\end{definition}

The role played by the legacy region is that it can be used to confine the vertex set in a Potts Glauber dynamics chain that are not in the dominant state. That observation is the content of the following lemma. 

For any Potts initialization $y_0\in [q]^n$, let $x_0\equiv x_0(y_0)$ be the $\{\pm 1\}^n$-valued initialization which sets $x_0(v) =+1$ if $y_0(v) =1$ and $x_0(v) = -1$ if $y_0(v)\in \{2,\ldots,q\}$. Consider the two-spin Markov process $(X_t^{x_0})_{t\geq 0}$ in Definition~\ref{def:general-ising-process} initialized at $x_0$ and following the update rule \eqref{eq:update-rule} with $p_{+}:\{\pm 1\}^{d}\to [0,1]$ defined as in~\eqref{eq:Potts-two-spin-chain} with $\beta>0$ taken so that $\betap=2 \beta+\frac{C\log (q-1)}{d}$ as set in Lemma~\ref{lem:potts-domination}.

\begin{lemma}\label{lem:disagreement-subset-of-legacy}
    Fix any $q\ge 2$. For any $y_0\in [q]^n$ and $x_0 = x_0(y_0)$, there exists a coupling of the processes $(Y_t^{y_0})_{t\ge 0}, (Y_t^1)_{t\ge 0}$ and $ (X_t^{x_0})_{t\ge 0}$ such that for all $t\geq 0$,
    \begin{align*}
       \cD_{t}:=\big\{v\in V: Y_t^{y_0}(v) \ne Y_t^1(v)\big\} \subseteq \cL_{t}\,,
    \end{align*}
    i.e., the disagreement set in the Potts chains is a subset of the two-spin dynamics' legacy region. 
\end{lemma}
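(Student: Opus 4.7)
I plan to build the coupling jointly on four processes: the three chains in the statement together with an auxiliary two-spin chain $(X_t^+)_{t\ge 0}$ initialized from all-plus, all driven by the common clock rings $\cT$ and uniform variables $\cU=(U_{v,i})$ of Definition~\ref{def:grand-coupling}. The two-spin chains $(X_t^{x_0}, X_t^+)$ follow the standard grand monotone coupling, so $X_t^{x_0} \le X_t^+$ for all $t$. For each Potts chain $Y\in\{Y^{y_0}, Y^1\}$ and each update at $v$ with uniform $U$, I use the \emph{state-$1$-first} threshold rule: $Y_t(v) = 1$ iff $U \le P_1^Y$, where $P_j^Y$ denotes the conditional Potts probability of state $j$ at $v$ given $Y_{t^-}$, and the remaining states $2,\ldots,q$ are assigned in a fixed order on $(P_1^Y,1]$. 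The desired coupling is the projection onto $(Y^{y_0}, Y^1, X^{x_0})$.

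I will prove by induction on the clock rings the four invariants: $(\mathrm{I1})$ $X_t^{x_0}(v)=+1 \Rightarrow Y_t^{y_0}(v)=1$; $(\mathrm{I2})$ $X_t^+(v)=+1 \Rightarrow Y_t^1(v)=1$; $(\mathrm{I3})$ $\{v: X_t^{x_0}(v)\ne X_t^+(v)\}\subseteq \cL_t$; and $(\mathrm{I4})$ $\cD_t = \{v: Y_t^{y_0}(v)\ne Y_t^1(v)\}\subseteq \cL_t$. All four hold at $t=0$ from $x_0 = x_0(y_0)$, $X_0^+\equiv +1$, and $Y_0^1\equiv 1$, and the conclusion of the lemma is precisely $(\mathrm{I4})$.

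For the inductive step at an update $(v,t,U)$, the key input is the double chain of inequalities $p_+^{X^{x_0}}\le p_+^{X^+} \le P_1^{Y^1}$ and $p_+^{X^{x_0}} \le P_1^{Y^{y_0}}$ at $v$. The outer bounds follow from $(\mathrm{I1})$, $(\mathrm{I2})$ and Lemma~\ref{lem:potts-domination}: an invariant of the form $X(v)=+1\Rightarrow Y(v)=1$ makes the state-$1$ count in $Y$ on $N(v)$ dominate the plus count in $X$ on $N(v)$, so Lemma~\ref{lem:potts-domination} applies when that plus count is at least $4d/7$, while otherwise $p_+ = 0$ by construction of the two-spin chain in Example~\ref{ex:Potts}. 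The middle bound is monotonicity of $p_+$ with $X^{x_0}_{t^-}\le X^+_{t^-}$. I then split on the position of $v$ relative to $\cL_{t^-}$. \emph{Case A}: if $\{v\}\cup N(v)$ is disjoint from $\cL_{t^-}$, then $(\mathrm{I3})$ and $(\mathrm{I4})$ force the two-spin chains to agree on $\{v\}\cup N(v)$ and the two Potts chains to agree there too, so under the common $U$ all four updates produce identical outputs. \emph{Case B}: if $v\in\cL_{t^-}$ so that $X^{x_0}_{t^-}(v)=-1$, then on $\{U\le p_+^{X^{x_0}}\}$ the double chain forces $X^{x_0}_t(v)=X^+_t(v)=+1$ and $Y^{y_0}_t(v)=Y^1_t(v)=1$ simultaneously, removing $v$ from $\cL_t$ and from all disagreement sets at once, while on the complementary event $v$ stays $-1$ in $X^{x_0}$ and thus in $\cL_t$. \emph{Case C}: if $v\notin\cL_{t^-}$ but $N(v)\cap\cL_{t^-}\ne\emptyset$, then distance-$2$ separation of minus regions from Observation~\ref{obs:properties-of-regions} forces $X^{x_0}_{t^-}(v)=+1$, and the same subcases apply, with $\{U>p_+^{X^{x_0}}\}$ now merging $v$ into $\cL_t$ through its legacy neighbor.

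The main obstacle is that in Cases B and C the two Potts chains see different neighborhoods at $v$ on the legacy, so they are not a priori coupled to agree at $v$. The resolution is to funnel all required agreement through state~$1$: the state-$1$-first threshold rule, combined with the double domination $p_+^{X^{x_0}}\le\min(P_1^{Y^1}, P_1^{Y^{y_0}})$ secured by running $X^+$ alongside, forces both Potts chains into state $1$ exactly on the event $\{U\le p_+^{X^{x_0}}\}$ that removes $v$ from $\cL_t$; on the complementary event $v$ stays in $\cL_t$, absorbing any Potts disagreement there. This is the critical use of the auxiliary chain $X^+$ and explains why a direct monotone coupling of only the three processes in the lemma statement does not suffice when $q\ge 3$.
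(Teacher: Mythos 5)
Your proposal is correct and follows essentially the same route as the paper: share the clock rings and uniforms across all chains, use the state-$1$-first threshold partitioning for the Potts updates, and run an induction over clock rings that preserves $\cD_t\subseteq\cL_t$ together with an auxiliary invariant that the two-spin chain being $+1$ at $v$ forces both Potts chains into state $1$ there (this is exactly what makes the $U < p_+^{X^{x_0}}$ event ``funnel'' both Potts chains into agreement at state $1$). The one claim you should retract is that the auxiliary all-plus chain $X^+$ is ``critical'' and that a direct coupling of only the three stated processes ``does not suffice when $q\ge 3$.'' The paper's proof \emph{is} a direct coupling of exactly those three processes: it maintains the invariant $\cY_t\subseteq\cS_t$ (where $\cS_t$ is the minus set of $X^{x_0}_t$), i.e.\ $X_t^{x_0}(v)=+1\Rightarrow Y_t^{y_0}(v)=Y_t^1(v)=1$, and that invariant alone yields the needed bound $p_+^{X^{x_0}}\le p_1^{y_0}\wedge p_1^{1}$ via Lemma~\ref{lem:potts-domination}, with no reference to $X^+$. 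Your I2 ($X_t^+(v)=+1\Rightarrow Y_t^1(v)=1$) is a strengthening of the half of $\cY_t\subseteq\cS_t$ concerning $Y^1$; it happens to hold, and your argument goes through, but $X^+$ and invariants I2--I3 are scaffolding that can be removed without loss.
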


\begin{proof}
    We use the same Poisson clock ring sequence $\cT=\bigcup_{v\in V} \bigcup_{j\ge 1} \{t_{v,j}\}$ for all three chains. When a clock rings at $t=t_{v,j}$, we use the same uniform random variable $U_{v,j}\sim \textnormal{Unif}([0,1])$ to update the vertex $v$ at time $t$ for the three chains as follows. Given configurations $(Y_{t^-}^{y_0}, Y_{t^-}^{1}, X_{t^-}^{x_0})$, let the probability vectors $(p_k^{y_0})_{k\in [q]}$ and $(p_k^{1})_{k\in[q]}$ denote 
    \begin{align*}
        p_k^{y_0} &= \pi(\sigma(v) = k \mid (\sigma(w))_{w\sim v} = (Y_{t ^-}^{y_0}(w))_{w\sim v})\,, \\   p_k^{1} & = \pi(\sigma(v) = k \mid (\sigma(w))_{w\sim v} = (Y_{t ^-}^{1}(w))_{w\sim v})\,.
    \end{align*}
    and let $p_{+}\equiv p_{+}\big((X_{t^-}^{x_0}(w))_{w\sim v}\big)$. Partition the interval $[0,1]$ as $[0,1]=\sqcup_{k\in [q]}I_k$ where $I_1=[0,p_1^{y_0})$ and $I_k$ has length $p_k^{y_0}$ for $k\in [q]$. We then set $Y_{t}^{y_0}(v)=k$ if $U_{v,j}\in I_{k}$. Similarly, let $[0,1]=\sqcup_{k\in [q]}J_k$ where $J_1=[0,p_1^{1})$ and $J_k$ has length $p_k^{1}$. We set $Y_t^{1}(v)=k$ if $U_{v,j}\in J_k$. Finally, $X_{t}^{x_0}(v)=+1$ if $U_{v,j}\in [0,p_{+})$, and otherwise, set $X_{t}^{x_0}(v)=-1$.

    An important property of this coupling is that if $p_{+}\leq p_{1}^{y_0}\wedge p_1^{1}$, then $X_{t}^{x_0}(v)=+1$ implies $Y_{t}^{y_0}(v)=Y_{t}^{1}(v)=1$. With this property, we argue that at any time $t\geq 0$, we have the inclusions
    \begin{align}\label{eq:inclusion}
    \cD_t\subseteq \cL_t\quad\textnormal{and}\quad \cY_t \subseteq \cS_t:=\{v\in V:X_{t}(v)=-1\}\,,
    \end{align}
    where $\cY_{t}$ is the set of $v\in V$ such that either $Y_{t}^{y_0}(v)\neq 1$ or $Y_t^{1}(v)\neq 1$. Note that if these inclusions hold for $\cT=(t_i)_{i\geq 1}$, where $t_0:=0\leq t_1<t_2<\ldots$, then they hold for all $s\in [t_i, t_{i+1}^-)$ so we proceed by induction on $i\geq 0$. Evidently, these inclusions hold for $t_0=0$ by our definition of $x_0\equiv x_0(y_0)$.
    
    Suppose \eqref{eq:inclusion} holds for all $(t_{i})_{i\le k}$, and the next clock ring time $t= t_{k+1}$ is on a vertex $v$. First, observe that if $\sum_{w\sim v}\one \{X_{t^-}(w)=+1\}<\frac{4d}{7}$, then $X_t(v)=-1$ holds by our choice of $p_{+}(\cdot)$. Thus, $\cL_{t}\supseteq \cL_{t^-}\cup \{v\}$ and $\cS_t=\cS_{t^-}\cup \{v\}$. Thus, no matter if $v$ is added to the sets $\cD_{t^-}$ or $\cY_{t^-}$ or not, the inclusions \eqref{eq:inclusion} are retained. Therefore, it suffices to consider the case $\sum_{w\sim v}\one\{X_{t^-}(w)=+1\}\geq \frac{4d}{7}$. In this case, we have by the second inclusion in our inductive hypothesis that $\sum_{w\sim v}\one\{Y_{t^-}^{y_0}(w)=1\}\geq \frac{4d}{7}$ and likewise for $Y_{t^-}^1$. By Lemma~\ref{lem:potts-domination}, this implies that $p_{+}\leq p_1^{y_0}\wedge p_1^1$, so by our construction of the coupling, $X_{t}^{x_0}(v)=+1$ implies $Y_t^{y_0}(v)=Y_t^1(v)=1$. In particular, $\cY_t\subseteq \cS_t$ is retained. 
    
    To prove that $\cD_t\subseteq \cL_t$ is retained, observe that the only way this is violated at time $t$ assuming $\cD_{t^-}\subseteq \cL_{t^-}$ is when $v\in \cD_{t}\setminus \cD_{t^-}$ whereas $v$ is removed from $\cL_{t^-}$ at time $t$. 
    This cannot happen because if $v$ was removed from $\cL_{t^-}$, then necessarily $X_{t}^{x_0}(v)=+1$, in which case $Y_t^{y_0}(v)=Y_t^1(v)=1$, so there $Y_t^{y_0}$ and $Y_t^1$ must agree on $v$ at time $t$. This concludes the proof of the inclusion~\eqref{eq:inclusion}.
\end{proof}

\subsection{Properties of the legacy region}
Lemma~\ref{lem:disagreement-subset-of-legacy} reduces our main task to establishing that the legacy region of the two-spin dynamics initialized with $x_0$ having at least $(1-\gamma_0)n$ many pluses becomes empty after time $O(\log n)$, as that implies the same of the Potts Glauber disagreement set $\cD_t$.  

\begin{definition}\label{def:admissible-legacy}We say a spacetime subset $L \subset V\times [0,t]$ is a \emph{$t$-admissible legacy cluster} if it is made of the union of $(u_i,t)$-admissible minus spacetime clusters with each constituent cluster intersecting the initial minus set $\cD_0^{x_0} = \{v: x_0(v) = -1\}\times \{0\}$, and every point in $\cD_{0}^{x_0}$ belonging to some cluster in $L$. 
\end{definition}

By Observation~\ref{obs:spacetime-cluster-internally-measurable} and the fact that this is a union of spacetime clusters, the legacy region $\cL_{\le t}$, and in particular events of the form $\cE_{t,L}$ where we are using 
$$\cE_{t,L} = \{\cL_{\le t} = L\}\,,$$ for $t$-admissible legacy cluster $L \subset V\times [0,t]$, are measurable with respect to the filtration $\cF_{\cL,t}$ which is the sigma-algebra generated by $(\cF_{\mathbf{a},\le t})_{\mathbf{a}\in \cD_0^{x_0}}$.

The important property we rely on is that minus regions of $X_t$ in the complement of the legacy region (through which the legacy region grows) are stochastically dominated by those of the rigid dynamics initialized from all-$+1$ which we controlled in Section~\ref{sec:exp-tails-minus-spacetime-regions}. 
We begin with a variant of the flip rate monotonicity of Lemma~\ref{lem:conditional-flip-rate} that compares the standard two-spin dynamics' flip rates on the complement of the legacy cluster, conditional on the legacy equaling some $L$, with an unconditional realization of the two-spin dynamics initialized from all-$+1$. 

Given an admissible legacy spacetime cluster $L\subset V\times [0,t]$, define the conditional flip rate  
\begin{align}\label{eq:legacy-conditioned-flip-rate}
    c_{v,s} := c_{v,s}((X_r)_{r<s},L) = \lim_{\epsilon \downarrow 0} \frac{1}{\epsilon} \mathbb P( X_{s+\epsilon}(v) \ne X_{s^-}(v) \mid (X_r)_{r<s}, \cE_{t,L})\,.
\end{align}

\begin{lemma}\label{lem:conditional-flip-rates-legacy}
    Fix an initialization $x_0$ for $(X_t)_{t\ge 0}$. Let $L \subset V\times [0,t]$ be a $t$-admissible legacy spacetime cluster.  For any $s<t$, and any $v$ such that $(v,s)\notin L$, if $X_{s^-}(v) = +1$, 
    \begin{align*}
        c_{v,s} \le 1-p_+((X_{s^-}(w))_{w\sim v})\,,
    \end{align*}
    and if $X_{s^-}(v) = -1$, then 
        \begin{align*}
        c_{v,s} \ge p_+((X_{s^-}(w))_{w\sim v})\,.
    \end{align*}
\end{lemma}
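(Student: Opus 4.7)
The plan is to mirror the proof of Lemma~\ref{lem:conditional-flip-rate}, replacing the conditioning event $\wEE$ by $\cE_{t,L}$ and the rigid dynamics by the two-spin dynamics $(X_s)_{s\ge 0}$. First, after conditioning further on an admissible clock ring sequence $\cT$ compatible with $\cE_{t,L}$, I will apply Bayes' formula to write
\[
c_{v,s} \;=\; \hat c_{v,s}\cdot \frac{\mathbb P\big(\cE_{t,L} \mid \mathcal F_{s^-}, \cT, X_s(v) \ne X_{s^-}(v)\big)}{\mathbb P\big(\cE_{t,L} \mid \mathcal F_{s^-}, \cT\big)},
\]
where $\hat c_{v,s}$ is the unconditional flip rate, namely $1 - p_+((X_{s^-}(w))_{w\sim v})$ if $X_{s^-}(v) = +1$ and $p_+((X_{s^-}(w))_{w\sim v})$ if $X_{s^-}(v) = -1$. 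Since the denominator is a convex combination of the numerator and $\mathbb P(\cE_{t,L} \mid \ldots, X_s(v) = X_{s^-}(v))$, both inequalities in the lemma reduce to the single monotonicity statement
\[
\mathbb P\big(\cE_{t,L} \mid \mathcal F_{s^-}, \cT, X_s(v) = -1\big) \;\le\; \mathbb P\big(\cE_{t,L} \mid \mathcal F_{s^-}, \cT, X_s(v) = +1\big).
\]

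To establish this, I will couple two two-spin chains $Z^+$ and $Z^-$ via the grand coupling of Definition~\ref{def:grand-coupling}, sharing all clock rings and uniform variables, agreeing on $V \times [0,s) \cup (V \setminus \{v\}) \times \{s\}$, with $Z^+_s(v) = +1$ and $Z^-_s(v) = -1$. Monotonicity of the update rule (assumption (A) of Definition~\ref{def:general-ising-process}) gives $Z^+_r \ge Z^-_r$ for all $r\ge s$. The goal is to show that if the trajectory of $Z^-$ on $[0,t]$ is compatible with $\cE_{t,L}$, so is the trajectory of $Z^+$; this will be proved inductively over the clock rings $T_1 < T_2 < \cdots$ that follow $s$, maintaining the invariant that $Z^+ = Z^-$ on $L \cup \partial_o L$ while $Z^+ \ge Z^-$ on its complement.

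The structural fact driving the induction is that every spatial neighbor of a point in $L$ lies in $L \cup \partial_o L$ (directly from the definition of outer spatial boundary). Consequently, at a clock ring $(w, T_k) \in L$, both chains see identical neighbor configurations and the resulting updates are literally identical; at a clock ring $(w, T_k) \in \partial_o L$, compatibility of $Z^-$ with $\cE_{t,L}$ forces $Z^-(w, T_k) = +1$, and then $Z^+ \ge Z^-$ forces $Z^+(w, T_k) = +1$ as well; at a clock ring outside $L \cup \partial_o L$, the monotonicity $Z^+ \ge Z^-$ is preserved by the monotonicity of $p_+$ in the standard way. The ``no merging'' aspect of $\cE_{t,L}$---that no non-legacy minus cluster of $Z^+$ touches $L$---is inherited because every minus of $Z^+$ outside $L \cup \partial_o L$ is also a minus of $Z^-$, so any spacetime path of $Z^+$-minuses reaching $L$ would already exist in $Z^-$.

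The main obstacle will be handling clock rings on the lower and upper temporal boundaries of $L$, where a vertex enters or leaves $L$ and the update must produce the specific transition dictated by $\cE_{t,L}$. At the lower temporal boundary the entering vertex lies in $\partial_o L$ just before its clock ring, so its spatial neighbors at that instant are in $L \cup \partial_o L$ and the inductive hypothesis forces both chains to see identical neighborhoods and produce the same update; the upper temporal boundary is handled symmetrically. Finally, there is the degenerate case $(v,s) \in \partial_o L$: here $Z^-_s(v) = -1$ is immediately incompatible with $\cE_{t,L}$ and the monotonicity inequality holds trivially. Together these observations close the induction and complete the reduction.
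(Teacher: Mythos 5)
Your proof is correct and follows essentially the same approach as the paper, which disposes of this lemma in a single sentence by pointing to the proof of Lemma~\ref{lem:conditional-flip-rate} and noting that the standard two-spin dynamics has the same local monotone update structure as the rigid dynamics, so the same Bayes' formula reduction and inductive coupling argument apply with $\cE_{t,L}$ in place of $\wEE$. You correctly reproduce that reduction (Bayes' formula, the monotonicity inequality for $\mathbb P(\cE_{t,L}\mid\cdots, X_s(v)=\pm 1)$, the grand coupling of $Z^\pm$, and the case analysis over clock rings inside $L$, on $\partial_o L$, and farther away), handle the degenerate case $(v,s)\in\partial_o L$, and omit the trifurcation-measurability discussion that is specific to the rigid dynamics — exactly the modification the paper intends. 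Two small cosmetic points: in the lower temporal boundary case, the fact that the entering vertex's neighbors at time $T_k^-$ lie in $L\cup\partial_o L$ follows from $(w,T_k)\in L$, not from $(w,T_k^-)\in\partial_o L$ as your phrasing suggests; and the ``no merging'' paragraph is redundant, since $\cE_{t,L}$ is already fully characterized by $L$ being all-minus and $\partial_o L$ being all-plus, so no separate argument about non-legacy clusters reaching $L$ is needed.
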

\begin{proof}
    The proof is the same as the proof of Lemma~\ref{lem:conditional-flip-rate}. The only distinction is that it is not the rigid dynamics we are considering but the standard two-spin dynamics. But this still has local monotone update rules, and therefore the inductive proof that $\cE_{t,L}$ only becomes more probable given the flip at $(v,s)$ to $+1$ happens (or the flip to $-1$ doesn't happen) goes through identically. 
\end{proof}
 
\begin{lemma}\label{lem:spacetime-cluster-increasing-on-exterior}
    Consider any initialization $x_0$ for $(X_t)_{t\ge 0}$ fix any $t$-admissible  $L$. The conditional law of $X(V\times [0,t] \setminus L)$ given the event $\cE_{t,L}$ stochastically dominates the law of $X^+(V\times [0,t] \setminus L)$. 
\end{lemma}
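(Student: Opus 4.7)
The plan is to construct a coupling of $X \mid \cE_{t,L}$ and $X^+$ on a common probability space under which
\[
X_s(v) \ge X_s^+(v) \qquad \text{for every $(v,s) \in V\times [0,t]\setminus L$,}
\]
almost surely; this pointwise monotone coupling yields the stated stochastic domination via Strassen's theorem. The construction proceeds by induction over the clock rings, with two main ingredients: the monotonicity of $p_+$ from Definition~\ref{def:general-ising-process}(A), and the conditional flip rate bounds of Lemma~\ref{lem:conditional-flip-rates-legacy}.

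For the base case, at $s=0$ the admissibility of $L$ (Definition~\ref{def:admissible-legacy}) requires $\cD_0^{x_0}\subseteq L$, so $(v,0)\notin L$ forces $x_0(v)=+1=X_0^+(v)$. For the inductive step, consider a clock ring at $(v,s)$ with $(v,s)\notin L$, assuming the invariant holds on $V\times [0,s)\setminus L$. If $(v,s)\in\partial_o L$ (outer spatial boundary), the conditioning forces $X_s(v)=+1\ge X_s^+(v)$. Otherwise $(v,s)\notin L\cup \partial_o L$, and since almost surely no neighbor of $v$ has a clock ring simultaneously with $v$'s, the membership $(w,s^-)\in L$ is equivalent to $(w,s)\in L$ for every $w\sim v$; in particular no neighbor of $v$ lies in $L$ at time $s^-$ (else $(v,s)\in \partial_o L$). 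The inductive hypothesis therefore yields $X_{s^-}(w)\ge X_{s^-}^+(w)$ for every $w\sim v$, so monotonicity of $p_+$ gives
\[
p_+((X_{s^-}(w))_{w\sim v}) \ \ge\ p_+((X_{s^-}^+(w))_{w\sim v}).
\]
By Lemma~\ref{lem:conditional-flip-rates-legacy}, the conditional probability that $X_s(v)=+1$ given $\cF_{s^-}$ and $\cE_{t,L}$ is at least the left-hand side, and hence at least the unconditional probability that $X_s^+(v)=+1$. A standard threshold coupling using a common $\text{Unif}[0,1]$ random variable at this clock ring then yields $X_s^+(v)=+1 \Rightarrow X_s(v)=+1$, preserving the invariant.

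The main technical subtlety is that conditioning on $\cE_{t,L}$ correlates the Poisson clock rings and uniforms with the realized legacy, so the grand coupling of Definition~\ref{def:grand-coupling} cannot be applied naively. I would address this by partitioning the clock sequence $\cT$ into the forced rings $\cT_L$ (those at spacetime points of $L$ and its outer spatial and temporal boundaries, per Observation~\ref{obs:spacetime-cluster-internally-measurable}) and the free rings $\cT\setminus \cT_L$. By memorylessness of the Poisson process, conditional on $\cE_{t,L}$ the free rings still form a Poisson process whose uniform marks remain i.i.d.\ $\text{Unif}[0,1]$. The coupling then shares these free rings and marks between $X$ and $X^+$, while $X^+$ is driven by independent auxiliary randomness at the times in $\cT_L$. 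Under this setup, at each free ring the threshold argument preserves the invariant; at a ring with $(v,s)\in \partial_o L$ the forcing $X_s(v)=+1$ preserves it automatically; and at a ring with $(v,s)\in L$ there is no invariant to verify.
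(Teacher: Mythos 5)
Your high-level plan (pointwise monotone coupling by induction over jump times, base case from $\cD_0^{x_0}\subseteq L$, flip-rate comparison via Lemma~\ref{lem:conditional-flip-rates-legacy} combined with monotonicity of $p_+$) matches the structure of the paper's argument. However, the key step you use to implement the coupling contains a genuine gap.

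You claim that, after setting aside the forced rings $\cT_L$, ``conditional on $\cE_{t,L}$ the free rings still form a Poisson process whose uniform marks remain i.i.d.\ $\mathrm{Unif}[0,1]$,'' and you then share this free randomness between the conditioned chain and $X^+$. This is not justified. By Observation~\ref{obs:spacetime-cluster-internally-measurable}, the event $\cE_{t,L}$ is measurable not only through entry/exit rings and their uniforms but also through the boundary-state indicators $(I_{(w,r)})_{(w,r)\in\partial_o L}$ that $X_r(w)=+1$ along $\partial_o L$. Those indicators are functionals of the \emph{exterior} trajectory: whether a boundary vertex $w$ stays $+1$ depends on the updates at $w$ and on the evolving states of its neighbors outside $L$, all of which are driven by the supposedly ``free'' rings and uniforms. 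Concretely, a $+1\to -1$ flip at a vertex at distance two from $L$ makes flips to $-1$ at the adjacent boundary vertices more likely, which is anticorrelated with $\cE_{t,L}$; so given $\cE_{t,L}$ the free uniforms are biased and not conditionally i.i.d.\ uniform. Once that claim fails, the shared-uniform threshold coupling at the free rings is no longer a valid coupling of the two laws, and the induction does not close. This is precisely why the paper does not try to recycle the original Poisson/uniform randomness: instead it builds a \emph{fresh} coupling in which each vertex is driven by an auxiliary, adapted-intensity Poisson process with rate $\lambda_v(\cdot)=\max\{c_{v,\cdot},\,c^+_{v,\cdot}\}$, the chain attaining the max is flipped with probability one, and the other chain is flipped with a probability chosen to match the correct conditional rate. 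The inequalities of Lemma~\ref{lem:conditional-flip-rates-legacy} then say exactly that the max is attained by $c_{v,\cdot}$ when the lower chain is $-1$ and by $c^+_{v,\cdot}$ when it is $+1$, which is what preserves the domination. Restructuring your coupling in this spirit — generating both processes from their (conditional) jump rates rather than from shared raw clock data — would repair the argument.
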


\begin{proof}
 We construct a coupling of $(X_s)_{s\le t}$ (conditional on $\cE_{t,L}$) with the unconditional $(X_s^+)_{s\le t}$ such that for all $s\le t$ and $w: (w,s)\notin L$, we have $X_s(w)\ge X_s^+(w)$. 

 To show the domination, note first that it holds at time $t=0$, since all vertices that are minus at time $0$ in $x_0$ are necessarily in $L$. 
 Now using the above notation, let $c_{v,s}(X_{s^-}, L)$ be the flip rate for $(X_s)_{s\le t}$ conditioned on $\mathcal E_{t,L}$, and let $c^+_{v,s}= c^+_{v,s}(X^+_{s^-})$ be the flip rate for the unconditional two-spin dynamics initialized from all-plus. 
 
 The coupling of the two processes goes as follows. At time $s$, to each vertex $v$ with $(v,s) \notin L$, associate an independent Poisson process of intensity 
 \begin{align*}
     \lambda_v(r) = \max\{c_{v,s+r}, c^+_{v,s+r}\} \qquad r\ge 0\,.
 \end{align*}
 Notice that by Lemma~\ref{lem:conditional-flip-rates-legacy}, if $X_{s^-}(w)\ge X_{s^-}^+(w)$ for $w\sim v$ and $X_{s^-}(v) = -1$ and neither process has had any flips in $[s,s+r)$, then the maximum in $\lambda_v(r)$ is attained by $c_{v,s+r}$. On the other hand,  if $X_{s^-}(v) = +1$, then the maximum is attained by $c^+_{v,s+r}$. 
 
 At the first time after $s$ that some vertex $v$'s Poisson process has a clock ring, say at spacetime point $(v,s+r)$, 
 \begin{itemize}
     \item If $\lambda_v(r) = c_{v,s+r}$ then flip $X_{s+r^-}(v)$ with probability $1$ and flip $X_{s+r^-}^+(v)$ with probability $1-\frac{1}{c_{v,s+r}} (e^{c_{v,s+r} - c^+_{v,s+r}}-1)$. 
     \item If $\lambda_v(r) = c^+_{v,s+r}$ then flip $X^+_{s+r^-}(v)$ with probability $1$ and flip $X_{s+r^-}(v)$ with probability $1-\frac{1}{c^+_{v,s+r}} (e^{c^+_{v,s+r} - c_{v,s+r}}-1)$.
 \end{itemize}
 Then at time $s+r$, restart Poisson processes at every vertex with new intensities $\lambda_v$ given by the new conditional flip rates. It is straightforward to check that this is a valid coupling of the two processes.  
    To argue that this coupling maintains monotonicity in the spacetime complement of $L$, suppose that there is a flip at time $s$ at vertex $v$ with $(v,s)\notin L$, and suppose inductively that we have $X_{s^-}(w)\ge X^+_{s^-}(w)$ for all $(w,s^-)\notin L$. If $(v,s) \in \partial_o L$, it must have $X_{s}(v) = +1$ and therefore also $X_s(v)\ge X_s^+(w)$ and no other vertex changes until the next flip time. If $(v,s)\notin \partial_o L$ then all of $v$'s neighbors in $X_{s^-}$ are more plus than they are in $X_{s^-}^+$ and by construction of the coupling the domination will be retained. 
 \end{proof}

Combining Lemma~\ref{lem:spacetime-cluster-increasing-on-exterior} with the domination of the rigid dynamics $\wX_t$ by $X_t^+$ everywhere per~\eqref{eq:rigid-below-normal-dynamics}, we get the following. 

\begin{corollary}\label{cor:domination-by-i.i.d.-Psi}
             Consider any initialization $x_0$ for the two-spin dynamics $(X_t)_{t\ge 0}$. Let $\wX_t$ be the rigid dynamics initialized from all-$+1$. For any $t$ and any $t$-admissible spacetime legacy cluster $L$, we have that $$\text{Law}(X(V\times [0,t]\setminus L) \mid \cE_{t,L}) \succeq \text{Law}(\wX(V\times [0,t]\setminus L))\,.$$ In particular, for any $w$ with $(w,t)\notin L$, we have 
             $$\text{Law}(\cR_t(w) \mid \cE_{t,L}) \preceq \text{Law}(\wR_{t}(w))\,,$$
             where the partial order on the minus regions is the natural one given by inclusion. 
\end{corollary}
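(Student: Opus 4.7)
My plan is to chain together Lemma~\ref{lem:spacetime-cluster-increasing-on-exterior} with the pointwise domination $\widetilde{X}_s \le X_s^+$ recorded in~\eqref{eq:rigid-below-normal-dynamics}, and then to transfer spin-level domination to minus spacetime clusters, exploiting the fact that distinct minus spacetime clusters are disjoint as subsets of $G\times [0,t]$.

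First I would invoke Lemma~\ref{lem:spacetime-cluster-increasing-on-exterior} to produce a coupling $\Pi_1$ of $(X_s)_{s\le t}\mid \mathcal{E}_{t,L}$ with the all-plus-initialized two-spin chain $(X_s^+)_{s\le t}$ under which $X_s(v) \ge X_s^+(v)$ for every $(v,s) \in V\times [0,t]\setminus L$. Separately, the grand coupling of Definition~\ref{def:grand-coupling} realizes $X_s^+$ and the rigid dynamics $\widetilde{X}_s$ on one probability space driven by the same $(\mathcal T, \mathcal U)$; by~\eqref{eq:rigid-below-normal-dynamics} this coupling $\Pi_2$ satisfies $X_s^+(v) \ge \widetilde{X}_s(v)$ pointwise on all of $V\times [0,\infty)$. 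Gluing $\Pi_1$ and $\Pi_2$ along the shared marginal $X^+$ (a standard Strassen-type construction: given $X^+$, independently draw $X$ from its $\Pi_1$-conditional and $\widetilde{X}$ from its $\Pi_2$-conditional) yields a triple coupling of $(X\mid \mathcal{E}_{t,L},\, X^+,\, \widetilde{X})$ under which $X_s(v) \ge \widetilde{X}_s(v)$ for every $(v,s) \in V\times [0,t]\setminus L$. This establishes the first stochastic domination claim.

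For the second claim, I would fix $w$ with $(w,t) \notin L$ and work inside the joint coupling above. The key observation is that the minus spacetime cluster $\mathcal{C}_{w,\le t}$ of $X$ is disjoint from $L$: distinct minus spacetime clusters of $X$ are, by maximality, mutually disjoint, and the hypothesis $(w,t) \notin L$ forces $\mathcal{C}_{w,\le t}$ to be a cluster distinct from every cluster comprising $L$. Thus $\mathcal{C}_{w,\le t} \subseteq V\times [0,t]\setminus L$, and on this region the spin-level domination $X \ge \widetilde{X}$ says that the set of minus spacetime points of $X$ is contained in that of $\widetilde{X}$. Hence $\mathcal{C}_{w,\le t}$ is a $(G\times [0,t])$-connected set of minus spacetime points of $\widetilde{X}$ containing $(w,t)$, and by maximality of $\widetilde{\mathcal{C}}_{w,\le t}$ it satisfies $\mathcal{C}_{w,\le t} \subseteq \widetilde{\mathcal{C}}_{w,\le t}$. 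Intersecting with $V\times\{t\}$ gives $\mathcal{R}_t(w) \subseteq \widetilde{\mathcal{R}}_t(w)$ almost surely under the coupling, which is the required stochastic domination in the inclusion order.

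Rather than a substantive obstacle, the only point requiring care is that the domination furnished by Lemma~\ref{lem:spacetime-cluster-increasing-on-exterior} lives only on the complement of $L$, so one must verify that the cluster $\mathcal{C}_{w,\le t}$ does not stray into $L$ before comparing it to $\widetilde{\mathcal{C}}_{w,\le t}$; this follows immediately from the disjointness of distinct minus spacetime clusters together with $(w,t) \notin L$. All the real work is packaged into Lemmas~\ref{lem:conditional-flip-rates-legacy}--\ref{lem:spacetime-cluster-increasing-on-exterior} and equation~\eqref{eq:rigid-below-normal-dynamics}, so the corollary is essentially a transitivity-plus-bookkeeping step.
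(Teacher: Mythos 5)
Your proposal is correct and takes essentially the same approach as the paper: the paper derives the corollary by "combining Lemma~\ref{lem:spacetime-cluster-increasing-on-exterior} with the domination of the rigid dynamics $\wX_t$ by $X_t^+$ everywhere per~\eqref{eq:rigid-below-normal-dynamics}," treating the details as immediate, while you simply spell out the Strassen-type gluing of the two couplings and the observation that $\cC_{w,\le t}$ is disjoint from $L$ (so the domination on $V\times[0,t]\setminus L$ suffices to compare spacetime clusters). Both the coupling chain and the cluster-containment step $\cC_{w,\le t}\subseteq\wC_{w,\le t}\Rightarrow\cR_t(w)\subseteq\wR_t(w)$ are correct.
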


Corollary~\ref{cor:domination-by-i.i.d.-Psi} allows us to control the growth of the legacy region $\cL_t$ (via merging with other spacetime regions) by bounding the neighboring spacetime regions using Corollary~\ref{cor:expectation-bound}. 

\subsection{Spacetime analysis of the legacy region's evolution}

In this subsection, we establish the following, which bounds the extinction time of the legacy.

\begin{proposition}\label{prop:extinction-time-of-legacy}
    Fix any initialization $x_0$ with at least $(1-\gamma_0) n$ many $+1$ spins. There exists a constant $C$ such that for any $\beta\ge \beta_0 = \frac{C_0 \log d}{d}$, with probability $1-o(1)$, $G\sim\mathcal G_{d}(n)$ is such that  
    \begin{align*}
        \mathbb P(\cL_{C\log n} \ne \emptyset ) \le  n^{-10}\,.
    \end{align*}
    Moreover, as $d\to\infty$, we could take $C = 1+o_d(1)$ and still get $o(1)$-probability on the right. 
\end{proposition}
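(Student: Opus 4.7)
The plan is to establish exponential decay of $\mathbb{E}[|\cL_t|]$ so long as $|\cL_t| \le \gamma_0 n$, and then invoke Markov's inequality at $t = C\log n$ to conclude extinction. The starting point will be a decomposition of the infinitesimal drift of $|\cL_t|$. Following Observation~\ref{obs:minus-region-evolution} applied to $\cL_{\le t}$ under the grand coupling, a clock ring at $v$ at time $t$ changes $|\cL_t|$ only when either (i) $v\in\cL_{t^-}$ flips from $-1$ to $+1$, at rate $p_+((X_{t^-}(w))_{w\sim v})$, which decreases $|\cL_t|$ by $1$; or (ii) $v \notin \cL_{t^-}$ is adjacent to $\cL_{t^-}$ and flips from $+1$ to $-1$, at rate $1-p_+$, in which case $|\cL_t|$ grows by $1+|\bigcup_{w_i\sim v,\,(w_i,t)\notin \cL_{\le t}} \cR_{t^-}(w_i)|$. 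By Corollary~\ref{cor:domination-by-i.i.d.-Psi} combined with Corollary~\ref{cor:expectation-bound}, each exterior region $\cR_{t^-}(w_i)$ has conditional expected size at most $d^{-100}$, so the expected increase of $|\cL_t|$ per boundary flip is at most $1+d\cdot d^{-100} \le 2$.

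Next, to turn property (B) of Definition~\ref{def:general-ising-process} into a uniform drift bound, I would work on the high-probability event that $G$ satisfies small-set expansion: for every $S\subset V$ with $|S| \le \gamma_0 n$, $|E(S, V\setminus S)| \ge (1-\delta)d|S|$ with $\delta = \delta(\gamma_0)\to 0$ as $\gamma_0 \to 0$. Averaging then shows that at least a $(1-O(\delta))$-fraction of vertices $v\in\cL_t$ have $\ge 4d/7$ neighbors outside $\cL_t$; each such $v$ flips to $+1$ at rate $\ge (1+e^{-2\beta d/7})^{-1}$. Applying the same expansion to $S' = \cL_t \cup B$, where $B = \{w \notin \cL_t : |N(w)\cap \cL_t| \ge 4d/7\}$ is the set of ``bad boundary'' vertices, together with the trivial edge count $|E(S', V\setminus S')| \le d|\cL_t| + (3d/7)|B|$, will force $|B| \le c_1(\gamma_0)|\cL_t|$ with $c_1(\gamma_0) \to 0$ as $\gamma_0 \to 0$. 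To pass from ``$\cL_t$-neighbors'' to ``minus neighbors'', I would additionally condition on the event that the total number of minus vertices outside the legacy is negligible, which holds with probability $1 - O(n^{-9})$ by Markov's inequality applied via the bound $\sum_v \mathbb{P}(|\cR_t(v)| \ge 1) \le n \cdot O(d^{-100})$ from Corollary~\ref{cor:expectation-bound}.

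Assembling these ingredients, the negative drift will be at least $(1-O(\delta))(1-e^{-2\beta d/7})|\cL_t|$, while the positive drift will be at most $2c_1(\gamma_0)|\cL_t|$ (from bad-boundary flips, using the $\le 2$ merge bound) plus $O(d|\cL_t|\cdot e^{-2\beta d/7})$ (from good-boundary flips, whose rates are suppressed by property~(B) since such $w$ have at least $4d/7$ plus neighbors). Choosing $\gamma_0$ small and $\beta \ge C_0\log d/d$ with $C_0$ large, this will give $\frac{d}{dt}\mathbb{E}\big[|\cL_t|\mathbf{1}\{|\cL_t| \le \gamma_0 n\}\big] \le -c\, \mathbb{E}\big[|\cL_t|\mathbf{1}\{|\cL_t|\le \gamma_0 n\}\big]$ for some $c = c(\beta,d)>0$ with $c=1-o_d(1)$ as $d\to\infty$. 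A separate hitting-time bound, running the same drift inequality up to the first exit of $\{|\cL_t|\le\gamma_0 n\}$, will show that $|\cL_t|$ does not cross $\gamma_0 n$ for $t \le e^{n/C}$ with high probability. Integrating then gives $\mathbb{E}[|\cL_t|] \le e^{-ct}\gamma_0 n$, so taking $t = C\log n$ with $C \ge 12/c$ and applying Markov on the non-negative integer $|\cL_t|$ yields $\mathbb{P}(\cL_t \ne \emptyset) \le n^{-10}$.

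The main obstacle will be the quantitative isoperimetric step: bounding $|B|$ by a small constant multiple of $|\cL_t|$, so that the positive contribution from bad-boundary flips (whose rates cannot be suppressed below $1$) is dominated by the negative drift from interior flips, \emph{uniformly in} $\beta \ge \beta_0$. Maintaining the approximation between ``$\cL_t$-neighbors'' and ``minus neighbors'' throughout the trajectory, rather than at a single time, is the technical crux, and will rely on iterating Corollary~\ref{cor:expectation-bound} along a fine time-mesh, similar in spirit to the argument in Corollary~\ref{cor:tau-is-polynomially-large}.
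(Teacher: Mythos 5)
Your overall architecture (drift decomposition into interior flips, bad boundary, and good boundary; domination of exterior minus regions by the rigid dynamics via Corollaries~\ref{cor:domination-by-i.i.d.-Psi} and~\ref{cor:expectation-bound}; Gronwall plus Markov at $t=C\log n$) matches the paper's proof, and your alternative isoperimetric bookkeeping via edge expansion of $S'=\cL_t\cup B$ is a perfectly serviceable substitute for the paper's spectral (Lemma~\ref{lem:expansion-edgenumber-facts:alternate}) and configuration-model (Lemma~\ref{lem:expansion-edgenumber-facts-configuration-model}) estimates. However, there is a genuine gap in the step where you try to pass from ``$\cL_t$-neighbors'' to ``minus neighbors.''

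The claim that ``the total number of minus vertices outside the legacy is negligible, which holds with probability $1-O(n^{-9})$ by Markov's inequality'' does not follow from $\sum_v\mathbb{P}(|\cR_t(v)|\ge 1)\le n\,d^{-100}$. For fixed $d$, that bound is $\Theta(n)$, so Markov gives no useful high-probability statement about the count of outside-legacy minus vertices at a single time, let alone uniformly along the trajectory. Indeed there will generically be order $n$ many small stray minus regions alive at any given time. Moreover, even if you could condition on such a global event, conditioning on a non-monotone global count would sever the stochastic-domination bridge (Lemma~\ref{lem:spacetime-cluster-increasing-on-exterior}, Corollary~\ref{cor:domination-by-i.i.d.-Psi}) that you are relying on everywhere else to bound merge sizes. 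The fix the paper uses is strictly local and requires no extra conditioning: for each good-boundary vertex $v$ (i.e.\ with $\deg_{\cL_{t^-}}(v)\le 3d/7$), one bounds the \emph{conditional} flip rate directly by
\[
\mathbb{E}[c_{v,t}\mid \cE_{t^-,L}]\ \le\ \mathbb{P}\Big(\big|\bigcup_{w\sim v,\,w\notin L_{t^-}}\cR_{t^-}(w)\big|\ge 1\ \big|\ \cE_{t^-,L}\Big)\ +\ e^{-2\beta d/7}\ \le\ d^{-99}+e^{-2\beta d/7},
\]
using Corollary~\ref{cor:domination-by-i.i.d.-Psi} to drop the conditioning per vertex and Corollary~\ref{cor:expectation-bound} for the $d^{-99}$. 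The $d^{-99}$ term absorbs the rare event that a good-boundary vertex has a stray minus neighbor outside $\cL$ (in which case one pays rate $1$), and it must appear in your drift bound; property~(B) alone does not suppress these flips. Your drift inequality should therefore carry an extra additive $O(d\cdot d^{-99})|\cL_t|$ term, which is harmless but cannot be argued away by a global high-probability event.

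A second, smaller point: the ``separate hitting-time bound'' preventing $|\cL_t|$ from exiting $\{|\cL_t|\le\gamma_0 n\}$ for $t\le n^2$ cannot simply be ``running the same drift inequality.'' The drift controls $\mathbb{E}[|\cL_t|]$ but not the supremum of the process; you need a martingale concentration argument (the paper isolates this in Lemma~\ref{lem:retain-large-magnetization}, via the magnetization process and Azuma) or an explicit supermartingale bound. This is fillable, but it is a distinct piece of work that your proposal only gestures at.
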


Unlike the analysis in Section~\ref{sec:minus-regions} for the drift of $\wC_t(u)$, since $\mathcal L_t$ is macroscopic, we cannot rely on the local treelike geometry of $G$ in the drift analysis, and need to instead rely on more global expansion properties of $G$. Specifically we use the following two lemmas. (If we were only interested in $d\ge 20$ the first would be sufficient, while if we were only interested in sufficiently large biases, regardless of the $d\to\infty$ dependence of the bias threshold $\epsilon_0$, the second would be sufficient.) 

    \begin{lemma}\label{lem:expansion-edgenumber-facts:alternate}
    Fix any $d\ge 3$. For any constants $\delta,\eta$ such that $0<\delta<\eta<1/2$, the following holds for $G\sim \mathcal G_d(n)$ with probability $1-o(1)$: for all sets $|S|$ with $|S|\leq \delta n$, 
    \[
    \left|\big\{u\in V:\deg_S(u)\geq \eta d\big\}\right|\leq \frac{4}{d(\eta-\delta)^2}|S|,
    \]
    where $\deg_S(u)$ denotes the number of vertices in $S$ that are adjacent to $u$.
    \end{lemma}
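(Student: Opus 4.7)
The plan is to reduce the claim to the expander mixing lemma, applied with the standard second-eigenvalue bound for random regular graphs. The key random input I will use is that, with probability $1 - o(1)$ as $n\to\infty$, $G \sim \mathcal{G}_d(n)$ satisfies $\lambda(G) < 2\sqrt{d}$, where $\lambda(G)$ denotes the second largest absolute value of an eigenvalue of its adjacency matrix. This is supplied (with room to spare) by Friedman's theorem, which in fact yields $\lambda(G) \le 2\sqrt{d-1} + o(1)$; even a simpler first-moment trace-method bound in the configuration model gives $\lambda(G) \le 2\sqrt{d}(1 + o_n(1))$, which suffices after a negligible loss in the final constant. I will condition throughout on this high-probability event.

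On this event, the expander mixing lemma (valid for any sets $A, B \subseteq V$, possibly overlapping, with edges inside $A \cap B$ simply contributing twice to the left-hand sum) asserts
\[
\Bigl| \sum_{u \in A,\, v \in B} \mathbf{1}\{u \sim v\} \;-\; \frac{|A||B|d}{n} \Bigr| \;\le\; \lambda(G) \sqrt{|A||B|}.
\]
I apply this with $B = S$ and $A = T^* := \{u \in V : \deg_S(u) \ge \eta d\}$. The defining property of $T^*$ gives the lower bound $\sum_{u \in T^*} \deg_S(u) \ge |T^*|\, \eta d$ on the left-hand sum, while the expander mixing lemma provides the upper bound $|T^*||S|d/n + \lambda(G)\sqrt{|T^*||S|}$. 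Using $|S| \le \delta n$ to subtract off the main term, and then dividing by $d\sqrt{|T^*|}$ (assuming $|T^*|>0$; otherwise the conclusion is trivial), I get $(\eta - \delta)\sqrt{|T^*|} \le (\lambda(G)/d)\sqrt{|S|}$. Squaring and plugging in $\lambda(G)^2 < 4d$ then produces
\[
|T^*| \;\le\; \frac{\lambda(G)^2 \, |S|}{d^2(\eta-\delta)^2} \;<\; \frac{4\,|S|}{d(\eta-\delta)^2},
\]
as claimed.

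The only genuinely random step in this plan is the spectral bound $\lambda(G) < 2\sqrt{d}$, which I regard as the main potential obstacle---though in practice no real obstacle at all, being a classical result on random $d$-regular graphs that can simply be cited. Once it is in hand, the rest is a short deterministic argument (one line of expander mixing plus a quadratic rearrangement), and there is nothing tricky about the possible overlap $T^* \cap S$ since the version of expander mixing used here applies to overlapping sets without modification.
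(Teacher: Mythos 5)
Your proof is correct and uses essentially the same idea as the paper: Friedman's second-eigenvalue bound for random $d$-regular graphs is the probabilistic input, and the rest is a short deterministic spectral computation that yields the same constant $4/(d(\eta-\delta)^2)$. The only cosmetic difference is that you package the spectral bound via the expander mixing lemma (testing $A - \tfrac{d}{n}\mathbf{1}\mathbf{1}^{\mathsf T}$ against $\mathbf{1}_{T^*}$ and $\mathbf{1}_S$ with Cauchy--Schwarz), whereas the paper applies the operator-norm bound directly to $\mathbf{1}_S$ and compares $\|(\tfrac1d A - \tfrac1n\mathbf 1\mathbf 1^{\mathsf T})\mathbf 1_S\|_2^2$ termwise over $T^*$; both routes are equivalent and equally tight here.
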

    \begin{proof}
        Let $A\in \R^{n\times n}$ be the adjacency matrix of $G$. The maximum eigenvalue of $A$ is $d$ where the corresponding eigenvector is $\bone\in \R^n$, the all-ones vector. Moreover, by Friedman's second eigenvalue theorem~\cite{Friedman-second-eigenvalue}, for any fixed $\epsilon>0$, we have with probability $1-o(1)$, if $G\sim \mathcal G_d(n)$, 
        \[
        \Big\|\frac{1}{d}A-\frac{1}{n}\bone\bone^{\sT}\Big\|_{\op}\leq \frac{2\sqrt{d-1}+\eps}{d}\leq \frac{2}{\sqrt{d}},
        \]
        where the last inequality holds for small enough $\eps>0$. Consequently, on the high-probability event for $G$ where the above inequality holds, the following holds. For $S\subset V$, let $\bone_S\in \R^n$ denotes the vector with entries $(\bone_S)_i=\one\{i\in S\}$. Then, for any $S\subset V$, 
        \[
         \Big\|\Big(\frac{1}{d}A-\frac{1}{n}\bone\bone^{\sT}\Big)\bone_{S}\Big\|_2^2\leq \frac{4}{d}|S|.
        \]
        At the same time, 
        for any $S$ such that $|S|\leq \delta n$,
        \[
          \Big\|\Big(\frac{1}{d}A-\frac{1}{n}\bone\bone^{\sT}\Big)\bone_{S}\Big\|_2^2
          =\sum_{u\in V}\Big(\frac{\deg_S(u)}{d}-\frac{|S|}{n}\Big)^2\geq (\eta-\delta)^2|\{u\in V:\deg_S(u)\geq \eta d\}|\,.
        \]
        The desired claim follows by combining these last two inequalities. 
    \end{proof}

    \begin{lemma}\label{lem:expansion-edgenumber-facts-configuration-model}
        Suppose $d\ge 7$. With probability $1-o(1)$, $G\sim \cG_{d}(n)$ is such that for all $S$ with $|S|\le 10\gamma_0 n$, we have 
        \begin{align*}
            |\{v: \deg_S(v) > 3d/7\}|\le \frac{|S|}{3}\,.
        \end{align*}
    \end{lemma}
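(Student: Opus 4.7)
The plan is to work in the configuration model $\mathrm{CM}(d,n)$, which is contiguous with $\mathcal G_d(n)$ for $d\ge 3$, and show that the property fails there with probability $o(1)$. Set $k := \lfloor 3d/7 \rfloor + 1$, so $\deg_S(v) > 3d/7$ is equivalent to $\deg_S(v) \ge k$, and take $t := \lceil s/3 \rceil + 1$. First I would dispose of the trivially impossible cases $s < k$ (for which $\deg_S(v) \le |S| < k$ for every $v$), then for each $s \in \{k, \ldots, \lfloor 10\gamma_0 n \rfloor\}$ union-bound over pairs $(S, T)$ of subsets of $V$ with $|S| = s, |T| = t$, bounding the probability that every vertex of $T$ has at least $k$ neighbors in $S$.

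For a fixed pair $(S,T)$ in the configuration model, I would bound this probability by selecting, for each $v \in T$, which $k$ of its $d$ half-edges are matched into $S$, and then pairing the remaining half-edges freely:
\[
\mathbb P\big(\forall v \in T,\, \deg_S(v) \ge k\big) \le \binom{d}{k}^{t}\binom{ds}{kt}(kt)!\prod_{i=0}^{kt-1}\frac{1}{dn - 2i - 1} \le \binom{d}{k}^{t}\Big(\frac{2s}{n}\Big)^{kt}.
\]
Combined with $\binom{n}{s}\binom{n}{t} \le (en/s)^{s}(3en/s)^{s/3+O(1)}$ and writing $c = s/n$, the per-$s$ summand simplifies to
$\big[e^{4/3}\cdot 3^{1/3}\cdot \binom{d}{k}^{1/3}\cdot 2^{k/3}\cdot c^{(k-4)/3}\big]^{s}$. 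For $d \ge 10$ we have $k \ge 5$, so the exponent of $c$ is strictly positive, and picking $\gamma_0$ small renders the sum over $s$ manifestly $o(1)$ by a routine geometric-series calculation.

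The main technical obstacle is the range $d \in \{7,8,9\}$, where $k = 4$ and the exponent of $c$ collapses to zero, so the first-moment bound degenerates to $C^{s}$ for a constant $C > 1$ (numerically $C \approx 45$ at $d=7$), insufficient on its own. To close this regime I would split the analysis at a polylogarithmic threshold $s_\star = \log^{C_1} n$. For $s \le s_\star$, invoking the $1$-locally-treelike property (Fact~\ref{fact:G_d(n)-1-treelike}) that every $R$-ball contains at most one cycle, a direct enumeration of $K_{t,4}$-type configurations in these locally treelike neighborhoods yields a probability polynomially small in $n$ per pair $(S,T)$, which survives the union bound. For $s > s_\star$, I would use concentration of $|T(S)| := |\{v : \deg_S(v) \ge k\}|$ around its expectation $\binom{d}{k}s^{k}/n^{k-1}$ via a vertex-exposure martingale in the configuration model; each half-edge exposure changes $|T(S)|$ by $O(d)$, so Azuma's inequality yields Gaussian tails that dominate the $\binom{n}{s}$ entropy once $s \gg \sqrt{n\log n}$. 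Bridging the intermediate regime $s_\star \le s \le \sqrt{n\log n}$ is the most delicate step and likely requires a polynomial-concentration inequality (Kim--Vu or self-bounding Talagrand) applied to the quartic statistic $|T(S)|$ to capture the genuinely sub-exponential tail needed to beat $\binom{n}{s}$ in this window.
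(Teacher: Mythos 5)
Your first-moment union bound over $(S,T)$ pairs is, up to $O(s)$ terms in the exponent, the same estimate the paper uses (the paper unions over $S$ alone and applies a relative-entropy Chernoff bound to the binomially dominated count of bad vertices; after expanding $\binom{n}{t}q^t$ and $\exp(-nH(t/n\|q))$ they agree to leading order). Your identification of the degeneracy at $d\in\{7,8,9\}$, where $k=\lfloor 3d/7\rfloor+1=4$ and the exponent $(k-4)/3$ of $c=s/n$ vanishes, is correct and the difficulty is real: the threshold constant $\tfrac13$ sits exactly at the critical value $\tfrac{1}{k-1}$, so the per-$s$ first-moment bound collapses to $(\mathrm{const}>1)^s$ uniformly in $\gamma$, and the sum over macroscopic $s$ diverges. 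Indeed, looking closely at the paper's own calculation, two inequalities appear to be overclaimed: $\binom{d}{rd}\le e^{rd\log(1/r)}$ (the standard bound $\binom{d}{k}\ge(d/k)^k$ gives the \emph{reverse}), and $H(\gamma/3\,\|\,p)\ge\frac{\gamma}{3}\log\bigl[3\gamma^{-rd-1}\cdots\bigr]$, which exceeds a direct expansion $H(\gamma/3\,\|\,p)\approx\frac{\gamma}{3}\bigl[(rd-1)\log(1/\gamma)-O(1)\bigr]$ by roughly $\tfrac{2\gamma}{3}\log(1/\gamma)$ — exactly the slack needed to beat the $\binom{n}{\gamma n}\asymp e^{\gamma n(\log(1/\gamma)+1)}$ entropy when $rd=4$. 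So your calculation is not missing anything the paper has; the degeneracy is genuine.

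The repair you sketch, however, does not close the gap. Only the $\le ds$ edges incident to $S$ influence $|\{v:\deg_S(v)\ge k\}|$, so Azuma/Freedman on an edge-exposure filtration gives at best $\exp(-\Theta(\lambda^2/(ds)))=\exp(-\Theta(s/d))$ at $\lambda=s/3$, which loses to $\binom{n}{s}\asymp\exp(s\log(n/s))$ for \emph{every} $s\le n$; there is no range where a martingale variance bound beats the entropy, so the $s\gtrsim\sqrt{n\log n}$ claim is not right. For small $s$, the first moment is $\asymp n^{\,s+t-kt}\cdot(\mathrm{const})^s$ with $t=\lceil s/3\rceil+1$, and $s+t-kt\approx -(k-1)=-3$ is a fixed negative constant, so it is $o(1)$ only for $s\lesssim C\log n$, far short of $\log^{C_1}n$; and the offending $(S,T)$ configurations are spread over the whole graph, so treelikeness of any individual $R$-ball gives no constraint. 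The remedy most consistent with the rest of the paper is to relax the lemma's constant: the downstream drift computations in Lemma~\ref{lem:retain-large-magnetization} and Proposition~\ref{prop:extinction-time-of-legacy} only require $|\{v:\deg_S(v)>3d/7\}|\le c|S|$ for some fixed $c<\tfrac12$, and any $c\in(\tfrac13,\tfrac12)$ makes $c(k-1)-1$ strictly positive at $k=4$, after which the first-moment computation closes for all $d\ge 7$ once $\gamma_0$ is shrunk appropriately (e.g.\ $c=\tfrac{2}{5}$ with $\gamma_0\lesssim 10^{-8}$).
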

    \begin{proof}
    It suffices to establish this for the configuration model, due to the contiguity of $\mathcal G_d(n)$ with the corresponding configuration model. 
        Consider any fixed $S$ with $|S|\le \gamma n$, of which there are $\binom{n}{\gamma n} \le e^{  \gamma n \log (1/\gamma)}$ many. In the configuration model, the number of vertices that are incident to at least $rd > 3d/7$ (for $d=7$, we take $r=4/7$ and for $d \ge 8$ simply take $r=3/7$) vertices of $S$ is dominated by $$\text{Binom}\Big(n,\binom{d}{rd}\Big(\frac{|S|}{n-|S|}\Big)^{rd}\Big) \preceq \text{Binom}\Big(n, e^{ rd\log (1/r)} (\tfrac{\gamma}{1-\gamma})^{rd}\Big)\,,$$
        as even conditional on all other matchings on the half-edges of $S$ this bounds the probability of matching  $rd>3d/7$ of the $d$ half-edges of a vertex $v\notin S$ to half-edges of $S$. 
        
        The expected value of the right-hand side is $n e^{ rd ( \log \frac{1}{r} + \log \frac{\gamma}{1-\gamma})}$ which for $\gamma <1/100$ is at most $n e^{ - 12 d/7}\le n e^{ - 12}$. Therefore, if $H(\cdot \,||\, \cdot)$ denotes the relative entropy, its probability of exceeding $\frac{\gamma n}{3}$ is at most 
        $$\exp\Big(- n H(\frac{\gamma}{3} \,||\, e^{ rd (\log \frac{1}{r} + \log \frac{\gamma}{1-\gamma})})\Big) \le \exp\Big(- \frac{\gamma n}{3} \log \Big[3 \gamma^{- rd-1} e^{-rd (\log (1/r) - \log (1-\gamma))}\Big]\Big)\,.$$
        Now observe that for $r\le 4/7$ and $\gamma \le 1/100$, we have $\log (1/r) - \log (1-\gamma)\le 1 \le \log 3$ and so we get that the negative exponent is at least $\frac{\gamma n}{3} rd \log (1/\gamma)$. 
        The number of such $S$ we recall is at most $e^{ \gamma n \log(1/\gamma)}$. Since $\frac{rd}{3} \ge \frac{8}{7}$, this is exponentially small in $\gamma n \log (1/\gamma)$ and can be summed over all $\gamma n \le 10\gamma_0 n \le n/100$ to conclude the proof. 
    \end{proof}

    The last a priori estimate we need before analyzing the drift of the legacy region and concluding the proof of Proposition~\ref{prop:extinction-time-of-legacy} is a lemma that shows that the magnetization remains positive for a long time if it starts positive. Let $\tau^{m^X}_{r}$ be the hitting time of $r$ for the magnetization of the two-spin dynamics $(X_t)_{t\ge 0}$.

    \begin{lemma}\label{lem:retain-large-magnetization}
    Fix $d\ge 7$ and $\beta \ge \beta_0$. Suppose $G$ satisfies the properties of Lemma~\ref{lem:expansion-edgenumber-facts-configuration-model}. Consider any initialization $x_0$ with $m^X(x_0) \ge 1-2\gamma_0$. There exists a constant $C$ such that  
    \begin{align*}
        \mathbb P(\tau^{m^X}_{1-4\gamma_0} \le e^{ n/C}) \le e^{ - n/C}\,.
    \end{align*}
    \end{lemma}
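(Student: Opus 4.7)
The plan is to track the minus set $\mathcal{S}_t := \{v: X_t(v) = -1\}$, using $m^X(X_t) = 1 - 2|\mathcal{S}_t|/n$, so that $\tau^{m^X}_{1-4\gamma_0}$ is the hitting time of $|\mathcal{S}_t| = 2\gamma_0 n$ starting from $|\mathcal{S}_0| \le \gamma_0 n$. I would first establish a strong negative drift on $|\mathcal{S}_t|$ whenever it lies in the window $[\gamma_0 n, 2\gamma_0 n]$, and then convert this into an exponential hitting-time bound by a gambler's-ruin argument applied to the embedded jump chain.

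For the drift, let $\mathcal{T}(\mathcal{S}_t) := \{v: \deg_{\mathcal{S}_t}(v) > 3d/7\}$ and write $\varepsilon := e^{-2\beta d/7}$. Since $|\mathcal{S}_t| \le 2\gamma_0 n \le 10\gamma_0 n$, Lemma~\ref{lem:expansion-edgenumber-facts-configuration-model} gives $|\mathcal{T}(\mathcal{S}_t)| \le |\mathcal{S}_t|/3$. Any $v \notin \mathcal{T}(\mathcal{S}_t)$ has at least $4d/7$ plus neighbors, so property (B) of Definition~\ref{def:general-ising-process} yields $1 - p_+((X_t(w))_{w\sim v}) \le \varepsilon$ and $p_+((X_t(w))_{w\sim v}) \ge 1/(1+\varepsilon)$. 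Summing the instantaneous rates of the two-spin dynamics, the upward jump rate of $|\mathcal{S}_t|$ satisfies $\alpha_-(X_t) \le |\mathcal{T}(\mathcal{S}_t)| + n\varepsilon \le |\mathcal{S}_t|/3 + n\varepsilon$, while the downward jump rate satisfies $\alpha_+(X_t) \ge (|\mathcal{S}_t| - |\mathcal{T}(\mathcal{S}_t)|)/(1+\varepsilon) \ge (2|\mathcal{S}_t|/3)(1-\varepsilon)$. Choosing $C_0$ large enough that $\varepsilon \le \gamma_0/100$, these bounds give $\alpha_+(X_t) \ge (3/2)\alpha_-(X_t)$ whenever $|\mathcal{S}_t| \in [\gamma_0 n, 2\gamma_0 n]$, so that each jump of the embedded chain $J_k := |\mathcal{S}_{t_k}|$ is upward with conditional probability at most $p = 2/5$, regardless of the full configuration $X_{t_k^-}$.

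For the excursion argument, define the stopping times $\sigma_1 < \sigma_2 < \ldots$ at which $|\mathcal{S}_t|$ enters $\{\gamma_0 n\}$ from $\{\gamma_0 n - 1\}$; on the event $\{\tau^{m^X}_{1-4\gamma_0} < \infty\}$ some such excursion must reach $2\gamma_0 n$. Since $p \le 2/5$ along the embedded chain throughout any excursion in $[\gamma_0 n, 2\gamma_0 n]$, a direct computation shows $M_k := (3/2)^{J_k - \gamma_0 n}$ is a supermartingale when the excursion is stopped at the exit time of $[\gamma_0 n - 1, 2\gamma_0 n]$. Optional stopping then bounds by $(2/3)^{\gamma_0 n}$ the probability that any one excursion reaches $2\gamma_0 n$ before returning to $\gamma_0 n - 1$.

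Finally, the number of excursions in $[0, T]$ is bounded by the total number of clock rings of $\mathcal{T}$, which is Poisson with mean $nT$; standard Chernoff gives $\mathbb{P}(|\mathcal{T} \cap [0,T]| > 2nT) \le e^{-nT/3}$. Union bounding over excursions yields $\mathbb{P}(\tau^{m^X}_{1-4\gamma_0} \le e^{n/C}) \le 2ne^{n/C}(2/3)^{\gamma_0 n} + e^{-ne^{n/C}/3}$, which is at most $e^{-n/C}$ once $C$ is chosen large enough in terms of $\gamma_0$. The main subtlety is that the flip rates depend on the full $X_t$ and not just $|\mathcal{S}_t|$, so the drift inequality must be obtained pointwise rather than only in expectation; this is precisely what the combination of the expansion input from Lemma~\ref{lem:expansion-edgenumber-facts-configuration-model} with the pointwise lower bound in property (B) of the two-spin dynamics provides, and once this pointwise comparison is in place the rest of the argument is classical.
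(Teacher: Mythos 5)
Your proof is correct and takes a genuinely different route from the paper. The paper computes the conditional drift $\frac{d}{dt}\mathbb{E}[m_t^X\mid\mathcal{F}_{t^-}]$, isolates the Doob martingale part $N_t$, controls it via its square bracket ($[N]_t\le t/n$) and Azuma's inequality, applies Gronwall to compare $m_t^X$ to the deterministic ODE trajectory, and then repeats the bound over $e^{cn/2}$ unit-length intervals. You instead establish a \emph{pointwise} comparison of the up-jump rate $\alpha_-$ and down-jump rate $\alpha_+$ of $|\mathcal S_t|$ using the same expansion input (Lemma~\ref{lem:expansion-edgenumber-facts-configuration-model}) and property (B), which yields a uniform bound on the embedded jump chain's upward-step probability, and then you close with a classical supermartingale / gambler's-ruin argument on each excursion into $[\gamma_0 n, 2\gamma_0 n]$, union-bounded over the Poisson clock rings. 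Both derivations rest on the same two ingredients (expansion plus the $4d/7$ plurality lower bound on $p_+$) and both yield $e^{-\Omega(n)}$; yours is more elementary (no Gronwall, no Azuma, no ODE comparison), whereas the paper's drift-plus-martingale machinery is better suited to tracking the magnetization's full trajectory, not just the hitting time. One cosmetic item to tidy up: handle the initial segment before the first excursion starts (if $|\mathcal S_0|<\gamma_0 n$ the first passage to $\gamma_0 n$ is from below, not from $\gamma_0 n - 1$), which is immediate.
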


    This estimate is a much simpler drift analysis than those on the legacy region and spacetime minus clusters, because at low-temperatures, the magnetization at time $t$ has a drift away from $0$ uniformly over $(X_s)_{s<t}$. We therefore defer its proof to Section~\ref{sec:magnetization-analysis} to first present the heart of the remaining proof of the negative drift of the legacy region.

\begin{proof}[\textbf{\emph{Proof of Proposition~\ref{prop:extinction-time-of-legacy}}}]
    We work on the high probability event that $G$ is $1$-locally-treelike and satisfies the properties of Lemma~\ref{lem:expansion-edgenumber-facts:alternate} with $\delta = 10\gamma_0$ and $\eta = \frac{3}{7}$, and of Lemma~\ref{lem:expansion-edgenumber-facts-configuration-model}. 
    
    Initially, $|\cL_0|\le \gamma_0 n$. We compute the drift of $|\cL_t|$ while $t\le n^2$ and $t\le \tau^{\cL}$, where 
    \begin{align*}
        \tau^{\mathcal L}: = \inf \big\{t \ge 0: |\cL_t| \ge 2 \gamma_0 n\big\}\,.
    \end{align*}
    By Lemma~\ref{lem:retain-large-magnetization} and the fact that $m^X(X_t) \le 1-\frac{2|\cL_t|}{n}$ (as legacy vertices must be minus), we have  
    \begin{align}\label{eq:legacy-bad-hitting-times-are-large}       
    \mathbb P(\tau^{\cL} \le n^2) = O(n^{-10})\,,
    \end{align}
    and it will ultimately suffice to just show that the drift on $t\le n^2 \vee \tau^{\cL}$ is negative enough to ensure hitting $\{\cL_t = \emptyset\}$ in time $t= O(\log n)$ with high probability.

    We can write the expected drift for $|\cL_t|$ as follows, recalling the definition of the flip rate $c_{v,t}= c_{v,t}(X_{t^-})= \lim_{\epsilon\downarrow 0} \frac{1}{\epsilon}\mathbb P(X_{t+\epsilon}(v) \ne X_{t^-}(v) \mid \cF_{t^-})$: 
    \begin{align}\label{eq:legacy-region-drift}
        \frac{d}{dt} \mathbb E[|\cL_t|] & =  \mathbb E\Big[\sum_{v} c_{v,t} (|\cL^v_t| - |\cL_{t^-}|)\Big] \nonumber \\ 
        & \le \mathbb E\Big[ \mathbb E\Big[\sum_{v} c_{v,t} (|\cL^v_t| - |\cL_{t^-}|) \mid \cF_{\cL,t^-}\Big] \mathbf 1\{t\le \tau^\cL\}\Big] + n^2 \mathbb P(\tau^\cL \le n^2)\,,
    \end{align}
    where we are using $\cL_t^v
    $ to denote the legacy region after flipping $v$ at time $t$.
    Since there is no forward conditioning in this case, $c_{v,t}$ is simply the probability of a spin flip at $v$ if its clock rings at time $t$ given $X_{t^-}$. Now fix any $t^-$-admissible legacy spacetime cluster $L$ compatible with $\mathbf 1\{t\le \tau^\cL\}$ with slice at $t^-$ denoted $L_{t^-}$, and consider the conditional expectation given $\mathcal E_{t^-,L}$: 
    \begin{align*}
        \mathbb E\Big[ \sum_v c_{v,t} (|\cL_t^v| - |L_{t^-}|) \mid \cE_{t^-,L}\Big]\,.
    \end{align*}
    We split the contribution to the sum into two cases: 
    \begin{enumerate}[label=(\Alph*)]
        \item  $v \in L_{t^-}$: such vertices satisfy $X_{t^-}(v) = -1$ and are flipping to $+1$;
        \item  $v\in \mathsf{Nbd}$ are vertices at distance exactly $1$ from $L_{t^-}$ and necessarily have $X_{t^-}(v) = +1$ and are flipping to $+1$. 
    \end{enumerate}
     Here, the vertices $v$ at distance at least two from $L_{t^-}$ can be neglected since for those vertices, one necessarily has $\cL_t^v = L_{t^-}$. 
    Then, we can clearly write  for $t\le n^2$
    \begin{align*}
        \mathbb E\Big[\sum_{v} c_{v,t} (|\cL^v_t| - |L_{t^-}|) \mid \cE_{t^-,L}\Big] &= \mathbb E\Big[\sum_{v\in L_{t^-}} c_{v,t} (|\cL^v_t| - |L_{t^-}|)\mid \cE_{t^-,L} \Big]  + \mathbb E\Big[\sum_{v\in \mathsf{Nbd}} c_{v,t} (|\cL^v_t| - |L_{t^-}|)\mid \cE_{t^-,L} \Big]\,.
    \end{align*}

    \smallskip
    \noindent (A) 
    If a vertex in $L_{t^-}$ flips at time $t$, necessarily $|\cL_t^v| - |L_{t^-}| =-1$. Among such summands, for any vertex $v\in L_{t^-}$ having $\deg_{L_{t^-}}(v)\le \frac{3d}{7}$, since its other neighbors are $+1$, it has flip rate $c_{v,t} \ge 1-e^{ - 2\beta d/7}$. 
    Since $t\le \tau^\cL$, we get a contribution of at most  
    \begin{align*}
        \mathbb E\Big[\sum_{v\in L_{t^-}} c_{v,t} (|\cL^v_t| - |L_{t^-}|)\mid \cE_{t^-,L} \Big] & \le - (1 - e^{ - 2\beta d/7}) \Big|\Big\{v\in L_{t^-}: \deg_{L_{t^-}}(v) \le \frac{3d}{7}\Big\}\Big| \\ 
        & \le -  (1 - e^{ - 2\beta d/7}) \Big(1- \min \Big\{ \frac{4}{d(\frac{3}{7} - 10\gamma_0)^2} ,  \frac{1}{3}\Big\}\Big) |L_{t^-}|\,,
    \end{align*}
    where we used the bound of Lemma~\ref{lem:expansion-edgenumber-facts-configuration-model} for the $\frac{1}{3}$ bound, and Lemma~\ref{lem:expansion-edgenumber-facts:alternate} to get the large-$d$ behavior. Taking expectation over the realizations of $\{\cL_{<t} = L\}$, and changing the indicator $\mathbf 1\{t\le \tau^\cL\}$ to $1$ paying an additional additive error of $n\mathbb P(\tau^\cL \le n^2)$, the contribution to~\eqref{eq:legacy-region-drift} from $v\in L_{t^-}$ is at most 
    \begin{align*}
       - (1-e^{ - 2\beta d/7})  \Big(1- \min \Big\{ \frac{4}{d(\frac{3}{7} - 10\gamma_0)^2} ,  \frac{1}{3}\Big\}\Big) \mathbb E[|\cL_{t^-} |]  + n \mathbb P(\tau^{\cL} \le n^2)\,.
    \end{align*}
    
    \smallskip
    \noindent (B) 
    If $v\in \mathsf{Nbd}$, then if it does flip from $+1$ to $-1$, the change $|\cL_t^v| - |L_{t^-}|$ is given by $1+ |\bigcup_{w\sim v: w\notin L_{t^-}} \cR_{t^-}(w)|$. Thus the total contribution of this term is split as 
    \begin{align*}
        \mathbb E \Bigg[ & \sum_{\substack{v\in \mathsf{Nbd} \\  \deg_{L_{t^-}}(v) > 3d/7}} c_{v,t} \Big(1 + \Big|\bigcup_{w\sim v: w\notin L_{t^-}} \cR_{t^-}(w)\Big|\Big) \mid \cE_{t^-,L}\Bigg] \\
        & \qquad \qquad \qquad \qquad \qquad +  \mathbb E \Bigg[ \sum_{\substack{v\in \mathsf{Nbd} \\ \deg_{L_{t^-}}(v) \le 3d/7}} c_{v,t} \Big(1 + \Big|\bigcup_{w\sim v: w\notin L_{t^-}} \cR_{t^-}(w)\Big|\Big) \mid \cE_{t^-,L}\Bigg]\,.
    \end{align*}
    
   On $t\le \tau^\cL$, by Lemma~\ref{lem:expansion-edgenumber-facts:alternate} and Lemma~\ref{lem:expansion-edgenumber-facts-configuration-model}, for at most $\min \{ \frac{4}{d(\frac{3}{7} - 10\gamma_0)^2}, \frac{1}{3}\}|L_{t^-}|$ many $v$'s does the vertex have more than $3d/7$ neighbors in $L_{t^-}$. For these, we take the trivial bound of $1$ on $c_{v,t}$, so that the first expected value above is at most 
   \begin{align*}
       \min \Big\{ \frac{4}{d(\frac{3}{7} - 10\gamma_0)^2}, \frac{1}{3}\Big\}|L_{t^-}| \cdot \max_{v\in \mathsf{Nbd}: \deg_{L_{t^-}}(v) > 3d/7} \mathbb E \Big[ 1+ \Big|\bigcup_{w\sim v: w\notin L_{t^-}} \cR_{t^-}(w)\Big| \mid \cE_{t^-,L}\Big] \,.
   \end{align*}
   By the domination relation of Corollary~\ref{cor:domination-by-i.i.d.-Psi} and linearity of expectation, this is at most  
   \begin{align}\label{eq:contribution-big-Legacy-degree}
       \min \Big\{ \frac{4}{d(\frac{3}{7} - 10\gamma_0)^2}, \frac{1}{3}\Big\}|L_{t^-}|  \Big(1+ d \max_{w} \mathbb E[ \wR_{t^-}(w)] \Big) \le  \min \Big\{ \frac{4}{d(\frac{3}{7} - 10\gamma_0)^2}, \frac{1}{3}\Big\}|L_{t^-}|  \Big(1+ d^{ - 99}\Big)\,,
   \end{align}
   where the latter bound followed from Corollary~\ref{cor:expectation-bound} on the rigid minus region's expected size.  

   The other contribution is at most 
   \begin{align*}
       \sum_{\substack{v\in \mathsf{Nbd} \\ \deg_{L_{t^-}}(v)\le 3d/7}} \Bigg( \mathbb E\big[c_{v,t} \mid \cE_{t^-,L}\big] + \mathbb E\Big[ \big| \bigcup_{w\sim v: w\notin L_{t^-}}\cR_{t^-}(w)\big| \mid \cE_{t^-,L}\Big]\Bigg)\,. 
   \end{align*}
    The latter expectation is at most $d^{ - 99}$ per Corollary~\ref{cor:domination-by-i.i.d.-Psi} and Corollary~\ref{cor:expectation-bound} as it was in the previous bound. 
   For the bound on the conditional expectation of $c_{v,t}$, we use that if $v$ has $\deg_{L_{t^-}}(v)\le 3d/7$, if all the other neighbors of $v$ are $+1$, then it has rate bounded by $e^{ - 2\beta d/7}$, to get 
    \begin{align*}
        \mathbb E[c_{v,t} \mid \cE_{t^-,L}] &\le \mathbb P\Big(|\bigcup_{w\sim v: w\notin \cL_{t^-}} \mathcal R_t(w)| \ge 1 \mid \cE_{t^-,L}\Big) + e^{ - 2 \beta d/7} \\ 
        & \le d \max_{w\sim v: w\notin L_{t^-}} \mathbb P\big( |\mathcal \wR_t(w)|\ge 1 \big) + e^{ - 2\beta d/7} \\ 
        & \le d^{ - 99} + e^{ - 2\beta d/7}
    \end{align*}
    where we again used the domination of Corollary~\ref{cor:domination-by-i.i.d.-Psi} to drop the conditioning and move to the rigid dynamics, and then the bound of Corollary~\ref{cor:expectation-bound} to conclude. 
    
    Combining the above bounds, and then taking the outer expectation in~\eqref{eq:legacy-region-drift} over $L$, while on the terms from $v\in \mathsf{Nbd}$ dropping the indicator $\mathbf 1\{t\le \tau^\cL\}$, we get that for all $t\le n^2$, 
\begin{align*}
    \frac{d}{dt} \mathbb E[|\cL_{t}|] & \le - (1 - e^{ - 2\beta d/7}) \Big(1- \min \Big\{ \frac{4}{d(\frac{3}{7} - 10\gamma_0)^2} ,  \frac{1}{3}\Big\}\Big) \mathbb E[|\cL_{t^-}|]  \\
    & \qquad + \min \Big\{ \frac{4}{d(\frac{3}{7} - 10\gamma_0)^2}, \frac{1}{3}\Big\}\mathbb E[|\cL_{t^-}|]  \big(1+ d^{ - 99}\big) + d \mathbb E[|\cL_{t^-}|] (d^{-99} + e^{ - 2\beta d/7}) \\
    & \qquad + 2n^2 \mathbb P(\tau^\cL \le n^2)\,.
\end{align*}  
By~\eqref{eq:legacy-bad-hitting-times-are-large} the last term is $O(n^{ - 8})$. To combine the first three terms, we use that for all $d$, $\min \{ \frac{4}{d(\frac{3}{7} - 10\gamma_0)^2}, \frac{1}{3}\} \le 1/3$  and if $\beta = \frac{C_0 \log d}{d}$ for a large universal constant $C_0$, then $e^{ - 2\beta d/7} \le d^{ - 10}$. As a result, for all $d\ge 7$, for all $t\le n^2$, 
\begin{align*}
    \frac{d}{dt} \mathbb E[|\cL_t|] &  \le  \mathbb E[|\cL_{t^-}|]\Big(- \frac{2}{3} +  \frac{1}{3} + 4d^{ - 8}\Big) + O(n^{-8}) \\ 
    & \le - \frac{1}{4} \mathbb E[|\cL_{t^-}|]\,,
\end{align*}
(and at $d$ large, $\frac{1}{4}$ can be replaced by $1-o_d(1)$). 

    By Gronwall's inequality, for $t\le n^2$, 
    \begin{align*}
        \mathbb E[|\cL_t|] \le |\cL_0| e^{ - t/4}\,.
    \end{align*}
    By Markov's inequality and the trivial bound $|\cL_0| \le n$, we therefore get for $t > 40\log n$, 
    \begin{align*}
        \mathbb P(|\cL_t| \ne 0) \le  n^{-10}\,,
    \end{align*}
    as claimed. Notice that for $d$ large, $t= (1+ o_d(1))\log n$ time would suffice to get $o(1)$-probability of the legacy surviving to time $t$. 
\end{proof}

\subsection{Retaining proximity to the metastable measure}

We now conclude the proof of Theorem~\ref{thm:from-d-independent-epsilon} showing that after $O(\log n)$ time, from sufficiently biased initializations, the total-variation distance of the Potts Glauber dynamics to the state-$1$ metastable measure, $\pi^1 = \pi(\cdot \mid \Omega^1)$ from~\eqref{eq:Potts-phase-magnetization} is small for exponential time.

\begin{proof}[\textbf{\emph{Proof of Theorem~\ref{thm:from-d-independent-epsilon}}}]
Consider the coupling from Lemma~\ref{lem:disagreement-subset-of-legacy} of Potts and two-spin dynamics chains $(Y_t^{y_0}, Y_t^1, X_t^{x_0})_{t\ge 0}$ with $\betap=2 \beta+\frac{C\log (q-1)}{d}$ and with  $x_0 (v) = +1$ wherever $y_0(v) = 1$ and $x_0(v) = -1$ where $y_0(v) \ne 1$. Under that coupling, by Lemma~\ref{lem:disagreement-subset-of-legacy}, 
\begin{align*}
    \mathbb P(Y_t^{y_0} \ne Y_t^1) = \mathbb P(\cD_t \ne \emptyset) \le \mathbb P(\cL_t \ne \emptyset)\,.
\end{align*}
Since we assumed $m(y_0) \ge 1-\gamma_0$, we have at least $(1-\gamma_0) n$ many $+1$ spins in $x_0$. Thus, 
 Proposition~\ref{prop:extinction-time-of-legacy}  established that with probability $1-o(1)$, $G$ is such that for any initialization $x_0$ satisfying $|\cL_0| \le \gamma_0 n$, we have for all $\beta \ge \beta_0=  \frac{C_0 \log d}{d}$, that the right-hand side above is $O(n^{-10})$ at time $t\geq C \log n$. 
 
We also have the same for $y_0 \sim \pi^1$ from~\eqref{eq:Potts-phase-magnetization}  because (e.g., as a consequence of~\cite{DeMo10} in the Ising case and~\cite{BaDeSl-Potts-treelike,CGGRSV-Metastability-Potts-ferromagnet} in the Potts cases), with probability $1-o(1)$, $G$ is such that for large $C$, there exists $c(C)>0$ such that for $\betap>\frac{C \log q}{d}$, we have 
\begin{align}\label{eq:stationary-magnetization-bottleneck}
    \pi^1(\sigma: m(\sigma) \ge 1- \gamma_0) \ge  1-e^{-cn}\,.
\end{align}
By a triangle inequality with total-variation distance, this implies that with probability $1-o(1)$, $G$ is such that for all $\beta \ge \beta_0 = \frac{C_0 \log d}{d}$, if $t \ge C\log n$
\begin{align}\label{eq:close-to-pi^+-init}
    \|\mathbb P(Y_{t}^{y_0}\in \cdot ) - \mathbb P( Y_{t}^{\pi^1}\in \cdot)\|_\tv   = O(n^{-10})\,.
\end{align}
Now consider a restricted chain $\widehat Y_t^{\pi^1}$ which is initialized from $\pi^1$  and rejects any updates that take it outside $\Omega^1$. On the one hand, this chain is coupled perfectly to $Y_t^{\pi^1}$ until the hitting time $\widehat \tau = \inf\{t: m(\widehat{Y}_t^{\pi^1}) \le \frac{2}{n}\}$. On the other hand, $\pi^1$ is stationary for $\widehat Y_t^{\pi^1}$. Thus, 
\begin{align*}
    \|\mathbb P(Y_t^{\pi^1} \in \cdot) - \pi^1\|_{\tv} \le \mathbb P(\widehat \tau \ge t)\,.
\end{align*} 
Finally, by a union bound, we can bound $\mathbb P(\widehat \tau \ge  t)$, for any $t \in [C\log n, e^{ n/C}]$ by the probability that there are more than $2nt$ clock rings in times $[0,t]$ is at most $n^{-10}$, and the probability that in any one of those, $\widehat Y_t^{\pi^1}$ hits  $m(Y_t^{\pi^1}) \le \frac{2}{n}$ is at most the probability of that under $\pi^1$ which was bounded by~\eqref{eq:stationary-magnetization-bottleneck}. As such, with probability $1-o(1)$, $G$ is such that for every $t\in [C\log n,e^{n/C}]$ for $C$ large enough, one has 
\begin{align*}
    \mathbb P(\widehat \tau \ge t)\le  2nt e^{ - cn} + O(n^{-10})   = O(n^{-10})\,,
\end{align*}
which combined with the preceding two displays and a triangle inequality concludes the proof. 
\end{proof}

\section{Drift for the magnetization process away from zero}\label{sec:magnetization-analysis}

In this section, we first analyze the drift of the magnetization process for the two-spin dynamics $(X_t)_{t\ge 0}$ and show that if it starts sufficiently close to $1$, then it will stay close to $1$ for exponentially long times. This will prove the deferred Lemma~\ref{lem:retain-large-magnetization} which was the last step for the proof of Theorem~\ref{thm:from-d-independent-epsilon} for fast mixing from sufficiently biased initializations. Then using a very similar drift analysis of the magnetization for the Potts Glauber dynamics $(Y_t)_{t\ge 0}$, with different expansion lemmas, will give us that for $d$ sufficiently large, the requisite initial bias can be taken arbitrarily close to $0$ boosting Theorem~\ref{thm:from-d-independent-epsilon} to deduce Theorem~\ref{thm:main-Potts}. 

Note that the arguments in this section are significantly simpler than those in the previous sections, following from a direct drift analysis, and martingale concentration, applied to a magnetization process. They show that the magnetization remains significantly biased for long times, but are not refined enough to show that it converges to the stationary measure's magnetization, let alone show anything about the (quasi-)mixing of the full process.

\subsection{Deferred proof of retaining magnetization bias}
In this subsection, $(X_t)_{t\ge 0}$ is the two-spin dynamics of Definition~\ref{def:general-ising-process} initialized from $x_0$ having $m^X(x_0)  = \frac{1}{n}\sum_v x_0(v) \ge 1-2\gamma_0$. The quantity $c_{v,t}= c_{v,t}(X_{t^-})$ is denoting the flip rate $\lim_{\epsilon \downarrow 0} \frac{1}{\epsilon} \mathbb P(X_{t+\epsilon}(v) \ne X_t(v) \mid \cF_{t^-})$. 

\begin{proof}[\textbf{\emph{Proof of Lemma~\ref{lem:retain-large-magnetization}}}]
Let $\cS_t = \cS(X_t)$ be the set of minus sites in $X_t$ and observe that $m_t^X = 1- \frac{2 |\cS_t|}n$ is the (normalized) magnetization process. We split the contribution to the drift of $|\cS_t|$ according to different sets of vertices. For any set $S$, let $V^g(S)$ (using $g$ to indicate ``good") be the set of vertices in $V$ having at most $3 d/7$ neighbors in $S$. Then, if we write $V^g_{t^-} = V^g(\cS_{t^-})$, we can split 
\begin{align*}
    \frac{d}{dt} \mathbb E[|\cS_t| \mid \cF_{t^-}] & = - \sum_{v\in \cS_{t^-}\cap V^g_{t^-}} c_{v,t} - \sum_{v\in \cS_{t^-} \setminus V^g_{t^-}} c_{v,t}  +\sum_{v\in \cS_{t^-}^{c}\cap V^g_{t^-}} c_{v,t} + \sum_{v\in \cS_{t^-}^c \setminus V^g_{t^-} } c_{v,t}\,.
\end{align*}
We aim to give an upper bound on this quantity, so we can drop the second sum as all rates are non-negative. To bound the first term, we note that any such $v$ is flipping from $-1$ to $+1$ and has at least $\frac{4d}{7}$ many plus neighbors so for $v\in \cS_{t^-} \cap V^g$, by assumption (B) in Definition~\ref{def:general-ising-process}, $c_{v,t}\ge \frac{1}{1+e^{-2\beta d/7}}$. For the third term, these vertices are flipping from $+1$ to $-1$ while having at most $3d/7$ many $-1$ neighbors, so their rate satisfies the bound $c_{v,t} \le e^{ - 2\beta d/7}$. Finally, for the fourth term, we can trivially bound the flip rates by $1$. Combining, we get 
\begin{align}\label{eq:minus-set-drift}
    \frac{d}{dt}  \mathbb E[|\cS_t| \mid \cF_{t^-}] \le - (1+e^{ - 2\beta d/7})^{-1} |\cS_{t^-} \cap V^g_{t^-}| + n e^{ - 2\beta d/7} +  | V \setminus V^g_{t^-}|\,.
\end{align}
We now work on the probability $1-o(1)$ event that our graph $G\sim \mathcal G_d(n)$ satisfies the following: for $\eta = \frac{3d}{7}$, for every $S$ with $|S|\le 10\gamma_0 n$, the number of vertices in $\{v: \deg_S(v) >3d/7\}$ is at most $|S|/3$. 
This event has probability $1-o(1)$ by Lemma~\ref{lem:expansion-edgenumber-facts-configuration-model} for all $d\ge 7$. 

On this event, while $|\cS_{t^-}|\le 10\gamma_0 n$, one has $|\cS_{t^-} \cap V^g_{t^-}|\ge  \frac{2|\cS_{t^-}|}{3}$. At the same time, we also can bound  $|V\setminus V^g_{t^-}|\le  \frac{|\cS_{t^-}|}3$. Plugging these bounds in to~\eqref{eq:minus-set-drift}, we get 
\begin{align*}
    \frac{d}{dt}  \mathbb E[|\cS_t| \mid \cF_{t^-}] \le - (1-e^{ - 2\beta d/7} - \frac{1}{3}) |\cS_{t^-}| + ne^{ - 2\beta d/7}\,.
    \end{align*}
If $\beta>\frac{C_0 \log d}{d}$ for a large $d$-independent constant $C_0$, then while $|\cS_{t^-}| \in [\frac{\gamma_0}{10}, 10 \gamma_0]$, this satisfies $\frac{d}{dt}  \mathbb E[|\cS_t| \mid \cF_{t^-}] \le - \frac{1}{2} |\cS_{t^-}|$. This is equivalent, using $m_t^X = 1- 2 |\cS_{t}^-|/n$, to
\begin{align*}
    \frac{d}{dt} \mathbb E[m_t^X \mid \cF_{t^-}] \ge 1- m_t^X \ge \frac{\gamma_0}{5}\,, \qquad \text{while $t\le \tau^{m^X}_{1-5\gamma_0} \vee \tau^{m^X}_{1-\gamma_0/5}$}\,.
\end{align*}
Now consider the continuous-time Doob martingale 
    \begin{align*}
        N_t := m_t^X - m_0^X - \int_0^t \frac{d}{ds} \mathbb E[m_s^X \mid \mathcal F_{s^-}]ds\,.
    \end{align*}
     This is a jump martingale with jumps that are almost surely bounded by $2/n$ and square bracket $[N]_t \le t/n$ (recall that the square bracket of $N_t$ is the predictable process for which $\partial_t \mathbb E[N_t^2 - [N]_t\mid \cF_{t^-}]= 0$). Thus, by Doob's maximal inequality and Azuma's inequality, 
     \begin{align*}
         \mathbb P \Big (\sup_{t\le T} |N_t| \ge r\Big) \le 2\exp( - r^2 n/8T)\,.
     \end{align*}
     This implies that except with probability $2e^{ - r^2 n/8T}$, 
     \begin{align*}
         m_t^X \ge m_0^X + \int_0^t \frac{d}{ds} \mathbb E[ m_s^X \mid \cF_{s^-}] ds + N_t \ge m_0^X + \int_0^t F(m_s^X) ds - r\,,
     \end{align*}
     where $F(m_s^X)$ is at least $1-m_s^X$ for $m_s^X \in [1-5\gamma_0, 1-\gamma_0/5]$ and is $0$ on $[1-\gamma_0/5, 1]$. 
     If $\bar m_t$ is the process solving the $1$-dimensional ODE, $\bar m_t = m_0 + \int_0^t F(\bar m_s) ds$, then Gronwall's inequality and the fact that $F$ is $1$-Lipschitz implies that except with probability $2e^{- r^2 n/8T}$, one has 
     $$\sup_{t\le T}|m_t^X - \bar m_t| \le r e^{ 2T}\,.$$
     Taking $r = \gamma_0/1000$, we get that with probability $1-e^{ - cn/T}$ for a sufficiently small constant $c$, for all $t\le \tau_{1-5\gamma_0}^{m^X} \vee T$, if $m_0^X \ge 1- 2\gamma_0$, then 
     \begin{align*}
         m_t^X \ge \min \Big\{ m_0^X + \frac{\gamma_0 t}{5}, 1- \frac{\gamma_0}{5}\Big\} - \frac{\gamma_0}{1000}e^{2T} \,.
     \end{align*}
    Taking $T=1$, one gets $\inf_{t\in [0,1]} m_t^X\ge 1-4\gamma_0$ and $m_1^X \ge 1-2\gamma_0$. Repeating this bound $e^{ cn/2}$ many times using a union bound, we deduce that if $m_0^X \ge 1-2\gamma_0$, then $\mathbb P(\tau^{m^X}_{1-4\gamma_0} \le e^{cn/2}) \le e^{ - cn/2}$ as claimed. 
\end{proof}

\subsection{Allowing for small biases when $d$ is large}
We now use the same drift analysis to establish that for every $\betap>\frac{C_0 \log (qd)}{d}$, the Potts dynamics in $O(1)$ time attains $m(Y_t) \ge (1-\gamma_0) n$ even if it is initialized from a configuration with only magnetization $m(y_0)\ge \epsilon_d$ where $m(Y)$ is defined as in~\eqref{eq:Potts-phase-magnetization}. Combined with Theorem~\ref{thm:from-d-independent-epsilon}, this allows us for $d$ large to take a vanishing bias and still quasi-equilibrate to the plus phase.

\begin{lemma}\label{lem:from-d-dependent-initialization}
    For every $d$ large, there exists $\epsilon_0(\beta, d) = O_d(\max\{d^{ - 1/2}, (\beta d)^{-1}\})$ such that if $G \sim \mathcal G_d(n)$, the following holds with probability $1-o(1)$. For every $\beta\ge \frac{C_0 \log (qd)}{d}$, for every $\epsilon>\epsilon_0$ and every $X_0$ with $m(X_0)\ge \epsilon$, 
    \begin{align*}
        \mathbb P(\tau_{1-\gamma_0}^m> 6/\gamma_0) = \exp( - \Omega(n))\,.
    \end{align*}
\end{lemma}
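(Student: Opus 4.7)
The plan is to directly analyze the drift of $|S_t| := n - N_1(t)$, where $N_k(t) := |\{v : Y_t(v) = k\}|$ is the number of vertices in state $k$ at time $t$, mirroring the template of Lemma~\ref{lem:retain-large-magnetization} but in the Potts setting and for potentially large non-plurality populations. The trivial observation that $\max_{k\ne 1}N_k(t) \le |S_t|$ gives $m(Y_t) \ge 1 - 2|S_t|/n$, so it suffices to show that $|S_t|$ drops below $\gamma_0 n/2$ within time $6/\gamma_0$ with probability $1-e^{-\Omega(n)}$.

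The key graph-theoretic input is the two-sided expansion bound obtained from Friedman's theorem by the same operator-norm computation used in Lemma~\ref{lem:expansion-edgenumber-facts:alternate}: with probability $1-o(1)$ over $G\sim \mathcal G_d(n)$, for every $S\subset V$ and every $\rho>0$,
\[
\#\{v\in V:\lvert \deg_S(v)-d|S|/n\rvert > \rho d\} \;\le\; \frac{4|S|}{d\rho^2}.
\]
Applied at time $t$ with $\rho=\epsilon_0/4$ to each class $V_k(t)$ and summed over $k\in[q]$, this produces a bad set $\mathsf{Bad}_t\subset V$ of size at most $4n/(d\epsilon_0^2)$ outside of which every vertex has $|\deg_{V_k(t)}(v)-dN_k(t)/n|\le \rho d$ for every $k$. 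Conditional on the plurality event $\{N_1(t)-N_k(t)\ge \epsilon_0 n~\forall k\ne 1\}$, every good vertex satisfies $\deg_{V_1}(v)-\deg_{V_k}(v)\ge \epsilon_0 d/2$, so the Potts update probability obeys
\[
p_1(v)\;\ge\;\frac{1}{1+(q-1)e^{-\beta\epsilon_0 d/2}}\;\ge\; 1-e^{-1},
\]
provided $\beta\epsilon_0 d\ge 2\log(e(q-1))$. Combining the latter with the requirement $|\mathsf{Bad}_t|\le \gamma_0 n/100$ forces the joint threshold $\epsilon_0 = C\max\{\sqrt{q/d},\,\log q/(\beta d)\}$ with $C$ large, which is exactly the $O_d(\max\{d^{-1/2},(\beta d)^{-1}\})$ promised in the statement.

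With this good/bad split, decomposing the drift of $|S_t|$ into flips out of $V_1$ and into $V_1$ as in Lemma~\ref{lem:retain-large-magnetization} gives, conditional on the plurality event,
\[
\frac{d}{dt}\mathbb{E}[|S_t|\mid \cF_{t^-}]\;\le\; -\tfrac{1}{4}|S_t|+\tfrac{\gamma_0 n}{100},
\]
since good $v\notin V_1$ contribute a negative term of at least $(|S_t|-|\mathsf{Bad}_t|)/2$, good $v\in V_1$ contribute a positive term of at most $n(q-1)e^{-\beta\epsilon_0 d/2}=o(\gamma_0 n)$, and $|\mathsf{Bad}_t|$ is absorbed into the $\gamma_0 n/100$ slack. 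Gronwall's inequality then forces $\mathbb{E}|S_t|$ below $\gamma_0 n/4$ within time $t=4\log(2/\gamma_0)\le 6/\gamma_0$, and Azuma's inequality on the continuous-time Doob martingale of $|S_t|$ (jumps bounded by $1$, square bracket growing at rate $O(n)$ over the $O(1)$-length horizon) promotes this to an $e^{-\Omega(n)}$ statement exactly as in the final paragraph of the proof of Lemma~\ref{lem:retain-large-magnetization}.

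The main obstacle is that the drift analysis is conditional on the plurality event, which must be maintained throughout the $[0,6/\gamma_0]$ window. This is resolved by running the same drift argument symmetrically on each gap $N_1(t)-N_k(t)$ for $k\ne 1$: the analogous computation shows its expected drift is non-negative and its fluctuations are controlled by Azuma. A union bound over $k\in \{2,\ldots,q\}$ and over a polynomial-in-$n$ mesh in $[0,6/\gamma_0]$ then pins the plurality event globally with probability $1-e^{-\Omega(n)}$, closing the bootstrap and completing the proof.
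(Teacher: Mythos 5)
Your proposal is essentially correct, but it follows a genuinely different route from the paper.

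The paper directly tracks the plurality gaps $N_1(t)-N_i(t)$ for each $i\ne 1$ and shows each has a strictly positive drift (at least $\gamma_0/5$ per unit time, after normalizing by $n$) as long as the magnetization stays in $[\epsilon/2, 1-\gamma_0/2]$. This uses a purpose-built $q$-partition expansion estimate (Lemma~\ref{lem:expansion-edgenumber-facts:Potts}) that compares $\deg_{S_1}(v)$ to $\deg_{S_k}(v)$ by bounding $\|(\tfrac{1}{d}A-\tfrac{1}{n}\bone\bone^{\sT})(\bone_{S_k}-\bone_{S_1})\|_2^2$ directly. With the gap increasing linearly, it reaches $(1-\gamma_0/2)n$ within time $O(1/\gamma_0)$, and Azuma plus the ODE comparison from Lemma~\ref{lem:retain-large-magnetization} gives the $e^{-\Omega(n)}$ probability. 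You instead track the total non-plurality mass $|S_t|=n-N_1(t)$ and show it shrinks geometrically, using the more generic two-sided version of the Friedman consequence applied separately to each class $V_k$. The price is that you must then run a second, auxiliary drift argument on the gaps $N_1(t)-N_k(t)$ to maintain the plurality event throughout the window — a bootstrap the paper avoids because it tracks the gap from the outset and that quantity is exactly what needs to reach $(1-\gamma_0)n$. Both arguments buy essentially the same constants: your route uses a more off-the-shelf expansion estimate but pays with a two-stage drift analysis, while the paper's tailored $q$-partition lemma lets it argue in one pass.

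Two small points to tighten. First, the bad-set size from your spectral union bound is $\sum_k \tfrac{4N_k(t)}{d(\epsilon_0/4)^2}=\tfrac{64n}{d\epsilon_0^2}$, not $\tfrac{4n}{d\epsilon_0^2}$; this is absorbed in $O_d(\cdot)$ but worth recording. Second, the Azuma step should be run on the normalized process $|S_t|/n$, whose jumps are $O(1/n)$ and whose square bracket grows at rate $O(1/n)$, so that a constant-scale deviation has probability $e^{-\Omega(n)}$; the un-normalized version with jumps bounded by $1$ does also give $e^{-\Omega(n)}$ once the target deviation is taken proportional to $n$, but the normalized bookkeeping matches Lemma~\ref{lem:retain-large-magnetization} and is cleaner to present.
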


We will use the following $q$-part generalization of the expansion estimate Lemma~\ref{lem:expansion-edgenumber-facts:alternate}. 

    \begin{lemma}\label{lem:expansion-edgenumber-facts:Potts}
    Fix any $d\geq 3, q\geq 2$, and constants $0<\eta<\delta<1$. Then, the following holds for $G\sim \mathcal G_d(n)$ with probability $1-o(1)$: for all $q$-partition $S_1,S_2,\ldots S_q$ of $V$, i.e. $\bigsqcup_{k\in[q]}S_k=V$, such that $|S_1|\geq \max_{2\leq k\leq q} |S_k|+\delta n$, we have
    \[
    \big|\{u\in V: \deg_{S_1}(v)\leq \max_{2\leq k\leq q} \deg_{S_k}(v)+\eta d\}\big|\leq \frac{8(q-1)}{d(\delta-\eta)^2}|S_1|\,,
    \]
    where $\deg_S(v)$ denotes the number of vertices in $S$ that are adjacent to $v$.
    \end{lemma}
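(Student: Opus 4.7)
The plan is to extend the spectral argument from Lemma~\ref{lem:expansion-edgenumber-facts:alternate} by applying it to the signed indicator vectors $\bone_{S_1} - \bone_{S_k}$ for each $k \in \{2,\ldots,q\}$, and then union-bounding over $k$. As in the $q=2$ case, I would work on the high-probability event from Friedman's second eigenvalue theorem that $\|\frac{1}{d}A - \frac{1}{n}\bone\bone^\sT\|_{\op} \le \frac{2}{\sqrt{d}}$, which the paper has already shown holds with probability $1-o(1)$ for $G\sim\mathcal G_d(n)$.

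For each fixed $k \in \{2,\ldots,q\}$ and each $q$-partition as in the statement, using disjointness of $S_1$ and $S_k$ together with $|S_k| \le |S_1|$, I would compute
\[
\Big\|\Big(\tfrac{1}{d}A - \tfrac{1}{n}\bone\bone^\sT\Big)(\bone_{S_1} - \bone_{S_k})\Big\|_2^2 \le \frac{4}{d}\|\bone_{S_1} - \bone_{S_k}\|_2^2 = \frac{4}{d}(|S_1| + |S_k|) \le \frac{8|S_1|}{d}.
\]
Expanding the left-hand side coordinate-wise, this reads
\[
\sum_{v \in V}\Big(\frac{\deg_{S_1}(v) - \deg_{S_k}(v)}{d} - \frac{|S_1| - |S_k|}{n}\Big)^2 \le \frac{8|S_1|}{d}.
\]
For any vertex $v$ satisfying $\deg_{S_1}(v) \le \deg_{S_k}(v) + \eta d$, the hypothesis $|S_1| - |S_k| \ge \delta n$ forces the corresponding summand to be at least $(\delta-\eta)^2$, so the count of such vertices is at most $\frac{8|S_1|}{d(\delta-\eta)^2}$.

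To conclude, observe that if a vertex $v$ satisfies $\deg_{S_1}(v) \le \max_{2 \le k \le q}\deg_{S_k}(v) + \eta d$, then selecting the $k$ that attains this maximum places $v$ into the bad set controlled in the previous paragraph for that particular $k$. A union bound over the $q-1$ choices of $k$ thus yields the claimed estimate $\frac{8(q-1)|S_1|}{d(\delta-\eta)^2}$. I do not anticipate any serious obstacle here: the argument is a direct spectral-expander computation modeled on the $q=2$ case, and Friedman's theorem, which is the only non-elementary input, has already been invoked in the proof of Lemma~\ref{lem:expansion-edgenumber-facts:alternate}.
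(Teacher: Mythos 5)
Your argument is identical to the paper's: both invoke Friedman's second-eigenvalue theorem, apply the operator-norm bound to the difference vectors $\bone_{S_1}-\bone_{S_k}$ for each $k\in\{2,\ldots,q\}$, read off the coordinate-wise lower bound of $(\delta-\eta)^2$ on the contribution of each bad vertex, and finish with a union bound over the $q-1$ indices. The proposal is correct and follows the same route.
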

    \begin{proof}
         Let $A\in \R^{n\times n}$ be the adjacency matrix of $G\sim \mathcal G_d(n)$. As shown in the proof of Lemma~\ref{lem:expansion-edgenumber-facts:alternate}, we have w.h.p. over $G\sim \mathcal G_d(n)$ that $\big\|d^{-1}A-n^{-1}\bone\bone^{\sT}\big\|_{\op}\leq 2d^{-1/2}$. We claim that the desired conclusion holds on this high-probability event. For $S\subset V$, let $\bone_S\in \R^n$ denotes the vector with entries $(\bone_S)_i=\one\{i\in S\}$, and consider $\bigsqcup_{k\in[q]}S_k=V$ such that $|S_1|\geq \max_{2\leq k\leq q}|S_k|+\delta n$. Then, for any $2\leq k \leq q$, we have
        $\big\|\big(d^{-1}A-n^{-1}\bone\bone^{\sT}\big)(\bone_{S_k}-\bone_{S_1})\big\|_2^2\leq 4d^{-1}\big\|\bone_{S_k}-\bone_{S_1}\big\|_2^2\leq 8d^{-1}|S_1|$,
        where the last inequality holds since $|S_k|\leq |S_1|$.
        On the other hand, we have
        \[
        \begin{split}
          \Big\|\Big(\frac{1}{d}A-\frac{1}{n}\bone\bone^{\sT}\Big)(\bone_{S_k}-\bone_{S_1})\Big\|_2^2
          &=\sum_{v\in V}\Big(n^{-1}\big(|S_1|-|S_k|\big)-d^{-1}\big(\deg_{S_1}(v)-\deg_{S_k}(v)\big)\Big)^2\\
          &\geq (\delta-\eta)^2\big|\{v\in V:\deg_{S_1}(v)\leq \deg_{S_k}(v)+\eta d\}\big|\,,
          \end{split}
        \]
        so $\big|\bigcup_{k=2}^{q}\{v\in V:\deg_{S_1}(v)\leq \deg_{S_k}(v)+\eta d\}\big|\leq \frac{8(q-1)}{d(\delta-\eta)^2}|S_1|$ holds by a union bound.
    \end{proof}

\begin{proof}[\textbf{\emph{Proof of Lemma~\ref{lem:from-d-dependent-initialization}}}]
Let $\cS_{i,t} = \cS_i(X_t)$ be the set of sites in $(Y_t)_{t\ge 0}$ in state $i\in \{1,...,q\}$. We bound the drift for $\mathcal S_{1,t}$ and $\mathcal S_{i,t}$ for $i\in \{2,...,q\}$ separately. For an $\eta \in (0,1)$ and $q$-partition $S_1,...,S_q$, let $V^g= V^g(S_1,...,S_q)$ (using $g$ to indicate ``good") be the set of vertices in $V$ with 
\begin{align*}
     \deg_{S_1}(v) \ge \max_{2\le k\le q}\deg_{S_k}(v) + \eta d\,.
\end{align*}
Then, for $i=1,..,q$, if we write $V^g_{t^-} = V^g(\cS_{1,t^-},...,\cS_{q,t^-})$, we can split 
\begin{align*}
    \frac{d}{dt} \mathbb E \big[ |\cS_{i,t}| \mid \cF_{t^-} \big] = - \sum_{v\in \cS_{i,t^-}\cap V^g_{t^-}} c_{v,t} - \sum_{v\in \cS_{i,t^-} \setminus V^g_{t^-}} c_{v,t} + \sum_{v\in \cS_{i,t^-}^c \cap V^g_{t^-}} c_{v,t}^i + \sum_{v\in \cS_{i,t^-}^c \setminus V^g_{t^-}} c_{v,t}^i \,,
\end{align*}
where for a vertex that is in state $i$ in $Y_{t^-}$, we are using $c_{v,t} = \lim_{\epsilon \downarrow 0} \frac{1}{\epsilon}\mathbb P(Y_{t+\epsilon}(v) \ne Y_{t}(v) \mid \cF_{t^-})$ (where abusing notation now $\cF_{t^-}$ is the filtration generated by the Potts Glauber dynamics chain), and for vertices that are not in state $i$ in $Y_{t^-}$, we are using $c_{v,t}^i$ to denote its flip rate into state $i$. 

We aim to lower bound the above drift for $i=1$ and upper bound it for $i\ne 1$. For the lower bound for $i=1$, if $v\in \cS_{1,t} \cap V^g_{t^-}$, we have $c_{v,t} \le (q-1)/(q-1+e^{\betap \eta d}) \le (q-1)e^{ - \betap \eta d}$, and otherwise $c_{v,t}\le 1$ trivially. On the other hand, for the contributions of the positive terms, they have $c_{v,t}^i \ge 1-(q-1) e^{ - \betap \eta d}$ if $v\in \cS_{i,t^-}^c\cap V^g_{t^-}$, and otherwise $c_{v,t}^i\ge 0$ trivially. 
Thus, 
\begin{align*}
    \frac{d}{dt} \mathbb E[|\cS_{1,t}| \cF_{t^-}] \ge  - (q-1)e^{ - \betap \eta d} |\cS_{1,t^-}|   - |V\setminus V^g_{t^-}| + \big(1 - (q-1)e^{ - \betap \eta d}\big) |V^g_{t^-} \setminus \cS_{1,t^-}|\,.
\end{align*}
By analagous bounds for $i = \{2,...,q\}$ one has 
\begin{align*}
    \frac{d}{dt} \mathbb E[ |\cS_{i,t}|\mid \cF_{t^-}] \le - \big( 1-(q-1)e^{ - \betap \eta d}\big)|\cS_{i,t^-} \cap V^g_{t^-}| + |V^g_{t^-}| (q-1) e^{ - \betap \eta d} + |V\setminus V^g_{t^-}|\,.
\end{align*}
Taking the difference of these, we get for every $i\ne 1$, that 
\begin{align}\label{eq:drift-difference-of-state-1-and-other-state}
    \frac{d}{dt} \mathbb E \big[|\cS_{1,t}| - |\cS_{i,t}| \mid \cF_{t^-}]  & \ge \big( 1- (q-1)e^{ - \betap \eta d}\big) \big(  n - (|\cS_{1,t^-}| - |\cS_{i,t^-}|) - 2|V\setminus V^g_{t^-}|  \big) \nonumber \\
    & \qquad - (q-1) e^{ - \betap \eta d} (|\cS_{1,t^-}|  + |V^g_{t^-}|)  - 2|V \setminus V^g_{t^-}|\,.
\end{align}
    We work on the event that $G\sim \mathcal{G}_d(n)$ has for every $q$-partition $S_1,...,S_q$ with $|S_1| \ge \max_{2\le i\le q} |S_i| + \delta n$ for $\delta = \frac{\epsilon}{2}$, for every $\eta \in (0,\delta)$, the number of bad vertices 
    \begin{align*}
        V \setminus V^g(S_1,...,S_q) = \{ v\in V: \deg_{S_1}(v) \le \max_{i\ne 1} \deg_{S_i}(v) + \eta d\}\,,
    \end{align*}
    has size at most $\frac{8 (q-1)}{d(\delta - \eta)^2}|S_1|$.  For every $d\ge 3$, $q\ge 2$, this event holds with probability $1-o(1)$ for $G\sim \mathcal G_{d}(n)$ by Lemma~\ref{lem:expansion-edgenumber-facts:Potts}.
    With these assumptions, we have $|V\setminus V_{t^-}^g| \le \frac{8(q-1)}{d(\delta - \eta)^2} |\cS_{1,t^-}|$. Therefore, 
    \begin{align*}
        \frac{d}{dt} \mathbb E[|\cS_{1,t}| - |\cS_{i,t}| \mid \cF_{t^-}] &  \ge (n - (|\cS_{1,t^-}| - |\cS_{i,t^-}|)) - 4n(q-1)e^{- \betap \eta d} - \frac{16(q-1)}{d (\delta - \eta)^2}|\cS_{1,t^-}| \\ 
        & \ge \Big(1-\frac{16(q-1)}{d (\delta - \eta)^2}\Big)n - (|\cS_{1,t^-}| - |\cS_{i,t^-}|)) - 4n(q-1)e^{- \betap \eta d}
    \end{align*}
    Then, if we let $\eta = \frac{\epsilon}{4}$, and if $t\le \tau_{\frac{\epsilon}{2}}^m \vee \tau_{1-\frac{\gamma_0}{2}}^m$ where we recall that $m_t = \frac{1}{n}(|\cS_{1,t}| - \max_{i= 2,...,q} |\cS_{i,t}|)$, so that $|\cS_{1,t^-}| - |\cS_{i,t^-}| \in  [\frac{\epsilon}{2} n  , (1-\frac{\gamma_0}{4}) n]$, then for each $i=2,...,q$,
    \begin{align*}
        \frac{d}{dt} \mathbb E[\frac{1}{n}(|\cS_{1,t}| -  |\cS_{i,t}| )\mid \cF_{t^-}] \ge  \frac{\gamma_0}{4} -  \frac{256(q-1)}{\epsilon^2 d} - 4 (q-1)e^{ - \betap \epsilon d/4}\,.
    \end{align*}
    If we have $\epsilon  > C_0 d^{-1/2}$ and also have that $\betap \epsilon d > C_0 \log q$ where $C_0$ is a sufficiently large universal constant, then the right-hand side above is at least $\frac{\gamma_0}{5}$, say. 

        Controlling the contribution of the martingale part for each $i=2,...,q$ exactly as in the end of the proof of Lemma~\ref{lem:retain-large-magnetization}, we get that except with probability $e^{ - cn}$ for a sufficiently small constant $c$, after time $T = O(1)$ (e.g., $T= 6\gamma_0^{-1}$ suffices), the magnetization $m_t$ will have hit $\tau_{1-\frac{\gamma_0}{2}}^{m}$. 
\end{proof}

\begin{proof}[\textbf{\emph{Proof of Theorem~\ref{thm:main-Potts}}}]
    Fix any $d\ge 7$, any $\beta > \frac{C_0 \log d}{d}$, and $\betap = 2\beta + \frac{C_0 \log q}{d}$ for $C_0$ a sufficiently large $d$-independent constant. 
    Consider any initialization with $m(Y_0) \ge \epsilon_0$ where $$\epsilon_0(\beta, d) = \min\Big\{1-\gamma_0, \max\{\tfrac{C_0}{\sqrt{d}}, \tfrac{C_0}{\beta d}\}\Big\}\,.$$ Work on the $1-o(1)$ probability event for $G$ that it is $1$-locally-treelike and all expansion properties used in the previous theorems apply.  
    By Lemma~\ref{lem:from-d-dependent-initialization}, the hitting time $\tau_{1-\gamma_0}^{m}$ is $O(1)$ with probability $1-e^{ - \Omega(n)}$. By the strong Markov property, at $\tau_{1-\gamma_0}^m$ we can apply Theorem~\ref{thm:from-d-independent-epsilon} to conclude. 
\end{proof}

\bibliographystyle{plain}
\bibliography{references}

\end{document}